\tikzset{->-/.style={decoration={markings, mark=at position 0.5 with {\draw (-2pt,-4pt)--(2pt,0)--(-2pt,4pt);}}, postaction={decorate}}}
\tikzset{-<-/.style={decoration={markings, mark=at position 0.5 with {\draw (2pt,-4pt)--(-2pt,0)--(2pt,4pt);}}, postaction={decorate}}}
\tikzstyle{dot}=[circle, draw, thick, fill=white]
\tikzstyle{line}=[thick, shorten >=-2pt, shorten <=-2pt]
\tikzstyle{doubleline}=[thick, double distance=2pt, shorten >=-2pt, shorten <=-2pt]
\tikzstyle{tripleline}=[thick, double distance=4pt, shorten >=-2pt, shorten <=-2pt]
\newtheoremstyle{italics-style}{\parskip}{0}{\itshape}{}{\bfseries}{.}{ }{\thmname{#1}\thmnumber{ #2}\thmnote{ (#3)}}
\newtheoremstyle{normal-style}{\parskip}{0}{}{}{\bfseries}{.}{ }{\thmname{#1}\thmnumber{ #2}\thmnote{ (#3)}}
\newtheoremstyle{named}
{\parskip}
{0}
{\itshape}
{}
{\bfseries}
{.}
{ }
{\thmname{#3}}
\theoremstyle{named}
\newtheorem*{namedtheorem*}{Theorem}
\theoremstyle{italics-style}
\newtheorem{maintheorem}{Theorem}
\newtheorem{theorem}{Theorem}[section]
\newtheorem{lemma}{Lemma}[section]
\newtheorem{proposition}{Proposition}[section]
\newtheorem{definition}{Definition}[section]
\theoremstyle{normal-style}
\newtheorem{remark}{Remark}[section]
\newtheorem{example}{Example}[section]
\newcommand{\nn}{\nonumber}
\newcommand{\rats}{\mathbb{Q}}
\newcommand{\ads}{\mathbb{A}}
\newcommand{\cx}{\mathbb{C}}
\DeclareMathAlphabet{\mathbbold}{U}{bbold}{m}{n}
\newcommand{\id}{\mathbbold{1}}
\newcommand{\lie}[1]{{\mathfrak{#1}}}
\newcommand{\beq}{\begin{equation}}
\newcommand{\eeq}{\end{equation}}
\newcommand{\beqa}{\begin{eqnarray}}
\newcommand{\eqa}{\end{eqnarray}}
\newcommand{\M}{M}
\newcommand{\bs}{\backslash}
\NewDocumentCommand{\intl}{o}{
    \IfNoValueTF{#1}{\int\limits}{\int\limits_{\mathclap{#1}}}
}
\NewDocumentCommand{\suml}{o}{
    \IfNoValueTF{#1}{\sum\limits}{\sum\limits_{\mathclap{#1}}}
}
\newcommand{\Oh}{\mathcal{O}}
\DeclareMathOperator{\tr}{tr}
\DeclareMathOperator{\rank}{rank}
\DeclareMathOperator{\GKdim}{GKdim}
\DeclareMathOperator{\Ind}{Ind}
\DeclareMathOperator{\diag}{diag}
\DeclareMathOperator{\e}{\mathbf{e}}
\numberwithin{equation}{section}
\begin{document}

\thispagestyle{empty}

{\flushright{\tt{AEI-2014-064}\\}}
\vspace{10mm}

\begin{center}
{\LARGE \bf Small automorphic representations\\[2mm] and degenerate Whittaker vectors}\\[10mm]

\vspace{8mm}
\normalsize
{\large Henrik P. A. Gustafsson${}^{1}$, Axel Kleinschmidt${}^{2,3}$ and Daniel Persson${}^1$}

\vspace{10mm}

 ${}^1${\it Chalmers University of Technology, Dept. of Fundamental Physics\\
SE-412\,96 Gothenburg, Sweden}\\
\texttt{\footnotesize{henrik.gustafsson, daniel.persson@chalmers.se}}
\vskip 1 em
${}^2${\it Max-Planck-Institut f\"{u}r Gravitationsphysik (Albert-Einstein-Institut)\\
Am M\"{u}hlenberg 1, DE-14476 Potsdam, Germany}\\
\texttt{\footnotesize{axel.kleinschmidt@aei.mpg.de}}
\vskip 1 em
${}^3${\it International Solvay Institutes\\
ULB-Campus Plaine CP231, BE-1050 Brussels, Belgium}
\vspace{1cm}

\vspace{5mm}

\hrule

\vspace{3mm}

\begin{tabular}{p{12cm}}
{\footnotesize We investigate Fourier coefficients of automorphic forms on split simply-laced Lie groups $G$. We show that for 
automorphic representations of small Gelfand-Kirillov dimension the Fourier coefficients are 
completely determined by certain degenerate Whittaker vectors on $G$. Although we expect our results to hold for arbitrary simply-laced groups, 
we give complete proofs only for $G=SL(3)$ and $G=SL(4)$. This is based on a method of Ginzburg that associates Fourier coefficients of 
automorphic forms with nilpotent orbits of $G$. Our results  complement and extend recent results of Miller and Sahi. We also use our formalism 
to calculate various local (real and $p$-adic) spherical vectors of minimal representations of the exceptional groups $E_6, E_7, E_8$ using global (adelic) degenerate Whittaker vectors, 
correctly reproducing existing results for such spherical vectors obtained by very different methods. }
\end{tabular}
\vspace{5mm}
\hrule
\end{center}

\newpage
\setcounter{page}{1}

\tableofcontents

\pagebreak

\section{Introduction}
This paper is concerned with Fourier coefficients of automorphic forms on split, simply-laced Lie groups $G$, attached to certain special unipotent (in the sense of Arthur~\cite{Arthur1,Arthur2}) representations of unusually small functional (or Gelfand-Kirillov) dimension. In the modern theory of automorphic forms one usually considers $G$ to be the group of adelic points $G(\mathbb{A}_k)$ of an algebraic group $G(k)$ over some number field $k$. In this paper we  restrict to the case $k=\mathbb{Q}$ and write $\mathbb{A}$ for the adeles of $\mathbb{Q}$. 

The study of Fourier coefficients has been at the centre of attention in the theory of automorphic forms for many years. A major application lies in Langlands' theory of automorphic $L$-functions and the transfer between automorphic representations of a group $G(\mathbb{A})$ to another $G'(\mathbb{A})$. According to Langlands one can attach an $L$-function $L(\pi, s)$ to each automorphic representation $\pi$ of $G$, depending on a complex variable $s$. A basic conjecture is that this function admits a meromorphic continuation to all of $\mathbb{C}$ and satisfies a functional equation relating $L(\pi, s)$ with $L(\pi, 1-s)$. Roughly, the principle of functoriality then asserts that if there is a functorial transfer from an automorphic representation $\pi$ of $G(\mathbb{A})$ to $\pi'$ of $G'(\mathbb{A})$ then the associated $L$-functions are equal $L(\pi, s)=L(\pi', s)$. Langlands showed \cite{LanglandsEP} that a rich source of $L$-functions arises from the constant terms (i.e. zeroth Fourier coefficients) of Eisenstein series, and Shahidi extended this method \cite{Shahidi} and showed that also the non-constant Fourier coefficients give rise to automorphic $L$-functions.

A complete Fourier expansion of an automorphic form $\varphi$ on a group $G(\mathbb{A})$ is in general hard to come by. The idea is to choose a unipotent subgroup $U\subset G$ and try to write $\varphi$ as a sum of terms $\sum_{\psi_U} F_{\psi_U}$, where the sum is over unitary characters $\psi_U$ on $U(\mathbb{A})$ trivial on $U(\mathbb{Q})$ and each ``Fourier coefficient'' $F_{\psi_U}$ is manifestly invariant with respect to the discrete subgroup $U(\mathbb{Q})\subset U(\mathbb{A})$. In effect, one wishes to diagonalise the action of $U(\mathbb{Q})$. This works well if $U$ is abelian but whenever $U$ is non-abelian the expansion is considerably more complicated. However, it may happen that for  special types of automorphic representations the task of obtaining the Fourier expansion simplifies due to the fact that many of the Fourier coefficients vanish because of representation-theoretic constraints on the function space. A simple instance of this phenomenon occurs in the classical case of holomorphic modular forms for $SL(2, \mathbb{Z})$. These are $SL(2,\mathbb{Z})$-covariant functions $\varphi(\tau)$ on the complex upper half plane $SL(2,\mathbb{R})/SO(2)$ which admit a Fourier expansion of the form $\varphi(\tau)=\sum_{n\geq 0} a(n) e^{2\pi i n \tau}$, where the sum is restricted to $n\geq 0$ in order for the function to be holomorphic in $\tau$. Equivalently, one can view this representation-theoretically and say that all coefficients $a(n)$ must vanish whenever $n<0$ due to the fact that $\varphi$ is attached to the holomorphic discrete series of $SL(2,\mathbb{R})$.  

Similar phenomena may occur for higher-rank groups when restricting to automorphic representations $\pi$ with small Gelfand-Kirillov dimension. The typical example is the \emph{minimal representation} $\pi_{min}$ which has the smallest non-trivial Gelfand-Kirillov dimension among all representations \cite{MR0404366,MR1159103,Gunaydin:2001bt}. For simply-laced groups $G$ it was shown in the seminal paper by Ginzburg-Rallis-Soudry \cite{GRS} that automorphic forms attached to $\pi_{min}$ have ``very few'' non-vanishing Fourier coefficients. Automorphic forms attached to the minimal representation can be realised as special points in the parameter space of Langlands-Eisenstein series and this fact has  been used extensively to study liftings of automorphic forms via the so called \emph{theta correspondence} (see, e.g., \cite{Prasad}).

Fourier coefficients of automorphic forms also play an important role in string theory, where they capture certain non-perturbative (instanton) effects in gravitational scattering amplitudes and black hole partition functions. In recent years there has been a lot of progress in understanding the relation to automorphic representations and it turns out that special unipotent automorphic representations, like the minimal representation, show up naturally in this context \cite{Green:2010kv,GreenSmallRep,Pioline:2010kb,Fleig:2012zc,FK2012,FKP,Bossard:2014lra,Bossard:2014aea}. In \cite{GreenSmallRep} the results of Ginzburg-Rallis-Soudry were  extended to automorphic forms in the next-to-minimal representation $\pi_{ntm}$ which were also shown to have very few Fourier coefficients, a fact that has a direct interpretation in string theory. Fourier coefficients attached to small automorphic representations have also been proposed to capture  microscopic degeneracies of certain black holes in string theory; see for instance \cite{Pioline:2005vi,Gunaydin:2005mx,Pioline:2009qt,Bao:2009fg,Bao:2010cc,PerssonAuto}. 

The discussion above was phrased in the global language of automorphic representations $\pi$ of adelic groups $G(\mathbb{A})$. For so called \emph{admissible representations} one has an Euler product factorisation $\pi=\otimes \pi_p$ into local representations $\pi_p$ for each prime $p\leq \infty$. For finite primes $p<\infty$ these are $p$-adic (or non-Archimedean) representations of $G(\mathbb{Q}_p)$ while for $p=\infty$ these are real (or Archimedean) representations of $G(\mathbb{R})$. There is a corresponding notion of minimal representation $\pi_{min, p}$ also for local representations and the analogues of the global Fourier coefficients attached to $\pi_{min, p}$ are so called {\it $p$-adic spherical vectors} $f_p^{\circ}$ which are vectors in $\pi_{min, p}$ invariant under the compact subgroup $G(\mathbb{Z}_p)\subset G(\mathbb{Q}_p)$. Representations for which such vectors exist are called \emph{unramified}. For minimal representations it is rather difficult to obtain explicit expressions for these spherical vectors, but results are in particular known for various realisations of the minimal representation of exceptional groups $E_6, E_7, E_8$ \cite{DS,Kazhdan:2001nx,KazhdanPolishchuk,SavinWoodbury}. 

The simplest type of Fourier coefficient $F_{\psi_U}$ occurs when $U$ is taken to be the maximal unipotent radical $N$ in the Levi decomposition $B=AN$ of the standard Borel subgroup $B\subset G$. This is known as a \emph{Whittaker coefficient}, usually denoted by $W$, and it is a famous result of Langlands that this factorises $W=\otimes W_p$ into a product of local Whittaker coefficients $W_p$ for each local representation $\pi_p$. In general, however, a global Fourier coefficient $F_{\psi_U}$ of an automorphic form $\varphi\in \pi$ does not exhibit a similar Euler product factorisation since it is typically given by a non-trivial sum of different Weyl orbits that does not factorise.  As we shall see, though, in the case when $\pi$ is a minimal representation it can happen that $F_{\psi_U}$  factorises also for unipotent radicals $U$, other than the maximal one $N$, and the local factors $F_{\psi_U,p}$ are then given by the spherical vectors $f_p^{\circ}$ discussed above. 

Given an automorphic representation $\pi$ an important question is to determine whether a certain Fourier coefficient vanishes or not. A powerful method for doing this has been developed by Ginzburg \cite{MR2214128} and Miller-Sahi \cite{MillerSahi}, following earlier local results of Moeglin-Waldspurger \cite{MR913667} (for $p<\infty$) and Matumoto \cite{MR892192} (for $p=\infty$). The idea is to parametrize the Fourier coefficients by nilpotent $G$-orbits, i.e. the adjoint action of $G$ on any nilpotent element in the Lie algebra $\mathfrak{g}$. Each automorphic representation $\pi$ with Gelfand-Kirillov dimension $n$ can itself be associated with a nilpotent orbit $\mathcal{O}$ (sometimes denoted $\Oh_\pi$) of dimension $2n$ via Kirillov's ``orbit method''~\cite{MR1701415}. Each Fourier coefficient of $\pi$ associated with nilpotent orbits which are outside of the closure $\overline{\mathcal{O}}$ is then expected to vanish. We refer to the set of nilpotent orbits with non-vanishing Fourier coefficients as the \textit{wavefront set}. For example, when $\pi$ is the minimal representation $\pi_{min}$, it is known that only those Fourier coefficients attached to the trivial orbit $\mathcal{O}_0$ and the minimal orbit $\mathcal{O}_{min}$ are non-vanishing \cite{GRS} and the wavefront set is given by the closure of the minimal nilpotent orbit with Bala--Carter label $A_1$.\footnote{See section~\ref{nilpotentorbits} for a brief review of nilpotent orbits and their labelling.}

Miller and Sahi \cite{MillerSahi} use a related but slightly different perspective based on the Piatetski-Shapiro-Shalika method~\cite{MR546599,MR0348047} to show that any Fourier coefficients of the minimal representations of the exceptional groups $E_6, E_7$ are completely determined by certain maximally degenerate Whittaker vectors. 

In this paper we combine the results of Miller-Sahi with the method of Ginzburg to establish various results concerning Fourier coefficients of certain Eisenstein series attached to special automorphic representations. We will consider both the global and the local perspective. Our main interest is eventually with the exceptional groups $E_6, E_7, E_8$ but many of our results concern the case of $SL(n)$ for $n=3,4$. Let us now briefly summarise the main results of the paper. 

Let $G(\mathbb{Q})$ be a split, simply-laced Lie group and $G(\mathbb{A})$ its adelization. Let $B=AN$ be the Borel subgroup, and introduce a quasi-character $\chi : B(\mathbb{Q})\backslash B(\mathbb{A})\to \mathbb{C}^*$. Let $E(\chi, g)$ be the associated Langlands-Eisenstein series on $G(\mathbb{A})$, attached to the non-unitary principal series $\text{Ind}_{B}^{G}\chi$. The minimal representation $\pi_{min}$ and next-to-minimal representation $\pi_{ntm}$ of $G(\mathbb{A})$ can both be realised as submodules of $\text{Ind}_B^G\chi$ for certain choices of $\chi$ \cite{GRS,GreenSmallRep}. We then have 

\begin{maintheorem}
    \label{thm:orbit-exp}
    Let $\varphi$ be an automorphic form on the special linear group $G(\ads) = SL(3, \ads)$ or $G(\ads) = SL(4, \ads)$ belonging to the principal series $\Ind_{B}^{G}\chi$. Then, $\varphi$ can be expanded as
   \begin{equation}
       \varphi(\chi, g) = \sum_{\Oh} \mathcal F_\Oh(\chi, g) 
   \end{equation} 
   where the sum is over all nilpotent orbits $\Oh$ of $G$. Each $\mathcal F_\Oh$ is (linearly) determined by Fourier coefficients $F_\Oh$ (see section \ref{sec:orbit-rep}) attached to the nilpotent orbit $\Oh$.
    If $\varphi$ belongs to an automorphic subrepresentation $\pi$ of $\Ind_{B}^{G}\chi$ with associated nilpotent orbit $\Oh_\pi$, then all $\mathcal F_\Oh$ where $\Oh \notin \overline{\Oh_\pi}$ vanish. 
\end{maintheorem}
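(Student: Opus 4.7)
The plan is to perform an iterated Fourier expansion of $\varphi$ along a descending sequence of subgroups of the maximal unipotent radical $N\subset G$, and to regroup the resulting characters according to their $G(\mathbb{Q})$-orbits, which by the Kirillov correspondence are in bijection with the nilpotent orbits $\mathcal{O}$ of $\mathfrak{g}$. For $G=SL(3)$ and $G=SL(4)$ the nilpotent orbits are indexed by partitions of $n$, so the set is finite and the decomposition can be carried out explicitly layer by layer.

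First I would peel off one Fourier expansion at a time, beginning with the center $Z(N)$, then passing to $N/Z(N)$, and continuing until $N$ is exhausted. At each stage the rational characters that appear are sorted into orbits under the rational Levi action; each such orbit corresponds via Kirillov to a single nilpotent $G$-orbit $\mathcal{O}$, and the piece of $\varphi$ supported on that orbit-class of characters is the candidate for $\mathcal{F}_{\mathcal{O}}$. The contribution from the trivial character at each stage is a function on the corresponding quotient and is further expanded by the subsequent stage. A direct Fourier inversion on the relevant nilpotent group together with a change of summation over the $G(\mathbb{Q})$-orbit then writes $\mathcal{F}_{\mathcal{O}}(g)$ as a finite linear combination of translates of the standard orbital Fourier coefficients $F_{\mathcal{O}}$ defined in section~\ref{sec:orbit-rep}, verifying the second assertion of the theorem.

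For the vanishing claim, I would invoke the local results of Moeglin-Waldspurger (for $p<\infty$) and Matumoto (for $p=\infty$): if $\mathcal{O}\not\subset\overline{\mathcal{O}_\pi}$, then every local component $\pi_p$ annihilates any functional pairing with a character whose nilpotent datum lies in $\mathcal{O}$. Because each $F_{\mathcal{O}}(\varphi)$ factors locally through such functionals by Ginzburg's construction, the global coefficient must also vanish. Combined with the linear dependence of $\mathcal{F}_{\mathcal{O}}$ on the $F_{\mathcal{O}}$ established in the previous step, this yields $\mathcal{F}_{\mathcal{O}}\equiv 0$ whenever $\mathcal{O}\not\subset\overline{\mathcal{O}_\pi}$.

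The main obstacle is the combinatorial bookkeeping for the intermediate orbits: one must verify that every rational character produced by the iterated expansion lands in exactly one $G(\mathbb{Q})$-orbit, and that the non-abelian Fourier expansions along $N/Z(N)$ and the finer quotients generate characters whose orbits correspond precisely to the expected partitions of $n$. For $SL(4)$ this amounts to separately tracking the contributions of the orbits $[2,1^2]$, $[2,2]$, and $[3,1]$, each requiring its own calculation; it is here that the restriction to $n\leq 4$ becomes essential, since beyond that rank the partition combinatorics would no longer be manageable in a single uniform sweep.
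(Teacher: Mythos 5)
Your proposal follows essentially the same route as the paper: Fourier expand iteratively along $N$ and its derived subgroup, sort the resulting characters by the nilpotent $G$-orbit in which their image in $\mathfrak{g}$ lies, rewrite the Whittaker vectors in terms of the orbit coefficients $F_\Oh$ via $L(\rats)$-conjugation and auxiliary unfoldings, and invoke Matumoto/Moeglin--Waldspurger for the vanishing outside $\overline{\Oh_\pi}$. The only small imprecisions are the phrase ``finite linear combination'' (each $\mathcal F_\Oh$ is an infinite sum over rational charges plus compact integrations, as the paper notes after Theorem~\ref{thm:orbit-exp}) and the word ``bijection'' (the map from $L(\rats)$-orbits of characters into nilpotent $G$-orbits is many-to-one, not bijective), but neither affects the structure of the argument.
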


Note that there are a finite number of terms $\mathcal F_\Oh$, but that each $\mathcal F_\Oh$ contains a infinite number of Fourier coefficients with different characters. Furthermore, there is some ambiguity in how each separate $\mathcal F_\Oh$ is defined for different representations. See the discussion after the proof of theorem \ref{thm:SL4-min-rep} for more details. We will prove this theorem using the Eisenstein series $E(\chi,g)$ that is the spherical vector in the automorphic realisation of the minimal representation but since we are not using sphericality there will be no loss of generality in the proof. 

\begin{remark}
Theorem \ref{thm:orbit-exp} is very reminiscent of the expansion of the character distribution of a representation $\pi$ when restricted to a sufficiently small neighbourhood of zero. More precisely, according to Harish-Chandra \cite{MR1702257} and Howe \cite{MR0342645} the character $\chi_\pi$ of a $G$-representation $\pi$ has an expansion
\beq
\chi_\pi = \sum_{\mathcal{O}} c_\mathcal{O}(\pi) \widehat{\mu}_\mathcal{O},
\eeq
where the sum runs over all nilpotent $G$-orbits, $\widehat{\mu}_\mathcal{O}$ denotes the Fourier transform of a distribution on $G$ (a certain ``orbital integral''), and  $c_\mathcal{O}(\pi)$ are complex numbers. For the trivial representation all the $\widehat{\mu}_\mathcal{O}$ vanish except the one associated with the trivial orbit $\mathcal{O}_0$. For the $p$-adic minimal representation $\pi_{min, p}$ one has the following fundamental result (see \cite{GanSavin}):
\begin{equation}
\chi_{\pi_{min, p}} = \widehat{\mu}_{\mathcal{O}_{min}}+ c_0
\end{equation}
where $\mathcal{O}_{min}$ is the smallest non-trivial nilpotent orbit (minimal orbit).\footnote{This formula can in fact be taken as the definition of $\pi_{min, p}$.} The structure of $\chi_\pi$ is closely related to Fourier coefficients of automorphic forms by the following results. An admissible automorphic representation $\pi$ has a Whittaker model only if there is a regular nilpotent orbit $\mathcal{O}$ such that $c_{\mathcal{O}}(\pi)$ is non-zero \cite{MR0393355}. Moreover, it was shown by Moeglin-Waldspurger \cite{MR913667} in the $p$-adic setting that for the maximal orbit $\mathcal{O}$ such that $c_\mathcal{O}(\pi)$ is non-zero, there exists a (possibly degenerate) Whittaker model whose dimension is precisely $c_\mathcal{O}$. For example, for generic representations $\pi$ (e.g. the full principal series) the leading orbit is the regular orbit $\mathcal{O}_{reg}$ and thus in this case $c_\mathcal{O}(\pi)$ gives the dimension of the generic Whittaker model of $\pi$. In the real setting (i.e. for $p=\infty$) similar results were obtained by Matumoto \cite{MR892192}. Relations between degenerate Whittaker vectors and small representations have also been explored in more recent work by Gourevitch and Sahi \cite{MR3029948,2012arXiv1210.4064G}. In general, very little is known about the numbers $c_{\mathcal{O}}$ and for intermediate representations one can have situations with non-trivial multiplicities of Whittaker models, i.e. when $c_\mathcal{O}(\pi)> 1$.\footnote{We thank Gordan Savin for very helpful correspondence on these issues.}  
\end{remark}

\vspace{.4cm} 

The following theorem then shows that, in the minimal representation, $\varphi$ is completely determined by maximally degenerate Whittaker vectors (defined in section \ref{DegWhit}) and the constant term attached to the trivial orbit.

\begin{maintheorem}
    \label{thm:min-rep}
    Let $\varphi$ be an automorphic form on the special linear group $G(\ads)=SL(3, \ads)$ or $G(\ads) = SL(4, \ads)$ belonging to the minimal representation $\pi_\text{min}$. All Fourier coefficients $F_\Oh$ of $\varphi$ attached to nilpotent orbits outside of the closure $\overline{\Oh_\text{min}}$ of the minimal orbit $\Oh_\text{min}$ then vanish, and those attached to $\Oh_\text{min}$ are completely determined by the maximally degenerate Whittaker vectors 
    \begin{equation}
        W_{\psi_\alpha}(g)=\intl[N(\rats)\bs N(\ads)] \varphi(ng)\overline{\psi_\alpha(n)}dn, 
        \label{maxdegwhit}
    \end{equation}
    where $\psi_\alpha$ is non-trivial only on a one-parameter subgroup $N_\alpha\subset N$ corresponding to a single simple root $\alpha$ of $\mathfrak{g}$.
\end{maintheorem}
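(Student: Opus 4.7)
The plan is to combine Theorem~\ref{thm:orbit-exp} with a reduction of the minimal-orbit Fourier coefficients to single-simple-root Whittaker vectors via Ginzburg's root-exchange technique. First I would apply Theorem~\ref{thm:orbit-exp} to $\pi = \pi_\text{min}$. Since the associated nilpotent orbit is $\Oh_\pi = \Oh_\text{min}$ and, for the simply-laced groups $G = SL(3)$ and $G = SL(4)$, the closure satisfies $\overline{\Oh_\text{min}} = \Oh_0 \cup \Oh_\text{min}$ (the defining property of the minimal orbit), the orbit expansion collapses to
\[ \varphi = \mathcal F_{\Oh_0} + \mathcal F_{\Oh_\text{min}}, \]
and every $\mathcal F_\Oh$ with $\Oh \notin \overline{\Oh_\text{min}}$ vanishes. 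This immediately gives the vanishing half of the statement.

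Next I would analyse $\mathcal F_{\Oh_\text{min}}$ using the unipotent-subgroup realisation introduced in section~\ref{sec:orbit-rep}. For the minimal orbit of $\mathfrak{sl}_n$ (partition $[2,1,\dots,1]$) a natural representative is the root vector $e_\alpha$ for a simple root $\alpha$, so the attached character is $\psi_\alpha$, which is non-trivial only on the one-parameter root subgroup $N_\alpha \subset N$. The goal becomes showing that every Fourier integral contributing to $\mathcal F_{\Oh_\text{min}}$ can, after a suitable conjugation by a Weyl element and translation by a Levi element, be re-expressed as the integral~\eqref{maxdegwhit} of $\varphi$ against such a $\psi_\alpha$. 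Two ingredients support this: (i) in $SL(3)$ and $SL(4)$ all simple roots lie in the same Weyl orbit, so the different $W_{\psi_\alpha}$ are related by Weyl translations of $\varphi$; (ii) Ginzburg's root-exchange trick rewrites an integral over a proper unipotent subgroup $U$ as an integral over the enlarged unipotent $N$ at the cost of extra Fourier integrations over root subgroups absent from $U$.

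The main obstacle, and the crux of the argument, is justifying that these auxiliary Fourier integrations introduced by root exchange do not contribute. This is exactly where the hypothesis $\varphi \in \pi_\text{min}$ is used essentially: each additional integration produces a Fourier coefficient whose character corresponds either to a nilpotent element in a strictly larger orbit than $\Oh_\text{min}$, or to a sum of root vectors whose $G$-orbit lies outside $\overline{\Oh_\text{min}}$; by Theorem~\ref{thm:orbit-exp} all such integrals vanish. Carrying this out requires a case-by-case enumeration of root subgroups and their sums inside $N$, which is precisely where the restriction to the low-rank groups $SL(3)$ and $SL(4)$ becomes decisive---for general simply-laced $G$ the combinatorics of the required root-exchange identities is what obstructs a uniform proof.

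Finally, I would assemble these identifications to conclude that on the $\Oh_\text{min}$ locus every Fourier coefficient is proportional to one of the maximally degenerate Whittaker vectors $W_{\psi_\alpha}$ with $\alpha$ a simple root of $\mathfrak{g}$. Together with the constant term in $\mathcal F_{\Oh_0}$, this shows that $\varphi$ is completely determined by the finite collection $\{W_{\psi_\alpha}\}_{\alpha\text{ simple}}$, as claimed.
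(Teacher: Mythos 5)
Your proposal follows essentially the same route as the paper: use Theorem~\ref{thm:orbit-exp} to kill all $\mathcal F_\Oh$ with $\Oh\notin\overline{\Oh_\text{min}}$, then Fourier-expand/conjugate $F_{\Oh_\text{min}}$ (the root-exchange technique) so that the auxiliary terms attach to strictly larger orbits and vanish, leaving only maximally degenerate Whittaker vectors. The one imprecision worth flagging: in the paper's definition the orbit coefficient $F_{\Oh_\text{min}}$ is an integral over $V=U_{i\geq 2}$ with a character supported on the \emph{highest} root subgroup (the centre of the Heisenberg for $SL(3)$, the top-right corner for $SL(4)$), not on a simple root subgroup $N_\alpha$; the passage from that coefficient to degenerate $N$-Whittaker vectors supported on a single simple root is exactly the non-trivial content supplied by the explicit manipulations in Lemma~\ref{lem:SL4-211-expanded} (and its $SL(3)$ analogue \eqref{eq:SL3-Fmin-min-rep}), and the result is a \emph{sum over Levi translates} of $W_{\psi_\alpha}$ rather than a single such vector. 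With that correction your argument matches the paper's proof.
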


The explicit expression for $F_{\Oh_\text{min}}$ in the minimal representation in terms of such Whittaker vectors can be found in the proof of the theorem in section \ref{sec:SL3} for $SL(3)$ and section \ref{sec:SL4} for $SL(4)$. 

We define the next-to-minimal representation to be that automorphic representation with wavefront set given by the next-to-minimal orbit, i.e., the one given by Bala--Carter label $2A_1$. Then we have the following result for the next-to-minimal representation:

\begin{maintheorem}
    \label{thm:ntm-rep}
    Let $\varphi$ be an automorphic form on the special linear group $G(\ads) = SL(4, \ads)$ belonging to the next-to-minimal representation $\pi_\text{ntm}$. The closure $\overline{\Oh_\text{ntm}}$ of the next-to-minimal nilpotent orbit contains $\Oh_\text{ntm}$, $\Oh_\text{min}$ and $\Oh_0$. All Fourier coefficients $F_\Oh$ of $\varphi$ attached to nilpotent orbits outside $\overline{\Oh_\text{ntm}}$ vanish; coefficients attached to $\Oh_\text{ntm}$ are completely determined by the degenerate Whittaker vectors 
    \begin{equation}
        \label{nextmaxdegwhit}
        W_{\psi_{\alpha, \beta}}(\chi, g)=\int_{N(\rats)\backslash N(\ads)} \varphi(ng)\overline{\psi_{\alpha, \beta}(n)} \, dn, 
    \end{equation}
    where $\psi_{\alpha, \beta}$ is non-trivial only on a two-parameter subgroup $N_{\alpha, \beta}$ corresponding to two commuting simple roots $\alpha, \beta$ of $\mathfrak{g}$.
    Coefficients attached to $\mathcal{O}_{min}$ are completely determined by degenerate Whittaker vectors of the form \eqref{maxdegwhit} and \eqref{nextmaxdegwhit}.
\end{maintheorem}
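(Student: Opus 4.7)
The plan is to combine Theorem \ref{thm:orbit-exp} with a Ginzburg-style unfolding of the defining integrals of the $F_\Oh$, in direct analogy with the proof of Theorem \ref{thm:min-rep} but adapted to the larger wavefront set $\overline{\Oh_\text{ntm}}$. In $SL(4)$ the nilpotent orbits are indexed by the partitions of $4$, and $\overline{\Oh_\text{ntm}}$ is the closure of the $[2,2]$ orbit, containing precisely the three orbits $[1^4]$, $[2,1^2]$, $[2,2]$. The first assertion, namely $F_\Oh = 0$ for $\Oh\notin\overline{\Oh_\text{ntm}}$, is read off from the first half of Theorem \ref{thm:orbit-exp} together with the general wavefront-set vanishing of Moeglin--Waldspurger and Matumoto recalled in the introduction, which annihilates every Fourier integral attached to the partitions $[3,1]$ and $[4]$ on $\pi_\text{ntm}$.

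For the coefficient attached to $\Oh_\text{ntm}$ I would fix a Jacobson--Morozov representative $e = E_{\alpha_1} + E_{\alpha_3}$ of $[2,2]$, using that $\alpha_1$ and $\alpha_3$ are the two commuting simple roots of $\mathfrak{sl}_4$. The integral $F_{\Oh_\text{ntm}}$ is then an integral over the unipotent piece $V\subset N$ prescribed by the $\ge 2$ part of the associated grading, with character $\psi_e$ non-trivial exactly on $N_{\alpha_1}$ and $N_{\alpha_3}$. Inserting successive Fourier expansions along every remaining root subgroup of $N\setminus V$ extends the domain of integration to all of $N$, producing $W_{\psi_{\alpha_1,\alpha_3}}$ from (\ref{nextmaxdegwhit}) plus additional terms whose characters involve extra roots. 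Conjugating each such extra character into a standard form by Weyl elements in the stabiliser of $e$ identifies it with a character attached to the strictly larger partitions $[3,1]$ or $[4]$, so it vanishes on $\pi_\text{ntm}$ by the first assertion.

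For $F_{\Oh_\text{min}}$ I would start from the representative $e = E_{\alpha_2}$ of $[2,1^2]$ and perform the same unfolding to the full $N$. The proof of Theorem \ref{thm:min-rep} carries out exactly this manipulation and discards every non-$\psi_\alpha$ character using the stronger vanishing available in $\pi_\text{min}$. In $\pi_\text{ntm}$ only characters attached to orbits outside $\overline{\Oh_\text{ntm}}$ vanish, so characters supported on two commuting simple roots now survive and contribute Whittaker vectors of the form (\ref{nextmaxdegwhit}) in addition to those of the form (\ref{maxdegwhit}). Collecting the two surviving families yields the stated determination.

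The main technical obstacle is the combinatorial step inside each unfolding: one must verify that every character appearing in the intermediate expansions, other than those of $\psi_\alpha$- or $\psi_{\alpha,\beta}$-type, is Weyl-conjugate to a character attached to $\Oh_{[3,1]}$ or $\Oh_{[4]}$. This requires explicit computation of the $N$-action on the character lattice of each intermediate unipotent, a careful enumeration of the representatives of its characters under the stabiliser of the fixed grading, and a check that the commutator structure of $N$ permits the iterated Fourier expansion to proceed as independent one-parameter integrations. In $SL(4)$ the enumeration is finite but delicate, and is where the bulk of the proof resides.
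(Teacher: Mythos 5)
Your overall strategy — invoke the expansion of Theorem~\ref{thm:orbit-exp}, kill the coefficients attached to $(4)$ and $(31)$ by wavefront-set vanishing, then unfold the surviving orbit coefficients into degenerate Whittaker vectors — is exactly the paper's plan, and the vanishing half of the theorem is correctly disposed of. However, your setup of the unfolding step contains two concrete errors, and the step you defer as ``delicate combinatorics'' is in fact the proof.

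First, your choice of Jacobson--Morozov representative $e = E_{\alpha_1}+E_{\alpha_3}$ does not give the integral that the theorem is about. The neutral element of its triple is $H=\diag(1,-1,1,-1)$, which is not dominant, and the resulting $U_{\geq 2}$ contains the \emph{negative} root space $E_{-\alpha_2}$ (that root has $H$-weight $+2$), so the ``$V\subset N$'' you write down is simply false. Ginzburg's definition~\eqref{OrbitFC}, which the theorem quantifies over, uses the dominant $H=\diag(1,1,-1,-1)$ from the weighted Dynkin diagram $[0,2,0]$, for which $V$ is the abelian upper-right $2\times 2$ block and $\psi_V$ is supported on $\alpha_1+\alpha_2$ and $\alpha_2+\alpha_3$ with $m_1 m_4 - m_2 m_3\neq 0$ (Table~\ref{tab:SL4-orbit-coefficients}). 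You are conflating that four-dimensional orbit integral with the full-$N$ Whittaker vector $W_{\psi_{\alpha_1,\alpha_3}}$ of~\eqref{nextmaxdegwhit}; these have different domains and different character supports, and relating them is precisely the content of Lemmas~\ref{lem:SL4-22-partially-summed-form}--\ref{lem:SL4-22-expanded}. The same issue affects your treatment of $(21^2)$: for the dominant $H=\diag(1,0,0,-1)$ the group $V$ is the one-dimensional $(1,4)$ corner (the highest root), not $N_{\alpha_2}$.

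Second, even granting a correct $V$, the naive ``insert Fourier expansions along $N\setminus V$ and conjugate'' does not go through for $F_{(2^2)}$ as stated. The paper has to replace the individual coefficient $F_{(2^2)}(\chi,m_1,m_2,m_3,m_4;g)$ by the \emph{partially summed} version $F_{[2^2]}(\chi,m_1',m_4';g)=\sum_m F_{(2^2)}(\chi,m_1',0,m,m_4';g)$ before the unfolding closes up cleanly into $F_{(31)}$-coefficients plus degenerate Whittaker vectors (Lemma~\ref{lem:SL4-22-partially-summed-form}, then Lemma~\ref{lem:SL4-22-expanded}); the remark preceding Theorem~\ref{thm:SL4-ntm-rep} singles this out as a necessary device, and it does not appear in your sketch. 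Finally, the check that every intermediate character that survives lands in the right orbit is not a ``finite but delicate'' side-calculation to be done later — it is carried out explicitly in Table~\ref{tab:SL4-Whittaker-as-orbits}, Lemma~\ref{lem:SL4-211-expanded} and Lemma~\ref{lem:SL4-22-expanded}, and those computations constitute the bulk of the proof.
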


Note that $SL(3,\ads)$ does not have a nilpotent orbit of type $2A_1$ which is why the above theorem is only stated for $SL(4, \ads)$. (The next largest orbit for $SL(3,\ads)$ after the minimal $A_1$-type orbit is the regular $A_2$-type orbit~\cite{CollingwoodMcGovern}.)

The explicit expressions for $F_{\Oh_\text{min}}$ and $F_{\Oh_\text{ntm}}$ in the next-to-minimal representation in terms of the Whittaker vectors above can be found in the proof of the theorem in section \ref{sec:SL4}. As a result, in the next-to-minimal representation, $\varphi$ is completely determined by maximally degenerate Whittaker vectors and the degenerate Whittaker vectors of the form \eqref{nextmaxdegwhit} together with the constant term.

\begin{remark}
    Theorem \ref{thm:min-rep} can be viewed as a concretization of the main result in Miller-Sahi \cite{MillerSahi} but for $SL(3, \ads)$ and $SL(4, \ads)$, while theorem \ref{thm:ntm-rep} is the generalisation of this result to the next-to-minimal representation. We expect that theorem \ref{thm:ntm-rep} generalises to any split, simply-laced Lie group $G$. 
\end{remark}

Although at the moment we cannot prove the above results for the exceptional Lie groups $E_6, E_7, E_8$ we can still use the same philosophy to  rederive local results about spherical vectors for minimal representations from the explicit knowledge of global degenerate Whittaker vectors obtained in our previous work \cite{FKP}. The minimal representations $\pi_{min}$ factorises according to 
\beq
\pi_{min}=\bigotimes_{p\leq \infty} \pi_{min, p}
\eeq
and so for any vector $f\in \pi_{min}$ we have $f=\otimes_p f_p$ with $f_p\in \pi_{min, p}$ (including $p=\infty$). This fact allows us to use very simple manipulations to deduce known results, which were obtained by very different methods.

Let $G$ be one of the Lie groups $E_6, E_7, E_8$ and $\mathfrak{g}$ one of the associated Lie algebras $\mathfrak{e}_6, \mathfrak{e}_7, \mathfrak{e}_8$. Let $\varphi\in \pi_{min}$ be a vector in the minimal representation of $G(\mathbb{A})$, obtained by restricting the character $\chi$ in the Eisenstein series $E(\chi, g)$ to the special value $\chi=\chi_0$. Let $W_{\psi_\alpha}(\chi, g)$ be the associated maximally degenerate Whittaker vector for $\alpha$ a simple root of $\mathfrak{g}$. These Whittaker vectors were calculated for all simple roots in Appendices A.1, A.2 and A.3 of \cite{FKP}. In what follows we use the Bourbaki labelling of the simple roots of \nolinebreak[4]$\lie g$.

Any global Whittaker vectors $W_{\psi}$ factorises into local components as $W_\psi=\prod_{p\leq \infty} W_{\psi_p}$ and one can compare the various local components to spherical vectors of the minimal representation already computed in the literature. We have the results:

\begin{proposition}[the case of $G=E_6$] 
\label{prop:E6}\mbox{} 
\begin{itemize}
\item[(i)] The Archimedean component of  $W_{\psi_{\alpha_1}}(\chi, g)$ is equal to the real spherical vector $f_\infty^{\circ}$ in the minimal representation $\pi_{min, \infty}$ of $E_6(\mathbb{R})$ computed by Dvorsky-Sahi \cite{DS}; 

\item[(ii)] the non-Archimedean ($p$-adic) component of $W_{\psi_{\alpha_1}}(\chi, g)$ is equal to the $p$-adic spherical vector $f_p^{\circ}$ in the minimal representation $\pi_{min, p}$ of $E_6(\mathbb{Q}_p)$ computed by Savin-Woodbury \cite{SavinWoodbury}; 

\item[(iii)] the Archimedean component of $W_{\psi_{\alpha_2}}(\chi, g)$ is equal to the (abelian part of) the real spherical vector in the Heisenberg realisation of the minimal representation $\pi_{min, p}$ of $E_6(\mathbb{R})$, as computed by Kazhdan-Pioline-Waldron \cite{Kazhdan:2001nx};

\item[(iv)] the non-Archimedean component of $W_{\psi_{\alpha_2}}(\chi, g)$ is equal to the $p$-adic spherical vector in the Heisenberg realisation of the minimal representation $\pi_{min, p}$ of $E_6(\mathbb{Q}_p)$, computed by Kazhdan-Polishchuk \cite{KazhdanPolishchuk}.
\end{itemize}
\end{proposition}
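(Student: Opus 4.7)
The plan is to exploit the Euler product factorization of the global degenerate Whittaker vectors. Since for the special value $\chi=\chi_0$ the Eisenstein series $E(\chi_0,g)$ is a spherical vector in the automorphic realization of $\pi_{min}=\bigotimes_{p\leq\infty}\pi_{min,p}$, and since Whittaker unfolding respects the local-global decomposition, each global Whittaker vector $W_{\psi_{\alpha_i}}(\chi_0,g)$ factors as $\prod_{p\leq\infty} W_{\psi_{\alpha_i},p}(\chi_0,g_p)$, with each local factor a specific, explicit function on $G(\mathbb{Q}_p)$. Because $\pi_{min,p}$ is unramified at every prime for this character, the space of $K_p$-fixed vectors is one-dimensional, so the $W_{\psi_{\alpha_i},p}$ are determined up to scalars that can be fixed by matching leading terms.

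First, I would invoke the explicit computations in our earlier paper~\cite{FKP}, Appendices~A.1--A.3, which give closed-form expressions for $W_{\psi_{\alpha_1}}$ and $W_{\psi_{\alpha_2}}$ for each of $E_6,E_7,E_8$. These formulas already come in Euler-product form, so the local factors can be read off directly: at each finite prime $p$ they are finite/Kloosterman-type sums indexed by $\mathbb{Q}_p$-points of a lattice encoding the reduced root subsystem orthogonal to $\alpha_i$, while at $p=\infty$ they are given by modified Bessel integrals on the same lattice.

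Second, I would carry out the comparison place by place. For~(i), the Archimedean factor $W_{\psi_{\alpha_1},\infty}$ for $E_6$ is expressible through modified Bessel-$K$ functions attached to the parabolic singled out by $\alpha_1$; the real spherical vector of Dvorsky--Sahi~\cite{DS} is given in exactly this form, so the verification reduces to matching the exponents, the root sum, and the normalization. For~(ii), the local formula of Savin--Woodbury~\cite{SavinWoodbury} is a sum of characters over the $\mathbb{Z}_p$-lattice of roots orthogonal to $\alpha_1$, which is precisely the structure of the finite-place factor of $W_{\psi_{\alpha_1}}$. For~(iii) and~(iv) one uses that $\alpha_2$ is the branch node of the $E_6$ Dynkin diagram, so the associated maximal parabolic is the Heisenberg parabolic; unfolding the Whittaker integral then expresses $W_{\psi_{\alpha_2}}$ as a function on the abelianised Heisenberg radical, which is the natural domain of the spherical vectors constructed in the Heisenberg realization by Kazhdan--Pioline--Waldron~\cite{Kazhdan:2001nx} and Kazhdan--Polishchuk~\cite{KazhdanPolishchuk}. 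The comparison again reduces to matching lattices, characters, and Bessel factors.

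The main obstacle is not analytical but rather a matter of reconciling disparate conventions. Each of \cite{DS,SavinWoodbury,Kazhdan:2001nx,KazhdanPolishchuk} uses a different model of $\pi_{min,p}$ (induced from a distinguished maximal parabolic, Schr\"odinger, or Heisenberg polarization), a different identification of the inducing character, and different normalizations of Haar measure on $\mathbb{Q}_p$ and $\mathbb{R}$. The bulk of the work therefore lies in constructing an explicit dictionary between the coordinates of~\cite{FKP} and those of the cited references, and, for the Heisenberg statements~(iii)--(iv), in isolating the "abelian part" of the spherical vector (as the statement already indicates). Once this dictionary is set up, the equalities in~(i)--(iv) follow by term-by-term comparison of the explicit formulas.
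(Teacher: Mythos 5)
Your proposal follows essentially the same strategy as the paper: use the global-to-local factorization of the degenerate Whittaker vectors, read off the explicit formulas from Appendix~A of \cite{FKP}, factor the divisor sums into Euler products, and match the Archimedean Bessel factor with Dvorsky--Sahi / Kazhdan--Pioline--Waldron and the $p$-adic factor with Savin--Woodbury / Kazhdan--Polishchuk. One small correction: the non-Archimedean local factors arising from \cite{FKP} are not Kloosterman-type sums but simply the local Euler factors $\frac{1-p^s|m|_p^{-s}}{1-p^s}$ of a divisor sum $\sigma_s(m)$; once you consult the tables you would see this, and the ``dictionary'' you worry about reduces to the elementary identity $\prod_{p<\infty}|m|_p^{-k}=|m|^k$ together with matching exponents and Bessel indices, so the comparison is considerably more direct than your proposal anticipates.
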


\begin{proposition}[the case of $G=E_7$] 
\label{prop:E7}
\mbox{}
\begin{itemize}

\item[(i)] the Archimedean component of  $W_{\psi_{\alpha_7}}(\chi, g)$ is equal to the real spherical vector $f_\infty^{\circ}$ in the minimal representation $\pi_{min, \infty}$ of $E_7(\mathbb{R})$ computed by Dvorsky-Sahi \cite{DS}; 

\item[(ii)] the non-Archimedean component of $W_{\psi_{\alpha_7}}(\chi, g)$ is equal to the $p$-adic spherical vector in the minimal representation $\pi_{min, p}$ of $E_7(\mathbb{Q}_p)$, as computed by Savin-Woodbury~\cite{SavinWoodbury}; 

\item[(iii)] the Archimedean component of  $W_{\psi_{\alpha_1}}(\chi, g)$ is equal to the (abelian part of) the real spherical vector in the Heisenberg realisation of the minimal representation $\pi_{min, p}$ of $E_7(\mathbb{R})$, as computed by Kazhdan-Pioline-Waldron \cite{Kazhdan:2001nx}; 

\item[(iv)] the non-Archimedean component of $W_{\psi_{\alpha_1}}(\chi, g)$ is equal to the $p$-adic spherical vector in the Heisenberg realisation of the minimal representation $\pi_{min, p}$ of $E_7(\mathbb{Q}_p)$, computed by Kazhdan-Polishchuk \cite{KazhdanPolishchuk}.
\end{itemize}
\end{proposition}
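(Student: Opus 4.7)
The plan is to mirror the strategy used for Proposition \ref{prop:E6}, adapted to $E_7$, and the proof reduces entirely to a comparison of known explicit formulas. The key input is that any adelic Whittaker vector $W_\psi(\chi,g)$ attached to an automorphic form in $\pi_{min}$ factorises as an Euler product $W_\psi = \prod_{p \leq \infty} W_{\psi_p}$, so each of the four statements can be checked one local place at a time. The explicit form of each local factor $W_{\psi_{\alpha_i},p}$ for every simple root $\alpha_i$ of $\mathfrak{e}_7$ is already available in Appendix A.2 of \cite{FKP}, so the work is purely one of identification with the formulas in \cite{DS,SavinWoodbury,Kazhdan:2001nx,KazhdanPolishchuk}.

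First I would fix the Bourbaki labelling of $\mathfrak{e}_7$ and explain why the two simple roots $\alpha_7$ and $\alpha_1$ are singled out. The root $\alpha_7$ is the one that defines the maximal parabolic $P_7$ whose Levi is of type $E_6 \times GL(1)$ and whose unipotent radical is abelian of dimension $27$; this is precisely the parabolic used by Dvorsky-Sahi and Savin-Woodbury to realise $\pi_{min}$ as an induced representation from a one-dimensional character, so the associated maximally degenerate Whittaker vector $W_{\psi_{\alpha_7}}$ has support only on the abelian unipotent and must coincide, up to normalisation, with the spherical vector in that model. Parts (i) and (ii) then follow by taking the real, respectively $p$-adic, component of the Euler product and comparing with the closed expressions for $f_\infty^\circ$ in \cite{DS} and for $f_p^\circ$ in \cite{SavinWoodbury}. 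The root $\alpha_1$ by contrast picks out the Heisenberg parabolic whose unipotent radical is the $E_7$-Heisenberg algebra of dimension $57$ with $1$-dimensional centre; the Whittaker character $\psi_{\alpha_1}$ is supported on an abelian direction transverse to the centre, so it probes exactly the abelian part of the Heisenberg model of $\pi_{min}$ used in \cite{Kazhdan:2001nx,KazhdanPolishchuk}. Parts (iii) and (iv) follow by matching the corresponding local components of $W_{\psi_{\alpha_1}}$ with the abelian part of the spherical vectors given there.

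Concretely, for each of the four items I would: (a) extract the relevant local factor $W_{\psi_{\alpha_i},p}$ from Appendix A.2 of \cite{FKP}, which is expressed as an integral over a Weyl-translate of $N$ evaluated on the Eisenstein character $\chi_0$; (b) push this integral to a domain that matches the parametrisation of $f_\infty^\circ$ or $f_p^\circ$ in the cited references; (c) identify normalising constants (zeta values in the archimedean case, local $L$-factors of the form $\zeta_p(s)$ at the relevant arguments in the $p$-adic case) and absorb them into the choice of overall scalar. The matching of special functions is straightforward: in the archimedean case both sides are controlled by a Bessel/Whittaker integral over $\reals^n$, while in the $p$-adic case both sides reduce to an elementary sum over the divisors of a Cartan element measured by the $\mathfrak{sl}_2$-triple attached to the minimal orbit.

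The main obstacle is not computational but conventional. The references \cite{DS,SavinWoodbury,Kazhdan:2001nx,KazhdanPolishchuk} each use different models of $\pi_{min}$ (the standard model from $P_7$, versus the Heisenberg model from $P_1$), different normalisations of the character on the abelian nilradical, and different conventions for the maximal compact subgroup. I expect the bulk of the argument to consist of writing out these dictionaries explicitly and checking that the change of variables relating the two sides carries $W_{\psi_{\alpha_i},p}$ precisely to $f_p^\circ$ in each case. Once the conventions are aligned, the four equalities are immediate from inspection of the formulas, and there is no further representation-theoretic input beyond the factorisation of the global Whittaker vector.
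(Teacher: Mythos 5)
Your proposal mirrors the paper's proof exactly: identify $\alpha_7$ with the abelian ($E_6\times GL(1)$) parabolic and $\alpha_1$ with the Heisenberg parabolic, read off the degenerate Whittaker vector from Table A.2 of \cite{FKP}, factorise the divisor sum $\sigma_s(m)=\prod_{p<\infty}\frac{1-p^s|m|_p^{-s}}{1-p^s}$ into an Euler product, and match the local factors with the spherical vectors in \cite{DS,SavinWoodbury,Kazhdan:2001nx,KazhdanPolishchuk}. The only small inaccuracy is that the entries in Table A.2 of \cite{FKP} are already closed-form expressions in Bessel functions and divisor sums rather than unevaluated integrals, so the ``pushing to a matching domain'' you anticipate is not needed --- the comparison is immediate once the Euler product is inserted.
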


\begin{proposition}[the case of $G=E_8$]
\label{prop:E8}
 \mbox{}  
\begin{itemize}

\item[(i)] the Archimedean component of $W_{\psi_{\alpha_8}}(\chi, g)$ is equal  to (the abelian part of) the real spherical vector in the Heisenberg realisation of the minimal representation $\pi_{min, \infty}$ of $E_8(\mathbb{R})$, as computed by Kazhdan-Pioline-Waldron \cite{Kazhdan:2001nx}; 

\item[(ii)] the non-Archimedean component of $W_{\psi_{\alpha_8}}(\chi, g)$ is equal to the $p$-adic spherical vector in the Heisenberg realisation of the minimal representation $\pi_{min, p}$ of $E_8(\mathbb{Q}_p)$, computed by Kazhdan-Polishchuk \cite{KazhdanPolishchuk}. 
\end{itemize}
\end{proposition}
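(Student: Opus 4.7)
The plan is to verify the proposition by direct comparison of explicit formulas on the two sides: the left-hand side, $W_{\psi_{\alpha_8}}(\chi_0, g)$, was already computed place by place in Appendix A.3 of \cite{FKP}, while the right-hand sides, namely the real and $p$-adic spherical vectors in the Heisenberg model of $\pi_{min}$ of $E_8$, are written out in \cite{Kazhdan:2001nx} and \cite{KazhdanPolishchuk} respectively. No Fourier-expansion machinery of the type used in Theorems \ref{thm:orbit-exp}--\ref{thm:ntm-rep} is needed; the content is an identification of two closed-form answers.

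First I would factor the global Whittaker vector as an Euler product
\begin{equation}
W_{\psi_{\alpha_8}}(\chi_0, g) \;=\; \prod_{p \leq \infty} W_{\psi_{\alpha_8}, p}(\chi_0, g),
\end{equation}
which is legitimate because $\chi_0$ is the point at which $\Ind_B^G \chi$ has $\pi_{min} = \bigotimes_{p \leq \infty} \pi_{min, p}$ as a subrepresentation and because the Whittaker functional against the factorisable character $\psi_{\alpha_8}$ is unique up to scalar on each local factor $\pi_{min, p}$. This reduces the proposition to two separate local statements, the Archimedean identity in (i) and the non-Archimedean identity in (ii).

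Second I would recall the Heisenberg parabolic structure of $E_8$: the maximal parabolic $P_{\alpha_8} = L N_{\alpha_8}$ has Levi $L$ with derived group of type $E_7$ and unipotent radical $N_{\alpha_8}$ a $57$-dimensional Heisenberg group with one-dimensional centre $Z$ spanned by the highest root, and the simple root $\alpha_8$ is precisely the one whose coroot generates $Z$. Consequently $\psi_{\alpha_8}$ is non-trivial only on the centre of the Heisenberg, and in the Heisenberg model of $\pi_{min, p}$ used in \cite{Kazhdan:2001nx, KazhdanPolishchuk} this character picks out the abelian part of the spherical vector $f_p^{\circ}$. The required identification then becomes a place-by-place comparison: the Archimedean expression of \cite{FKP} is a $K$-Bessel-type function that matches the formula of \cite{Kazhdan:2001nx}, while the finite-$p$ expression of \cite{FKP} is a divisor-sum that matches the formula of \cite{KazhdanPolishchuk}. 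The main obstacle, which is the entire technical content of the proof, is the bookkeeping of conventions — normalisations of Haar measure and additive characters, the choice of polarisation of the Heisenberg group, and the Bourbaki versus alternative labelling of simple roots — after which the comparison is immediate and both (i) and (ii) follow in parallel with the analogous parts of Propositions \ref{prop:E6} and \ref{prop:E7}.
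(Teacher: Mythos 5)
Your overall plan — factor the adelic Whittaker vector place by place, look up the closed form from Table~A.3 of \cite{FKP}, and compare with the Kazhdan--Pioline--Waldron and Kazhdan--Polishchuk formulas — is exactly what the paper does; the Euler factorisation of the global answer proceeds by rewriting the divisor sum $\sigma_4(m)$ as $\prod_{p<\infty}(1-p^4|m|_p^{-4})/(1-p^4)$ and using the product formula to distribute the power of $|m|$ across places.

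However, there is a genuine structural error in the middle paragraph. You assert that $\alpha_8$ is ``precisely the one whose coroot generates $Z$'' and conclude that $\psi_{\alpha_8}$ is non-trivial only on the centre $Z=[U,U]$ of the Heisenberg group. This is false: in the 5-grading of $\mathfrak{e}_8$ by the coefficient of $\alpha_8$, the one-dimensional centre $\mathfrak{g}_2$ is spanned by the highest-root vector (the highest root of $E_8$ has $\alpha_8$-coefficient $2$), while $\alpha_8$ itself has coefficient $1$ and so its root vector sits in $\mathfrak{g}_1$, i.e.\ in a Lagrangian piece of the abelianisation $Z\backslash U$, not in $Z$. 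Your stated premise is also internally inconsistent with your conclusion: a character non-trivial on $Z$ would, by the Stone--von Neumann discussion in section~5.2.2, pick out the \emph{non-abelian} spherical vector $F^{na}_{U,p}$, precisely the opposite of what the proposition compares against. The correct statement, and the one the paper relies on, is that $\psi_{\alpha_8}$ is supported on a single one-parameter subgroup inside $\mathfrak{g}_1$ and is therefore trivial on $Z$; after the abelian limit $y\to0$ at $x_0\neq0$, this is exactly the variable $x_0$ in \eqref{KPvectors} and \eqref{KPWvectors}, which is why the comparison is made with the abelian spherical vectors. With this correction the rest of your plan goes through.
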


\noindent These propositions are proven in section \ref{sec_proofprop}.

Our results also have applications to the study of instanton effects in gravitational scattering amplitudes in string theory, and this will be studied in a follow-up publication.

\textit{Acknowledgements.} We are especially indebted to David Ginzburg for extensive correspondence on his method of associating Fourier coefficients with nilpotent orbits. Without his explanations this work would have been impossible. Special gratitude is owed to Philipp Fleig who contributed to this work at the early stages. 
We are also grateful to Ben Brubaker, Michael Green, Steve Miller, Manish Patnaik, Boris Pioline, Gordan Savin and Pierre Vanhove for valuable discussions.

\section{Degenerate Whittaker vectors and Fourier expansions}
\label{DegWhit}

\subsection{Fourier coefficients of Eisenstein series}
\label{sec:fourier-coeffs}
Let $G(\mathbb{Q})$ be an algebraic reductive group and $G(\mathbb{A})$ its adelization. For definiteness we restrict to the case when $G$ is semi-simple, simply-laced and split. Let $Q$ be a standard parabolic subgroup with Levi decomposition $Q=MR$, where $M$ is the Levi factor and $R$ is the unipotent radical. Fix a quasi-character $\chi : Q(\mathbb{A})\to \mathbb{C}^*$, trivial on $Q(\mathbb{Q})$, and extend it to all of $G$ by $\chi(mrk)=\chi(mr)$, with $m\in M, r\in R$, and $k$ an element of the maximal compact subgroup $K$. Attached to such a character we have the Langlands-Eisenstein series
\beq
E(\chi, g)=\sum_{\gamma\in Q(\mathbb{Q})\backslash G(\mathbb{Q})} \chi(\gamma g),
\eeq
which converges absolutely on a subspace of the space of $\chi$'s. Representation-theoretically this Eisenstein series is attached to the (degenerate) principal 
series 
\beq
\text{Ind}_{Q}^{G}\chi=\{f:G\to \mathbb{C}\, |\, f(mrg)=\chi(mr)f(g), \forall m\in M, r\in R\}.
\label{principalseries}
\eeq
The group $G$ acts on this space by right-translation: $[\rho(g)\cdot f](h)=f(hg)$ and consequently does not affect the automorphic invariance of $E(\chi, g)$ on the left. Functions in this space are determined by their restriction to the parabolic coset $Q\backslash G$, and hence the \emph{functional} (or \emph{Gelfand-Kirillov}) dimension of $\text{Ind}_{Q}^{G}\chi$ is generically given by the dimension of $Q\backslash G$.

We will be particularly interested in a special class of such Eisenstein series attached to certain $G$-representations of unusually small Gelfand-Kirillov dimension. To this end we let $Q$ be a \emph{maximal} parabolic subgroup of $G$. In this case the character $\chi$ can be parametrised by a single complex number $s$ and we denote it by $\chi_s$. For certain special values of $s$ the Gelfand-Kirillov dimension of $\text{Ind}_{Q}^{G}\chi_s$ reduces, and in particular for some $s=s_0$ it contains the minimal representation $\pi_{min}$ as a submodule. The minimal representation is distinguished by having the smallest possible non-trivial Gelfand-Kirillov dimension among all representations. For the exceptional groups $E_6, E_7, E_8$ this occurs for $s=3/2$ if we choose $Q$ to be the maximal parabolic subgroup parametrised by the first simple root $\alpha_1$ (in Bourbaki labelling) \cite{GRS,GreenSmallRep,Pioline:2010kb}. This implies that for the exceptional groups  $E(3/2, g)$ belongs to $\pi_{min}$. Similarly, one has that $E(5/2, g)$ is an element of the next-to-minimal representation \cite{GreenSmallRep}.

Returning for a moment to the general case of an arbitrary character $\chi$ we shall  now introduce a class of Fourier coefficients of $E(\chi, g)$. To this end let $B=NA$ be the Borel subgroup of $G$ with $A$ the Cartan torus and $N$ the unipotent radical. Let 
\beq
\psi : N(\mathbb{Q})\backslash N(\mathbb{A})\to U(1) 
\eeq
be a unitary character of $N(\mathbb{A})$, trivial on $N(\mathbb{Q})$. This character decomposes according to 
\beq
\psi=\psi_\infty \prod_{p< \infty} \psi_p,
\label{characterfactorization}
\eeq
where $\psi_p$ is a character on $N(\mathbb{Q}_p)$ (trivial on $N(\mathbb{Z}_p))$, while $\psi_\infty$ is a character on $N(\mathbb{R})$ (trivial on $N(\mathbb{Z}))$. Denote by $N_\alpha$ the restriction of $N$ to its one-parameter subgroup generated by $x_\alpha(u)=e^{uE_\alpha}$, $u\in \mathbb{A}$, where $E_\alpha$ is the Chevalley generator associated with the positive root $\alpha \in \Delta_+$. The restriction of $\psi$ to $N_\alpha$ then yields a character $\psi_\alpha : \mathbb{Q}\backslash \mathbb{A}\to U(1)$, with a similar factorisation as in (\ref{characterfactorization}). Any character $\psi$ on $N$ is determined by its values when restricted to the various $N_\alpha$ with $\alpha$ a simple root. We label the set of simple roots by $\Pi\subset \Delta_+\subset \Delta$. Hence, representing an arbitrary element $n\in N/[N,N]$ as $\prod_{\alpha\in \Pi} x_\alpha(u_\alpha)$ with $u_\alpha\in \mathbb{A}$, we can write an arbitrary character $\psi$ on $N$ as
\beq
\psi(n)= \e\left( \sum_{\alpha\in \Pi} m_\alpha u_\alpha\right)
\label{Ncharacter}
\eeq
with $\psi$ being completely determined by the choice of $m_\alpha\in \mathbb{Q}$ for $\alpha\in\Pi$ a simple root. When all $m_\alpha\neq 0$ we say that $\psi$ is \emph{generic}; otherwise it is \emph{degenerate}. The rationals $m_\alpha$ are sometimes called charges because of their physical interpretation; a name we will adopt here for convenience.

In the above equation we have used the notation
\begin{align}
\label{eNotation}
\e(x) = \exp(2\pi i x),
\end{align}
with the understanding that this is to be understood as an Euler product and for $p<\infty$ the local expression is given by $\exp(2\pi i [x]_p)$ with $[x]_p$ denoting the fractional part of $x$ (the class of $x$ in $\mathbb{Q}_p/\mathbb{Z}_p$).

For a character $\psi$ we  define the \textit{Whittaker vector} of $E(\chi, g)$ along $N$:
\beq
\label{Wvector}
W_\psi(\chi, g)=\int_{N(\mathbb{Q})\backslash N(\mathbb{A})} E(\chi, ng)\overline{\psi(n)}dn. 
\eeq
By construction this satisfies 
\beq
\label{Wfunc}
W_\psi(\chi, nak)=\psi(n)W_\psi(a), 
\eeq 
and so is determined by its restriction to the torus $A$. The $K$-invariance is inherited from that of $E(\chi, g)$. Smooth functions on $G$ with this property are associated with the induced representation 
\beq
\text{Ind}_{N}^{G} \psi=\{ f:G\to \mathbb{C} \, |\, f(ng)=\psi(n)f(g), \forall n\in N\}. 
\eeq

The image of the map $Wh : \text{Ind}_{Q}^{G}\chi \to \text{Ind}_{N}^{G} \psi$ is known as a \emph{Whittaker model}. Much is known about how to compute the Whittaker vector $W_\psi(\chi, g)$; in particular it can be shown that for generic $\psi$ it reduces to the following expression
\beq
\label{Wgeneric}
W_\psi(\chi, a)=\int_{N(\mathbb{A})} \chi(w_0 n a)\overline{\psi(n)}dn, 
\eeq
where $w_0$ is the longest element in the Weyl group of $\mathfrak{g}$~\cite{Jacquet1967} (see also \cite{FGKP} for a discussion). This implies that $W$ is \emph{Eulerian}, i.e. factories into an Euler product 
\beq
W_\psi(\chi, g)=\prod_{p\leq \infty}W_{\psi_p}(\chi, g),
\eeq
where each local factor can be computed separately. In fact, for all finite primes $p<\infty$ a complete formula exists, known as the \emph{Casselman-Shalika formula} \cite{CasselmanShalika}.

More generally, one can consider Fourier coefficients of $E(\chi, g)$ associated with parabolic subgroups other than the Borel. To this end pick another standard parabolic subgroup $P$ with  $B\subset P\subset G$ and consider its Levi decomposition $P=LU$ with unipotent radical $U$ and Levi factor $L$. Let $\psi_U : U(\mathbb{A}) \to U(1) $ be a unitary character on $U(\mathbb{A})$, trivial on $U(\mathbb{Q})$. We can now consider the following Fourier coefficient of $E(\chi, g)$: 
\begin{align}
F_{\psi_U}(\chi,g) = \int\limits_{U(\mathbb{Q})\backslash U(\mathbb{A})} E(\chi,ug) \overline{\psi_U(u)}du.
\label{Fcoefficient}
\end{align}
This coefficient is determined by its values on $l\in L$ and satisfies
\begin{align}
F_{\psi_U}(\chi,ulk) = \psi_U(u) F_{\psi_U}(\chi,l),
\end{align}
where the $K$-invariance is inherited from $E(\chi, g)$. In contrast to the case of Whittaker vectors, discussed above, much less is known about explicit formulas for $F_{\psi_U}(\chi,g)$. In particular, in general one does not expect $F_{\psi_U}(\chi,g)$ to be Eulerian since it is generically given by a non-trivial sum over Weyl orbits; hence there is no direct analogue of the Casselman-Shalika formula. One of the main purposes of this article is to show that in certain situations, in particular for special choices of $\chi$ corresponding to small automorphic representations, one can in fact evaluate $F_{\psi_U}$ explicitly. To pave the way for this, let us first make the following well-known, but crucial, observation that can for instance be found in~\cite{MillerSahi}. 

Under the action of an element $\gamma\in L(\mathbb{Q}) = L(\mathbb{A})\cap G(\mathbb{Q})$ we have
\begin{align}
    \label{eq:L-conjugation}
F_{\psi_U}(\chi,\gamma l) &= \int\limits_{U(\mathbb{Q})\backslash U(\mathbb{A})} E(\chi,u\gamma l) \overline{\psi_U(u)}du\nn\\
&= \int\limits_{U(\mathbb{Q})\backslash U(\mathbb{A})} E(\chi,\gamma ul) \overline{\psi_U(\gamma u\gamma^{-1})}du\nn\\
&= F_{\gamma\cdot \psi_U}(\chi,l),
\end{align}
where we have used the invariance of $du$ under the $L(\mathbb{Q})$ action and defined the transformed character by 
\beq
\big(\gamma\cdot \psi_U\big)(u):=\psi_U(\gamma u \gamma^{-1}).
\eeq
This implies that  the Fourier coefficients come in $L(\mathbb{Q})$-orbits acting on the space of characters $\psi_U$. This space can be identified with a subspace of $\mathfrak{u}^*$, the dual of the nilpotent Lie algebra $\mathfrak{u}$ of $U$. 

Using this identification, we may relate the different $L$-orbits to $G$-orbits of nilpotent elements. To this end we shall now recall some basic facts about nilpotent orbits that will play an important role in what follows.

\subsection{Basics of nilpotent orbits}
\label{nilpotentorbits}

In this section, we review some standard facts about nilpotent orbits of a complex semisimple Lie algebra. Two standard references on the subject are the books by Collingwood-McGovern \cite{CollingwoodMcGovern} and Carter \cite{Carter}. 

Let $G$ be a complex semisimple Lie group and $\mathfrak{g}$ its associated Lie algebra. A nilpotent orbit of $G$ is the orbit of any nilpotent element $X\in \mathfrak{g}$ under the adjoint action of $G$:
\begin{align}
\mathcal{O}_X = \{g X g^{-1} \big| \forall g\in G\}.
\end{align}
If one is interested in orbits under the action of a real or discrete subgroup of the complex Lie group, these orbits typically will have to be further subdivided into finer orbits.

According to the Jacobson-Morozov theorem, one can associate to each orbit $\mathcal{O}_X$ a triple $(X,Y,H)$ satisfying the standard $\mathfrak{sl}(2)$-relations 
\begin{align}
[X,Y]=H, \qquad [H,X]=2X, \qquad [H,Y]=-2Y.
\end{align}
The classification of nilpotent orbits is therefore equivalent to the classification of embeddings $\mathfrak{sl}(2) \to \mathfrak{g}$. We shall often omit the subscript $X$ indicating the base point and simply write $\mathcal{O}$ for a nilpotent orbit.

 A convenient way of characterising an orbit is through its \emph{weighted Dynkin diagram}. This is simply the Dynkin diagram of $\mathfrak{g}$ with nodes labelled by the eigenvalues of the corresponding simple roots with respect to the Cartan generator $H$; a famous result asserts that these numbers can only be in the set $\{0,1,2\}$ (see \cite{CollingwoodMcGovern}). Another way of classifying the orbits is via their \emph{Bala-Carter labels}, which is the Dynkin-type of the unique minimal Levi subgroup $L_{\text{min}} \subset G$ which contains  $X$ as a \emph{distinguished} nilpotent element~\cite{BalaCarterI,BalaCarterII}. Here, `distinguished' refers to a grading of $\lie{l}=\mathrm{Lie}(L)=\oplus_i \lie{l}(i)$ by $H$ such that $\dim\lie{l}(0)=\dim\lie{l}(2)$.

Each Jacobson-Morozov triple $(X,Y,H)$ gives rise to a grading of the Lie algebra: 
\begin{align}
\mathfrak{g}=\mathfrak{g}(0)\oplus \bigoplus_{i=1}^m (\mathfrak{g}(i)\oplus \mathfrak{g}(-i))
\label{JMgrading}
\end{align} 
where $m$ is a number that depends on $\mathcal{O}_X$ through its weighted Dynkin diagram, and the graded pieces are defined as 
\begin{align} 
\mathfrak{g}(i)=\{x\in \mathfrak{g}\, |\, [H,x]=ix\}.
\end{align}
 The non-negative part gives rise to a parabolic subgroup $P_\mathcal{O}$ with unipotent radical $U_\mathcal{O}$ corresponding to the positive $H$-eigenvalues: $U_\mathcal{O}=\prod_{i=1}^{m}U_i$. The parabolic $P_\mathcal{O}$ can be read off from the weighted Dynkin diagram as follows. The Levi part $L_\mathcal{O}$ is generated by the maximal torus of $G$ and all copies of $SL(2)$ corresponding to those nodes in the weighted Dynkin diagram which are labelled by zeroes. The dimension of a given orbit is then determined by the formula $\dim \Oh = \dim \mathfrak{g} - \dim \mathfrak{g}(0) - \dim \mathfrak{g}(1)$.

\begin{example}
\label{E7minorb}
As an example we consider the minimal orbit of $G=E_7$. In Bourbaki labelling the weighted Dynkin diagram is $[1,0,0,0,0,0,0]$ and the semi-simple part of the Levi subgroup $L_\Oh$ is $D_6$. The graded pieces (of non-negative degree) are
\begin{align}
\mathfrak{g}(0) &= \mathfrak{so}(6,6) \oplus \mathfrak{gl}(1),\nn\\
\mathfrak{g}(1) &={\bf 32},\nn\\
\mathfrak{g}(2) &= {\bf 1},
\end{align}
where we have labelled positive degree pieces by their $\mathfrak{so}(6,6)$ representations. The dimension of the associated nilpotent orbit is $\dim\Oh_{\textrm{min}} = 133-67-32 = 34$. The nilpotent element of the Jacobson--Morozov triple can be chosen to be $X=E_{\alpha_1}$, the step operator of the first simple root. The Bala--Carter label of this orbit is $A_1$ (as for all minimal orbits of simply-laced groups) and the minimal Levi subgroup $L_{\textrm{min}}$ with semi-simple part $SL(2)$ is just the one associated with the first simple root. Clearly, $X=E_{\alpha_1}$ is a distinguished element in this group. 
\end{example} 
 
Associated with $(P_\mathcal{O}, L_\mathcal{O}, U_\mathcal{O})$ we also have the corresponding  Lie algebras $(\mathfrak{p}_\mathcal{O}, \mathfrak{l}_\mathcal{O}, \mathfrak{u}_\mathcal{O})$  that can be read off from the grading of $\mathfrak{g}$:
\begin{align} 
\mathfrak{p}_\mathcal{O}= \sum_{i\geq 0}\mathfrak{g}(i), \quad \mathfrak{u}_\mathcal{O}=\sum_{i>0}\mathfrak{g}(i), \qquad \mathfrak{l}_\mathcal{O}=\mathfrak{g}(0).
\end{align}

The parabolic subgroup $P_\mathcal{O}$ is uniquely determined by the nilpotent element $X$ which defines the orbit $\mathcal{O}$. 

To each orbit we also associate a stabilizer type in the following manner. Let $X$ be an element of a nilpotent orbit $\Oh$ and part of a Jacobson-Morozov triple $(X, Y, H)$ associated to that orbit such that $H \in \lie h$. Let $C_G(X) = \{g \in G : gXg^{-1} = X\}$ be its centralizer. Then the connected component $C_G(X)^0$ can be factorised into $C_G(X)^0 = R C$ where $R$ is the unipotent radical of $C_G(X)^0$ and $C = C_G(X)^0 \cap C_G(H)$ is a connected reductive group \cite{Carter}. 
While the reductive group $C$ depends on the representative $X$ in $\Oh$, the type of $C$ does not and it is this type that we associate to the orbit $\Oh$.

Finally, there exists a partial order on the space of all orbits defined by $\mathcal{O}\leq \mathcal{O}'$ if and only if $\overline{\mathcal{O}} \subset \overline{\mathcal{O}'}$ where the closure is defined using the Zariski topology. The partial order allows to arrange the nilpotent orbits on a Hasse diagram~\cite{CollingwoodMcGovern}.

\subsection{Degenerate Whittaker vectors and Fourier coefficients}
\label{sec:deg-Whittaker}

We shall now apply the formalism of nilpotent orbits to Fourier coefficients of Eisenstein series and relate it to degenerate Whittaker vectors. 

Let $P$ be a parabolic subgroup with unipotent radical $U$ and Levi subgroup $L$. As was recalled in~\eqref{eq:L-conjugation} the Fourier coefficients $F_{\psi_U}$ of an automorphic form $\varphi$ form orbits under $L(\rats)$, as do the characters $\psi_U$ on $U(\mathbb{A})$ trivial on $U(\mathbb{Q})$. Similar to~\eqref{Ncharacter}, we can describe the characters $\psi_U$ in terms of elements $\omega\in\mathfrak{u}^*$ such that for $u=\exp(X)$ for some $X\in \mathfrak{u}$ one has 
\begin{align}
\psi_U(u) = \e (\omega ( X ) ).
\end{align}
The element $\omega$ labels the charges dual to the coordinates on $U$. Using the Killing form one can equally regard it as an element of the Lie algebra $\mathfrak{u}$ and it is a nilpotent element. The Fourier coefficients $F_{\psi_U}$ are therefore related along $L(\rats)$-orbits of nilpotent elements in the radical. It is at this stage convenient to use a slightly coarser description and complexify all groups and Lie algebras.

Since $L\subset G$, the $L$-orbit of a character $\psi_U$ is contained in some nilpotent $G$-orbit. In order to determine whether a certain Fourier coefficient $F_{\psi_U}$ is supported by the wavefront set of an automorphic function $\varphi$, it is therefore important to know which $L$-orbits on $\mathfrak{u}$ exist and which nilpotent $G$-orbits they are contained in. This information has been computed recently by Miller and Sahi for all simple groups in the complexified setting~\cite{MillerSahi}.

\begin{example}
\label{SL4NTM}
In anticipation of our analysis below, we illustrate this for $G=SL(4)$. If we choose the maximal parabolic $P$ associated to the middle node of the $A_3$ Dynkin diagram, we have that $P=LU$ with $\mathfrak{l}= \mathfrak{sl}(2)\oplus\mathfrak{sl}(2)\oplus \mathfrak{gl}(1)$ and four-dimensional $\mathfrak{u}= ({\bf 2},{\bf 2})$ that can be thought of as the upper right $(2\times 2)$-block of a fundamental $SL(4)$-matrix. There are two non-trivial $L$-orbits on $\mathfrak{u}$ that can be represented by the $(2\times 2)$ matrices
\begin{align}
\begin{pmatrix}0&0\\1&0\end{pmatrix}\quad\textrm{and}\quad
\begin{pmatrix}0&1\\1&0\end{pmatrix}.
\end{align}
The first one lies in the (minimal) nilpotent $G$-orbit with Bala--Carter label $A_1$ (since its embedding in $\mathfrak{g}$ is the simple Chevalley generator of the middle node) and the second matrix lies in the (next-to-minimal) nilpotent $G$-orbit with label $2A_1$.
\end{example}

We will be particularly interested in the case of maximal parabolic subgroups $P$ of $G$, that is, parabolic subgroups such that the associated nilpotent subalgebra $\lie u \subset \lie g$ only contains a single simple root $\alpha$.  
As we know that $\mathfrak{u}$ contains a single simple root $\alpha$, we can consider the character $\psi_U$ which has support only on this root. As explained in \cite{MillerSahi}, such a representative is readily obtained by restricting a generic character $\psi$ of $N$ to the unipotent $U$. Since $\psi$ is determined by its values on the simple roots $\alpha \in \Pi$, and $U$ only contains a single simple root $\alpha$, we can view $\psi_{U}$ as the restriction of a (maximally) degenerate character $\psi_{\alpha}:N\to U(1)$ to $U \subset N$, according to
\begin{equation}
    \psi_U = \psi_\alpha\big|_U 
    \label{characterrestrict}
\end{equation}
and is therefore effectively a periodic function $\rats \bs \ads \to U(1)$ that only depends on a single variable. From the point of view of (\ref{Ncharacter}) this corresponds to a $\psi=\psi_\alpha$ such that when evaluated on an arbitrary $n\in N$ all coefficients $m_{\alpha'}=0$ except when $\alpha'=\alpha$, the simple root defining the maximal parabolic. 

To such a degenerate character $\psi_\alpha$ one associates naturally the \emph{degenerate} Whittaker vector (cf.~\eqref{Wvector})
\begin{equation}
    \label{Wdeg}
    W_{\psi_\alpha}(\chi, g) = \intl[N(\rats)\backslash N(\ads)] E(\chi, ng) \overline{\psi_\alpha(n)} \, dn
\end{equation}
which, as all Whittaker vectors, is determined by its values on $A\subset L$, cf.~\eqref{Wfunc}. 

It is now natural to wonder whether in this situation there is any relation between the coefficient 
\begin{equation}
   F_{\psi_U}(\chi, g) = \intl[U(\rats) \bs U(\ads)] E(\chi, ug) \overline{\psi_U(u)} \, du
\end{equation}
and the degenerate Whittaker vector $W_{\psi_\alpha}$ in \eqref{Wdeg}. Clearly, the transformation law is quite different in general for the two functions; in particular, $F_{\psi_U}$ is determined by a function on $L(\mathbb{A})$ while  $W_{\psi_\alpha}$ is determined by a function only on the Cartan torus $A(\mathbb{A})\subset L(\mathbb{A})$. When $E(\chi, g)$ is in the minimal representation the support of $F_{\psi_U}$ is reduced and, remarkably, it turns out that under certain circumstances the two functions $F_{\psi_U}$ and $W_{\psi_\alpha}$ agree. An example for $SL(3)$ can be seen in \eqref{eq:SL3-Fourier-Whitt-agree}.

The first piece of information we need comes from the main theorem of Miller-Sahi \cite{MillerSahi}. The theorem states that for $G(\ads)$ being the adelization of either the exceptional group $E_6$ or $E_7$, an automorphic form $\varphi$ in the minimal representation $\pi_\text{min}$ is completely determined by the maximally degenerate Whittaker vectors on $N$. A version of this theorem for $G(\ads) = SL(3, \ads)$ and $G(\ads) = SL(4, \ads)$ has already been mentioned in the introduction and will be proven in sections \ref{sec:SL3} and \ref{sec:SL4}. 

In the proof of this theorem, Miller-Sahi start with a certain maximal parabolic subgroup $P$ of $G$ with abelian radical $U$ and express $\varphi$ as a sum of Fourier coefficients $F_{\psi_U}$. Using arguments from Matumoto and Mœglin-Waldspurger they conclude that in the minimal representation, only Fourier coefficients with $\psi$ being either trivial or $L(\rats)$-equivalent to $\psi|_U$ in \eqref{characterrestrict} are non-vanishing. By iteratively doing further expansions along the stabilizer of $\psi_U$ and applying Matumoto and Mœglin-Waldspurger they conclude that $F_{\psi_U}$ is an $L$-translate of the degenerate Whittaker vector with character $\psi_\alpha$ as defined in \eqref{characterrestrict}.

Our purpose is to generalise this process of expressing Fourier coefficients in terms of $L$-translates of Whittaker vectors that are easier to evaluate. At the same time we obtain a powerful method for determining whether certain Fourier coefficients of $\varphi$ vanish and when the two functions $F_{\psi_U}$ defined in \eqref{Fcoefficient} and $W_{\psi_\alpha}$ from \eqref{Wdeg} are actually equal. Our strategy is to combine the results of Miller-Sahi with the techniques developed by Ginzburg \cite{MR2214128} for attaching Fourier coefficient to nilpotent orbits. We shall also give evidence for a generalisation of Miller-Sahi's theorem to next-to-minimal automorphic representations where we consider characters $\psi$ supported on two orthogonal simple roots (type $2A_1$).

\subsection{Fourier coefficients, nilpotent orbits and small representations}
\label{sec:orbit-rep}

Ginzburg  has developed a method for canonically associating Fourier coefficients of automorphic forms with nilpotent orbits \cite{MR2214128} (see also \cite{MR3115211,MR3161096}). This gives a practical way of structuring the Fourier expansion, which is particularly useful for determining which coefficients vanish because of representation theoretic constraints. In this section we shall first outline this procedure, and then we shall apply it to the specific questions of interest in this paper. 

Following \cite{MR2214128} we denote by $U_{i\geq n}$ the subspace corresponding to all $H$-eigenvalues $i\geq n$. Denote by $V$ the subspace $U_{i\geq 2}$ which contains the Jacobson-Morozov representative $X$ of the nilpotent orbit $\mathcal{O}_X$, and let $\psi_V$ be a non-trivial character on $V/[V,V]$, trivial on $V/[V,V](\mathbb{Q})$ and fixed by a stabiliser $C(\psi_V)$ in $L_\Oh(\rats)$ of the same type as that of the stabilizer associated to the orbit $\mathcal{O}$. This last condition is quite important and limits the choice of character $\psi_V$ strongly. 

For example, when $\mathcal{O}$ is the regular orbit, then $P_\mathcal{O}=B$, the Borel subgroup, and the Levi decomposition $P_\mathcal{O}=L_\mathcal{O}U_\mathcal{O}$ is nothing but the Iwasawa decomposition $B=AN$, where $A$ is the Cartan torus and $N$ is the maximal unipotent radical generated by all the positive roots. In this case $V=N$ and hence $\psi_V$ is a  character on all of $N$. Moreover, the stabiliser $C$ of the regular orbit is trivial, and therefore the character $\psi_V$ must be a generic character on $N$, since these are the only characters with at most a trivial stabiliser. 

We shall now attach yet another object to each nilpotent orbit, namely a Fourier coefficient of an automorphic form. Let $\varphi$ be a vector in an automorphic representation $\pi$ of a reductive group $G(\mathbb{A})$. We then have: 

\vspace{.3cm}

\begin{definition}
    To each choice of data $(\pi, \varphi, \mathcal{O}, \psi^{\mathcal{O}}_V)$ we can associate a Fourier coefficient as follows:
    \begin{equation}
        F_\Oh(\varphi, \psi^{\mathcal{O}}_V; g) := \int_{V(\mathbb{Q})\backslash V(\mathbb{A})} \varphi(vg)\overline{\psi^{\mathcal{O}}_V(v)}dv.
        \label{OrbitFC}
    \end{equation}
\end{definition}

\vspace{.3cm} 

\begin{remark}
Note that (\ref{OrbitFC}) defines a function on $G(\mathbb{A})$ which is left-invariant under $C(\mathbb{Q})$. Hence, by restriction one can view it as a function on $C(\mathbb{Q})\backslash C(\mathbb{A})$. This does not, however, define an automorphic form on $C(\mathbb{A})$ since in general (\ref{OrbitFC}) fails to be $\mathcal{Z}(\mathfrak{c})$-finite \cite{MR3115211}.\footnote{$\mathcal{Z}(\mathfrak{c})$ denotes the centre of the universal enveloping algebra $\mathcal{U}(\mathfrak{c})$ of the Lie algebra $\mathfrak{c}$ of $C$.}
\end{remark}

\vspace{.3cm} 

We say that  a Fourier coefficient defined by (\ref{OrbitFC}) is attached to the orbit $\Oh$ and will for short call it an orbit coefficient. As a special case we have for the trivial orbit that
\begin{align}
F_{\Oh_0} (\varphi;g) = \int_{N(\rats)\bs N(\ads)} \varphi(ng) dn
\end{align}
yields the constant term of $\varphi$. For Eisenstein series $E(\chi,g)$, we will also denote the constant term by $E_0(\chi,g)$.

The virtue of the above definition of an orbit coefficient is that it allows to use representation theory to assert whether certain Fourier coefficients vanish or not. 
For example, in the case of a minimal representation $\pi_\text{min}$ it is known by theorems of Matumoto \cite{MR892192} (at the real place $p=\infty$) and Moeglin-Waldspurger \cite{MR913667} (at the finite places $p<\infty$) that the only coefficients which are non-zero are those attached to the trivial orbit $\mathcal{O}_0$ and the smallest non-trivial orbit $\mathcal{O}_\text{min}$. See the Remark in the introduction for a discussion of these results. As we shall see momentarily, the formalism above allows to immediately write down these non-vanishing coefficients.

\section{\texorpdfstring{$SL(3)$}{SL(3)} Eisenstein series}
\label{sec:SL3}

We shall now illustrate the formalism in the special case of Eisenstein series on $SL(3, \mathbb{A})$. To this end let $\chi : B(\mathbb{Q})\backslash B(\mathbb{A})\to \mathbb{C}^{\star}$ be a quasi-character defined by $\chi(b)=\chi(na)=\chi(a)$, and extended to all of $G$ by right-invariance under $K$. The function on $G$ so obtained is the unique (up to scaling) spherical vector of the non-unitary principal series $\text{Ind}_{B(\mathbb{A})}^{G(\mathbb{A})}\chi$. With a slight abuse of notation we shall denote the spherical vector by the same letter $\chi$ as for the inducing character. Let $\pi^\chi$ be an automorphic realisation of this representation. The image of the spherical vector $\chi$ inside $\pi^\chi$ is given by the Eisenstein series 
\begin{equation}
    E(\chi, g)=\sum_{\gamma\in B(\mathbb{Q})\backslash G(\mathbb{Q})} \chi(\gamma g).
\end{equation}

On general grounds the total Fourier expansion of $E(\chi, g)$ takes the form
\begin{equation}
    E(\chi, g)= E_0(\chi, g)+\sum_{\psi_N\neq 1} W_{\psi_N}(\chi, g) + \sum_{\psi_{\mathcal{Z}}\neq 1} W_{\psi_\mathcal{Z}} (\chi, g),
    \label{GeneralFE}
\end{equation}
where $\psi_N$ is a unitary character on $N(\ads)$, trivial on $N(\rats)$, and $\psi_{\mathcal{Z}}$ is a generic character on the centre $\mathcal{Z}(\ads)=[N(\ads), N(\ads)]$, trivial on $\mathcal{Z}(\mathbb{Q})$.\footnote{This is the Heisenberg grading of $SL(3)$.} Note that $\psi_N$ is necessarily also trivial on $\mathcal{Z}$ and so restricts to a character on the abelianization $\mathcal{Z}\backslash N$. The (non-constant) coefficients are defined by the integrals
\begin{align}
W_{\psi_N}(\chi, g) &= \int_{N(\mathbb{Q})\backslash N(\mathbb{A})} E(\chi, ng)\overline{\psi(n)}dn, 
\nn \\ 
W_{\psi_\mathcal{Z}}(\chi, g) &= \int_{\mathcal{Z}(\mathbb{Q})\backslash \mathcal{Z}(\mathbb{A})} E(\chi, zg)\overline{\psi_{\mathcal{Z}}(z)}dz.
\end{align}

We note that the first two terms in \eqref{GeneralFE} sum to the remaining Whittaker vector on $\mathcal Z$ with trivial character $\psi_\mathcal{Z}=1$. We call $W_{\psi_N}$ an \emph{abelian} coefficient and $W_{\psi_Z}$ a \emph{non-abelian}  coefficient. Explicit formulas for the coefficients when $g=(g_\mathbb{R}; 1, 1, \dots, 1), \, g_\mathbb{R}\in SL(3,\mathbb{R}),$ can be extracted from Bump's thesis \cite{Bump} or Vinogradov-Takhtajan's paper \cite{VT}. 

For the generic principal series with Gelfand-Kirillov dimension
\beq
\GKdim(\pi^\chi) = \frac{\dim G -\rank G}{2} = \dim N = 3,
\eeq
all these abelian and non-abelian Fourier coefficients are non-vanishing. 

Using the duality between characters $\chi$ and complex weights $\lambda \in \mathfrak{h}^{\star}\otimes \mathbb{C}\cong \mathbb{C}^2$ ($\mathfrak{h}=\text{Lie}\, A$) we can parametrize $\pi^\chi$ by two complex numbers $(s_1, s_2)\in \mathbb{C}^2$ (in a suitable basis of fundamental weights). For certain special loci in the parameter space the Gelfand-Kirillov dimension of $\pi^\chi=\pi^{s_1, s_2}$ reduces. In particular, we can consider the sub-representation $\pi_{\text{min}}=\pi^{s, 0}$ which is the minimal representation (or degenerate principal series) of Gelfand-Kirillov dimension
\beq
\GKdim(\pi_{\text{min}})=2.
\eeq
In this situation it turns out that the Fourier coefficients simplify~\cite{Pioline:2009qt}. 

Generally, we can parametrize the character $\psi_N$ explicitly by:
\begin{equation}
    \psi_N(n)=\psi_N\left(e^{xE_{\alpha_1}+yE_{\alpha_2}}\right)=\e(px+qy)\equiv e^{2\pi i (px+qy)}, \quad (p,q)\in \mathbb{Q}^2, \quad (x,y)\in \mathbb{A}^2,
\end{equation}
where $E_{\alpha_1}$ and $E_{\alpha_2}$ are the standard Chevalley generators of the Lie algebra $\mathfrak{sl}(3)$ associated with the simple roots $\alpha_1, \alpha_2$. The character does not depend on the one-parameter subgroup associated with the third (non-simple) positive root $\alpha_1+\alpha_2$ since $x_{\alpha_1+\alpha_2}(u)\in [N,N]$ and $\psi_N$ is therefore trivial on such elements. To emphasize the dependence on $p,q$ we sometimes denote the character $\psi_N$ by $\psi_{p,q}$. When both $p$ and $q$ are non-zero we call the character $\psi_{p,q}$ \emph{generic} and when either $p=0, q\neq0$ or $p\neq 0, q=0$ we call $\psi_{0,q}$ (or $\psi_{p,0}$) \emph{degenerate}. 

For the minimal representation $\pi_\text{min} = \pi^{s,0}$ one then finds that all the abelian coefficients associated with generic characters vanish:
\begin{equation}
W_{\psi_{p,q}}(s, 0; g)=0, \qquad \psi_{p,q} \, \, \text{generic} \quad (p\neq 0\,\text{and}\, q\neq 0). 
\label{minconstraint}
\end{equation}
In other words, the abelian Fourier coefficients are completely determined by the degenerate coefficients  $W_{\psi_{0,q}}(s, 0; g)$ and $W_{\psi_{p,0}}(s, 0; g)$. For the non-abelian coefficients there are no vanishing properties but the explicit form of $W_{\psi_Z}$ simplifies at each local place. See for instance sec. 3.4 and App. B of \cite{Pioline:2009qt} for an analysis of this.

\subsection{Fourier coefficients attached to nilpotent orbits}
We would now like to analyse the same Fourier expansion from the point of view of nilpotent orbits as outlined in the previous section. For $SL(3)$ there are three orbits: the principal (or regular) nilpotent orbit $\mathcal{O}_{\text{reg}}$ of dimension $6$ and Bala--Carter label $A_2$, the minimal orbit (or sub-regular orbit) $\mathcal{O}_{\text{min}}$ of dimension 4 and Bala--Carter label $A_1$, and the trivial orbit $\mathcal{O}_0$ of dimension 0 and label $0$. 

For the regular orbit $\mathcal{O}_{\text{reg}}$ we have $V=U_{i\geq1}=U_{i\geq2}\equiv N$, the unipotent radical of the Borel $B$, and hence the character $\psi^{\mathcal{O}_{\text{reg}}}_V$ coincides with the generic character $\psi_N : N(\mathbb{Q})\backslash N(\mathbb{A})\to U(1)$ introduced in the previous section. Therefore, in this case the coefficient (\ref{OrbitFC}) becomes 
\begin{equation}
    F_{\Oh_\text{reg}}(\chi, \psi^{\Oh_\text{reg}}_V; g) = \intl[N(\mathbb{Q})\backslash N(\mathbb{A})] E(\chi, ng)\overline{\psi_N(n)} dn, 
    \label{regularFC}
\end{equation}
and is identified with the generic part (i.e. $p q\neq 0$) of the abelian Fourier coefficient $W_\psi$ in \eqref{GeneralFE} since the stabilizer is trivial. 

For the minimal orbit $\mathcal{O}_{\text{min}}$ we have instead $V=U_{i\geq2}=\mathcal{Z}$ (and $U_{i\geq1}\neq U_{i\geq2}$), the center $[N,N]$  of the Heisenberg group $N$, and therefore the character $\psi_V^{\mathcal{O}_{\text{min}}}$ is simply the generic character $\psi_\mathcal{Z} : \mathcal{Z}(\mathbb{Q})\backslash \mathcal{Z}(\mathbb{A})\to U(1)$, and the Fourier coefficient (\ref{OrbitFC}) attached to the minimal orbit becomes
\begin{equation}
    F_{\Oh_\text{min}}(\chi, \psi_V^{\Oh_\text{min}}; g)=\intl[\mathcal{Z}(\mathbb{Q})\backslash \mathcal{Z}(\mathbb{A})] E(\chi, zg)\overline{\psi_\mathcal{Z}(z)}dz.
    \label{minFC}
\end{equation}
Hence, this is nothing but the non-abelian coefficient $W_{\psi_\mathcal{Z}}$ in (\ref{GeneralFE}). 

We are however still missing the degenerate abelian Fourier coefficients associated with characters $\psi_{p, 0}$ and $\psi_{0, q}$ on $N$. As we shall see below in~\eqref{OtherMin}, these are in fact 
also connected to the minimal orbit $\mathcal{O}_\text{min}$.  This is due to the fact that $U_{i\geq1}\neq U_{i\geq 2}$ for the minimal orbit and we must therefore take into account that one can have non-trivial characters along $U_{i\geq1}/U_{i\geq2}$. 

We will parametrize the element $z \in \mathcal{Z}$ with a parameter $x \in \ads$ as shown below and instead of considering a character $\psi_V$ on $V(\rats) \bs V(\ads)$ we will work with the (non-trivial) character $\e : \rats \bs \ads \to U(1)$ with $\e(x)=e^{2\pi i x}$ defined in~\eqref{eNotation}.

In this notation the Fourier coefficient attached to the minimal orbit (that is, the non-abelian coefficient) can be expressed as
\begin{equation}
    \label{eq:SL3-min-orbit-coeff}
    F_{\Oh_\text{min}}(\chi, m'; g) = 
    \intl_{\rats \bs \ads} E\Big(\chi,
    \begin{psmallmatrix}
        1 & & x \\
        & 1 & \\
        & & 1
    \end{psmallmatrix}
    g\Big) \overline{\e(m' x)} \, dx
\end{equation}
where the charge $m' \in \rats^*$ characterizes the non-trivial $\psi_V^{\Oh_\text{min}}$ \footnote{Henceforth, charges decorated with a prime will be assumed to be non-zero.}.

We note that, using the automorphic invariance of $E(\chi, g)$
\begin{equation}
\begin{split}
    F_{\Oh_\text{min}}\left(\chi, m';
    \begin{psmallmatrix}
        -1 & & \\
        & & -1 \\
        & -1 &
    \end{psmallmatrix} g \right) 
    &=
    \intl_{\rats \bs \ads} E\Big(\chi,
    \begin{psmallmatrix}
        1 & & x \\
        & 1 & \\
        & & 1
    \end{psmallmatrix}
    \begin{psmallmatrix}
        -1 & & \\
        & & -1 \\
        & -1 &
    \end{psmallmatrix} 
    g\Big) \overline{\e(m' x)} \, dx \\
    &=
    \intl_{\rats \bs \ads} E\Big(\chi,
    \begin{psmallmatrix}
        -1 & & \\
        & & -1 \\
        & -1 &
    \end{psmallmatrix}^{-1}
    \begin{psmallmatrix}
        1 & & x \\
        & 1 & \\
        & & 1
    \end{psmallmatrix}
    \begin{psmallmatrix}
        -1 & & \\
        & & -1 \\
        & -1 &
    \end{psmallmatrix} 
    g\Big) \overline{\e(m' x)} \, dx \\
    &=
    \intl_{\rats \bs \ads} E\Big(\chi,
    \begin{psmallmatrix}
        1 & x & \\
        & 1 & \\
        & & 1
    \end{psmallmatrix}
    g\Big) \overline{\e(m' x)} \, dx
\end{split}
\end{equation}

This means that we can connect the minimal orbit coefficient with the degenerate Whittaker vector $W_{\psi_{m',0}}$ by
\begin{multline}
\label{OtherMin}
    \intl_{(\rats \bs \ads)^2} 
    F_{\Oh_\text{min}}\left(\chi, m';
    \begin{psmallmatrix}
        -1 & & \\
        & & -1 \\
        & -1 &
    \end{psmallmatrix}
    \begin{psmallmatrix}
        1 & & u_2 \\
        & 1 & u_3 \\
        & & 1
    \end{psmallmatrix} g \right) \, d^2u = \\
    = \intl_{(\rats \bs \ads)^3} E\Big(\chi,
    \begin{psmallmatrix}
        1 & x & u_2' \\
        & 1 & u_3 \\
        & & 1
    \end{psmallmatrix}
    g\Big) \overline{\e(m' x)} \, dx \, d^2u
    = W_{\psi_{m', 0}}(\chi, g)
\end{multline}
where we have made the substitution $u_2 + u_3 x \to u_2'$.

We note that the vanishing of $F_{\Oh_\text{min}}$ implies the vanishing of the degenerate Whittaker vector $W_{\psi_{m',0}}$. When proving theorem \ref{thm:min-rep} for $SL(3)$ we will, for the minimal representation, be able to prove the converse by expressing $F_{\Oh_\text{min}}$ as a sum of Whittaker vectors $W_{\psi_{m', 0}}$.

\subsection{Expansion in nilpotent orbits and Theorem \ref{thm:orbit-exp} for \texorpdfstring{$SL(3)$}{SL(3)}}
\label{sec:SL3-orbit-expansion}
Similar to the way the Whittaker vector $W_{\psi_{m',0}}$ was related to $F_{\Oh_\text{min}}$ above, one can express all the Whittaker vectors in \eqref{GeneralFE} in terms of orbit coefficients.
\begin{equation}
\begin{split}
    \label{eq:SL3-Whittaker-as-orbits}
    W_{\psi_{m', n'}}(\chi, g) &= F_{\Oh_\text{reg}}(\chi, m', n'; g) \\
    W_{\psi_{m', 0}}(\chi, g) &= \intl_{(\rats \bs \ads)^2} 
    F_{\Oh_\text{min}}\left(\chi, m';
    \begin{psmallmatrix}
        -1 & & \\
        & & -1 \\
        & -1 &
    \end{psmallmatrix}
    \begin{psmallmatrix}
        1 & & u_2 \\
        & 1 & u_3 \\
        & & 1
    \end{psmallmatrix} g \right) \, d^2u \\
    W_{\psi_{0, n'}}(\chi, g) &= \intl_{(\rats \bs \ads)^2} 
    F_{\Oh_\text{min}}\left(\chi, n';
    \begin{psmallmatrix}
        & -1 & \\
        -1 & &  \\
        & & -1
    \end{psmallmatrix}
    \begin{psmallmatrix}
        1 & u_1 & u_2 \\
        & 1 & \\
        & & 1
    \end{psmallmatrix} g \right) \, d^2u \\
    W_{\psi_{k'}}(\chi, g) &= F_{\Oh_\text{min}}(\chi, k'; g)
\end{split}
\end{equation}
where the last Whittaker vector is over $\mathcal{Z}$ and $k' \in \rats^*$ characterizes $\psi_{\mathcal{Z}}$. The constant term $E_0(\chi, g)$ is attached to the trivial orbit. Recall that primed charges are assumed to be non-zero. From now on we will suppress the integration domain for brevity.  

\begin{proof}[\textbf{Proof of Theorem \ref{thm:orbit-exp} for $SL(3)$}]~\par
    Together with \eqref{GeneralFE}, the rewriting of the Whittaker vectors in \eqref{eq:SL3-Whittaker-as-orbits} proves Theorem \ref{thm:orbit-exp} for $SL(3)$; we can expand $E(\chi, g)$ in terms of orbit coefficients.

By regrouping the summations in \eqref{GeneralFE} we get that
\begin{equation}
    E(\chi, g) = \mathcal F_{\Oh_0}(\chi, g) + \mathcal F_{\Oh_\text{min}}(\chi, g) + \mathcal F_{\Oh_\text{reg}}(\chi, g)
\end{equation}
with
\begin{equation}
    \begin{split}
        \mathcal F_{\Oh_0}(\chi, g) &= E_0(\chi, g) \\
        \mathcal F_{\Oh_\text{min}}(\chi, g) &= \sum_{m' \neq 0} \left(W_{\psi_{m', 0}}(\chi, g) + W_{\psi_{0,m'}}(\chi, g) \right) + \sum_{k' \neq 0} W_{\psi_{k'}}(\chi, g) \\
        \mathcal F_{\Oh_\text{reg}}(\chi, g) &= \suml[m',n' \neq 0] W_{\psi_{m',n'}}(\chi, g) 
    \end{split}
\end{equation}
where each $\mathcal F_\Oh$ is linearly determined by $F_\Oh$ coefficients as seen in \eqref{eq:SL3-Whittaker-as-orbits}.
\end{proof}

\subsection{Example for Fourier coefficient on maximal parabolic subgroup}

Other Fourier coefficients on general parabolic subgroups can be connected to the orbit coefficients in a similar way. Let us consider, for example, the (maximal) parabolic subgroup $P$ with unipotent subgroup
\begin{equation}
    U = \left\{ 
    \begin{psmallmatrix}
        1 & u_1 & u_2 \\
        & 1 & \\
        & & 1
    \end{psmallmatrix} : u_i \in \ads \right\}
\end{equation}
and the Fourier coefficient
\begin{equation}
    \label{eq:SL3-parabolic-coeff}
    F_U(\chi, m_1, m_2; g) = \int E\left(\chi,
    \begin{psmallmatrix}
        1 & u_1 & u_2 \\
        & 1 & \\
        & & 1
    \end{psmallmatrix} g\right) \overline{\e(m_1 u_1 + m_2 u_2)} \, d^2u 
\end{equation}
where we assume that $m_1, m_2 \in \rats$ not both zero.

As discussed at the end of section \ref{sec:fourier-coeffs} we can use conjugation by elements in $L(\rats)$ to transform the character into a simpler form
\begin{equation} 
    \label{eq:SL3-FU}
    F_U(\chi, m_1, m_2; g) = F_U(\chi, 0, m'; l_U g) = \int E\left(\chi,
    \begin{psmallmatrix}
        1 & u_1 & u_2 \\
        & 1 & \\
        & & 1
    \end{psmallmatrix} l_U g\right) \overline{\e(m' u_2)} \, d^2u 
\end{equation}
where
\begin{equation}
    \label{eq:SL-FU-l}
    \left\{
        \begin{aligned}
            l_U &=
            \begin{psmallmatrix}
                1 & & \\
                & 1 & -\frac{m_1}{m_2}\\
                & & 1
            \end{psmallmatrix}
            & m' = m_2
            & \qquad \text{ if } m_2 \neq 0 \\
            l_U &= 
            \begin{psmallmatrix}
                -1 & & \\
                & & -1 \\
                & -1 &
            \end{psmallmatrix}
            & m' = m_1
            & \qquad \text{ if } m_2 = 0
        \end{aligned}
    \right.
\end{equation}

The first case is seen by conjugating the prefactor of $g$ with $l_U$ and making the variable change $u_2 + \frac{m_1}{m_2}u_1 \to u_2$ while the second case simply switches $u_1$ and $u_2$.\footnote{See \eqref{eq:SL3-Fmin-expansion-step} for a similar, more detailed derivation.}

Now we can connect this Fourier coefficient to the minimal orbit coefficient by
\begin{equation}
    \label{eq:SL3-FU-canonical}
    F_U(\chi, 0, m'; l_U g) = \int F_{\Oh_\text{min}}\left( \chi, m';  
        \begin{psmallmatrix}
            1 & u_1 & \\
            & 1 & \\
            & & 1
        \end{psmallmatrix} l_U g \right) \, du_1 \, .
\end{equation}

In the next section we will see how we can relate this Fourier coefficient with Whittaker vectors on $N$.

\subsection{The minimal representation and Theorem \ref{thm:min-rep} for \texorpdfstring{$SL(3)$}{SL(3)} }

From section \ref{sec:orbit-rep} we know that when $E(\chi, g)$ is in the minimal representation, the only orbit coefficients that do not vanish are those attached to $\Oh_\text{min}$ or $\Oh_\text{0}$ (the trivial orbit). This motivates us to take a closer look at $F_{\Oh_\text{min}}$, which also simplifies when in the minimal representation. We illustrate this by expanding $F_{\Oh_\text{min}}$ itself into different, conjugated, orbit coefficients. 

We Fourier expand the integrand in \eqref{eq:SL3-min-orbit-coeff} as
\begin{equation}
    \label{eq:SL3-min-integrand}
    \begin{split}
        \MoveEqLeft
        E \left( \chi,
            \begin{psmallmatrix}
                1 & & x_1 \\
                & 1 & \\
                & & 1
            \end{psmallmatrix}
            g \right) \overline{\e(m_1' x_1)}
        = \\
        =& \sum_{m_2 \in \rats} \, \int E \left( \chi,
            \begin{psmallmatrix}
                1 & x_2 & \\
                & 1 & \\
                & & 1
            \end{psmallmatrix}
            \begin{psmallmatrix}
                1 & & x_1 \\
                & 1 & \\
                & & 1
            \end{psmallmatrix}
            g \right) \overline{\e(m_1' x_1)} \, \overline{\e(m_2 x_2)} \, dx_2 \\
        =&
        \sum_{m_2 \in \rats} \, \int E \left( \chi, 
            \begin{psmallmatrix}
                1 & x_2 & x_1 \\
                & 1 & \\
                & & 1
            \end{psmallmatrix}
            g \right) \overline{\e(m_1' x_1 + m_2 x_2)} \, dx_2
    \end{split}
\end{equation}

Similar to the parabolic Fourier coefficient example above in \eqref{eq:SL3-parabolic-coeff} we can conjugate the prefactor of $g$ with
\begin{equation}
    l(m_1',m_2) = 
    \begin{psmallmatrix}
        -1 & & \\
        & & -1 \\
        & -1 & m_2/m_1'
    \end{psmallmatrix}
\end{equation}
and use the automorphic invariance of $E$ to get from \eqref{eq:SL3-min-integrand} inserted into \eqref{eq:SL3-min-orbit-coeff} that
\begin{equation}
    \label{eq:SL3-Fmin-expansion-step}
    \begin{split}
        \MoveEqLeft
        F_{\Oh_\text{min}}(\chi, m_1', g) = \\
        &=
        \sum_{m_2} 
        \int E\left(\chi,
        \begin{psmallmatrix}
            1 & x_2 & x_1 \\
            & 1 & \\
            & & 1
        \end{psmallmatrix}
        g\right) \overline{\e(m_1' x_1 + m_2 x_2)} \, dx_1 dx_2 \\
        &=
        \sum_{m_2} 
        \int E\left(\chi, l(m_1', m_2)
        \begin{psmallmatrix}
            1 & x_2 & x_1 \\
            & 1 & \\
            & & 1
        \end{psmallmatrix} l(m_1', m_2)^{-1} l(m_1', m_2)
        g\right) \overline{\e(m_1' x_1 + m_2 x_2)} \, d^2x \\
        &=
        \sum_{m_2}
        \int E\left(\chi,
        \begin{psmallmatrix}
            1 & x_1 + m_2 x_2 / m_1' & x_2 \\
            & 1 & \\
            & & 1
        \end{psmallmatrix}
        l(m_1', m_2) g\right) \overline{\e(m_1' x_1 + m_2 x_2)} \, d^2x \\
        &=
        \sum_{m_2}
        \int E\left(\chi,
        \begin{psmallmatrix}
            1 & x_1 & x_2 \\
            & 1 & \\
            & & 1
        \end{psmallmatrix}
        l(m_1', m_2) g\right) \overline{\e(m_1' x_1)} \, d^2x
    \end{split} 
\end{equation}
where, in the last step, the substitution $x_1 + m_2 x_2 / m_1' \to x_1$ has been made.

We make a further Fourier expansion to obtain
\begin{equation}
    F_{\Oh_\text{min}}(\chi, m_1'; g) = 
    \sum_{m_2,m_3}
    \int E\left(\chi,
    \begin{psmallmatrix}
        1 & x_1 & x_2 \\
        & 1 & x_3 \\
        & & 1
    \end{psmallmatrix}
    l(m_1', m_2) g\right) \overline{\e(m_1' x_1 + m_3 x_3)} d^3x
\end{equation}

We note that for the terms with $m_3 \neq 0$ we can identify $\e(m_1' x_1 + m_3 x_3)$ with a generic character on $N$. Thus, these terms are attached to $\Oh_\text{reg}$.
\begin{equation}
    \label{eq:SL3-orbit-sum}
    \begin{split}
        \MoveEqLeft
        F_{\Oh_\text{min}}(\chi, m_1'; g) = \\
        &= \sum_{m_3' \neq 0} \sum_{m_2} F_{\Oh_\text{reg}}(\chi, m_1', m_3'; l(m_1', m_2) g) 
        +{} \\ &\quad + 
        \sum_{m_2}
        \int E\left(\chi,
        \begin{psmallmatrix}
            1 & x_1 & x_2 \\
            & 1 & x_3 \\
            & & 1
        \end{psmallmatrix}
        l(m_1', m_2) g\right) \overline{\e(m_1' x_1)} d^3x \\
        &= \sum_{m_2, m_3'} F_{\Oh_\text{reg}}(\chi, m_1', m_3'; l(m_1', m_2) g) 
        +
        \sum_{m_2}
        W_{\psi_{m_1',0}}(\chi; l(m_1', m_2) g)
    \end{split}
\end{equation}

We note that when summing $l(m_1', m_2)$ over $m_2$ (keeping $m_1'$ fixed) we might as well take the sum of $l(n) = l(1, n)$ over $n \in \rats$.

\begin{proof}[\textbf{Proof of Theorem \ref{thm:min-rep} for $SL(3)$}]~\par
    If $E(\chi, g)$ is in the minimal representation (i.e. $\chi = \chi_\text{min}$), all orbit coefficients attached to nilpotent orbits outside the closure $\overline{\Oh_\text{min}}$ (that is, those attached to $\Oh_\text{reg}$) vanish using the arguments of section \ref{sec:orbit-rep}.
 
    The first sum in the expansion of $F_{\Oh_\text{min}}$ in \eqref{eq:SL3-orbit-sum} then vanishes leaving only the maximally degenerate Whittaker vectors. 
    \begin{equation}
        \label{eq:SL3-Fmin-min-rep}
        F_{\Oh_\text{min}}(\chi_\text{min}, m_1', g) = \sum_{n \in \rats} W_{\psi_{m_1',0}}(\chi_\text{min}; l(n) g) \qquad l(n) =
        \begin{psmallmatrix}
            -1 & & \\
            & & -1 \\
            & -1 & n 
        \end{psmallmatrix}
    \end{equation}
\end{proof}

These manipulations of the minimal orbit coefficient for $SL(3)$ are inspired by Ginzburg \cite{GinzburgUnpublished}.

Thus, both the non-abelian Whittaker vector $W_{\psi_\mathcal{Z}}$ and the parabolic Fourier coefficient $F_U$ in \eqref{eq:SL3-parabolic-coeff} depend only on these degenerate Whittaker vectors on $N$ in the minimal representation since they are directly given by $F_{\Oh_\text{min}}$. 

\vspace{.3cm} 

\begin{remark}
The fact that all Fourier coefficients of the minimal Eisenstein series on $SL(3)$ are captured by maximally degenerate Whittaker vectors is compatible with the 
earlier results in~\cite{Pioline:2009qt} (see section 3.4 there).
\end{remark}

\vspace{.3cm}

For the minimal representation, the full expansion of $E(\chi, g)$ in \eqref{GeneralFE} can be expressed in terms of $F_{\Oh_\text{min}}$ (and $F_{\Oh_0}$) and thus in maximally degenerate Whittaker vectors (and the constant term). The Whittaker vectors that are not connected to $F_{\Oh_\text{min}}$ vanish.

In this way we can understand an $SL(3)$ variant of Miller and Sahi's theorem explained in section \ref{sec:deg-Whittaker}: in the minimal representation, Fourier coefficients that cannot be connected to $F_{\Oh_\text{min}}$ vanish and those that are connected to $F_{\Oh_\text{min}}$ are expressed by maximally degenerate Whittaker vectors.

As discussed in the beginning of section \ref{sec:deg-Whittaker}, we will now show that, in the minimal representation, the Fourier coefficient $F_U(\chi, m_1, m_2; g)$ is an $L(\rats)$-translate of a Whittaker vector with character supported only on the root $\alpha_1$ corresponding to the parameter $u_1$ and that in some cases they completely agree.

From \eqref{eq:SL3-FU}--\eqref{eq:SL3-FU-canonical} and \eqref{eq:SL3-Fmin-min-rep}
\begin{equation}
    \begin{split}
        \MoveEqLeft
        F_U(\chi_\text{min}, m_1, m_2; g) = F_U(\chi_\text{min}, 0, m'; l_U g) \\
        &= \int F_{\Oh_\text{min}}\left(\chi_\text{min}, m';
            \begin{psmallmatrix}
                1 & u_1 & \\
                & 1 & \\
                & & 1
            \end{psmallmatrix}
            l_U g \right) \, du_1 \\ 
        &= \sum_{n} \int W_{\psi_{(m', 0)}} \left( \chi_\text{min};
            \begin{psmallmatrix}
                -1 & & \\
                & & -1 \\
                & -1 & n
            \end{psmallmatrix}
            \begin{psmallmatrix}
                1 & u_1 & \\
                & 1 & \\
                & & 1
            \end{psmallmatrix}
            l_U g \right) \, du_1 \\
        &= \sum_{n} \int E\left(\chi_\text{min},
            \begin{psmallmatrix}
                1 & x_1 + n u_1 & x_2 + u_1 \\
                & 1 & x_3 \\
                & & 1
            \end{psmallmatrix} 
            \begin{psmallmatrix}
                -1 & & \\
                & & -1 \\
                & -1 & n
            \end{psmallmatrix} l_U g\right) \overline{\e(m' x_1)} \, d^3x \, du_1 \\
        &= \sum_{n} \int E\left(\chi_\text{min},
            \begin{psmallmatrix}
                1 & x_1 & x_2 \\
                & 1 & x_3 \\
                & & 1
            \end{psmallmatrix} 
            \begin{psmallmatrix}
                -1 & & \\
                & & -1 \\
                & -1 & n
            \end{psmallmatrix} l_U g\right) \overline{\e(m' x_1 - m' n u_1)} \, d^3x \, du_1  
    \end{split}
\end{equation}
where we have made the substitutions $x_1 + n u_1 \to x_1$ and $x_2 + u_1 \to u_1$. Since the only $u_1$ dependence is in the character, the integral over $u_1$ picks up only the constant term $n = 0$. 

Thus, 
\begin{equation}
    \label{eq:SL3-FU-Whittaker}
    \begin{split}
        \MoveEqLeft
        F_U(\chi_\text{min}, m_1, m_2; g) = F_U(\chi_\text{min}, 0, m'; l_U g) \\
        &= \int E\left(\chi_\text{min},
            \begin{psmallmatrix}
                1 & x_1 & x_2 \\
                & 1 & x_3 \\
                & & 1 
            \end{psmallmatrix} 
            \begin{psmallmatrix}
                -1 & & \\
                & & -1 \\
                & -1 &
            \end{psmallmatrix} l_U g\right) \overline{\e(m' x_1)} \, d^3x \\
        &= W_{\psi_{m',0}}\Big(\chi_\text{min},
        \underbrace{
            \begin{psmallmatrix}
                -1 & & \\
                & & -1 \\
                & -1 &
            \end{psmallmatrix} l_U}_{l_U'} g\Big)
    \end{split}
\end{equation}

Hence, the extra integral over $u_1$ picks up a single Whittaker vector from the sum in $F_{\Oh_\text{min}}$ and for the case $m_1 = m_1' \neq 0$ and $m_2 = 0$ we have from \eqref{eq:SL-FU-l} that
\begin{equation}
    \label{eq:SL3-Fourier-Whitt-agree}
    F_U(\chi_\text{min}, m_1', 0; g) = W_{\psi_{m_1', 0}}(\chi_\text{min}, g) \, .
\end{equation}

To summarize, in the case where only $m_1$ is charged the Fourier coefficient completely agrees with the Whittaker vector charged under the same simple root, while the more general Fourier coefficient with $m_2 \neq 0$ is the same Whittaker vector but where the argument is translated with the Levi element $l_U'$.

Alternatively, one could obtain \eqref{eq:SL3-FU-Whittaker} by directly expanding $F_U(\chi, m_1, m_2; g)$ in the same way we expanded $F_{\Oh_\text{min}}$, but since we have expressed the full expansion \eqref{GeneralFE} of $E(\chi, g)$ in terms of orbit coefficients \eqref{eq:SL3-Whittaker-as-orbits} and since only the minimal orbit is non-vanishing for the minimal repesentation, it is more efficient, in the long run, to expand only $F_{\Oh_\text{min}}$ once.

We will also see how this workflow of connecting Fourier coefficients to orbit coefficients and then expanding the latter is easily used to obtain a generalization for the next-to-minimal representation in the next section where we will treat $SL(4)$.

\section{\texorpdfstring{$SL(4)$}{SL(4)} Eisenstein series}
\label{sec:SL4}
A general Eisenstein series $E(\chi, g)$ on $G(\ads) = SL(4, \ads)$ can be expanded as
\begin{equation}
    \label{eq:SL4-Whit-expansion}
    E(\chi, g) = \suml[\psi_N] W_N(\chi, \psi_N; g) + \suml[\psi_{N'} \neq 1] W_{N'}(\chi, \psi_{N'}; g)
\end{equation}
where $N' = [N, N]$. Note that we have included the constant term $E_0(\chi, g) = W_N(\chi, 1; g)$ in the first sum, corresponding to the term $\psi_N=1$. 

For convenience we will specify the Whittaker vectors on $N$ and $N'$ with a list of charges as follows
\begin{equation}
    \begin{split}
        W_N(\chi, m_1, m_4, m_6; g) &= \int E(\chi,
        \begin{psmallmatrix}
            1 & x_1 & x_2 & x_3 \\
            & 1 & x_4 & x_5 \\
            & & 1 & x_6 \\
            & & & 1
        \end{psmallmatrix}
        g ) \overline{\e(m_1 x_1 + m_4 x_4 + m_6 x_6)} \, d^6x \\
        W_{N'}(\chi, m_1, m_2, m_3; g) &= \int E(\chi,
        \begin{psmallmatrix}
            1 & & x_1 & x_2 \\
            & 1 & & x_3 \\
            & & 1 & \\
            & & & 1
        \end{psmallmatrix}
        g ) \overline{\e(m_1 x_1 + m_2 x_2 + m_3 x_3)} \, d^3x \, .
    \end{split}
\end{equation}

\subsection{Fourier coefficients attached to nilpotent orbits}

The nilpotent orbits of $SL(n)$ can be identified with partitions of $n$ written with multiplicities in superscript, that is, $(n_1^{k_1} \ldots n_m^{k_m})$ where $k_1 n_1 + \cdots + k_m n_m = n$ \cite{CollingwoodMcGovern}. The orbits for $SL(4)$ are shown in table \ref{tab:SL4-orbits}.

\newcommand{\SLFourDynkin}[3]
{
    \scalebox{0.75}{
    \begin{tikzpicture}[baseline={([yshift=-.8ex]current bounding box.center)}]
    \node(n1) at (0, 0) [dot] {};
    \node(n2) at (1, 0) [dot] {};
    \node(n3) at (2, 0) [dot] {};

    \node at ($ (n1) + (0, 0.5) $) {#1};
    \node at ($ (n2) + (0, 0.5) $) {#2};
    \node at ($ (n3) + (0, 0.5) $) {#3};

    \begin{scope}[on background layer]
        \draw [line] (n1) -- (n2);
        \draw [line] (n2) -- (n3);
    \end{scope}
    \end{tikzpicture}
    }
}

\begin{table}[ht]
    \tabulinesep=0.5em
    \centering
    \topcaption{\label{tab:SL4-orbits}Nilpotent $SL(4)$ orbits with weighted Dynkin diagrams, dimensions, stabilizer type $C$ and $V = U_{i \geq 2}$ \cite{CollingwoodMcGovern, Carter}. The labelling of the orbit is done by both the partition and the corresponding Bala--Carter label.} 
    \begin{tabu}{r@{\hspace{3pt}}lcccc}
        \toprule
        \multicolumn{2}{c}{Orbit $\mathcal{O}$} & Weighted Dynkin diagram & $\dim(\mathcal{O})$ & $C$ & $V$ \\
        \midrule
        $(4)$ &$=A_3$ & \SLFourDynkin{2}{2}{2} & $12$ & $\id$ &
        \def\arraystretch{1.0}
        $\begin{psmallmatrix}
            1 & * & * & * \\
            & 1 & * & * \\
            &   & 1 & * \\
            &   &   & 1 \\
        \end{psmallmatrix}$ \\
        $(31)$ &$=A_2$ & \SLFourDynkin{2}{0}{2} & $10$ & $T_1$ &
        \def\arraystretch{1.0}
        $\begin{psmallmatrix}
            1 & * & * & * \\
            & 1 &  & * \\
            &   & 1 & * \\
            &   &   & 1 \\
        \end{psmallmatrix}$ \\
        $(2^2)$ &$=2A_1$ & \SLFourDynkin{0}{2}{0} & $8$ & $A_1$ &
        \def\arraystretch{1.0}
        $\begin{psmallmatrix}
            1 &   & * & * \\
            & 1 & * & * \\
            &   & 1 &   \\
            &   &   & 1 \\
        \end{psmallmatrix}$ \\
        $(21^2)$ &$=A_1$ & \SLFourDynkin{1}{0}{1} & $6$ & $A_1 \times T_1$ &
        \def\arraystretch{1.0}
        $\begin{psmallmatrix}
            1 &   &   & * \\
            & 1 &   &   \\
            &   & 1 &   \\
            &   &   & 1 \\
        \end{psmallmatrix}$ \\ 
        $(1^4)$ &$=0$ & \SLFourDynkin{0}{0}{0} & $0$ & $A_3$ &
        \def\arraystretch{1.0}
        $\begin{psmallmatrix}
            1 &   &   &   \\
            & 1 &   &   \\
            &   & 1 &   \\
            &   &   & 1 \\
        \end{psmallmatrix}$ \\
        \bottomrule
    \end{tabu}
\end{table}
\def\arraystretch{1}

Recall that for a Fourier coefficient on $V = N$ to be attached to the regular orbit $(4)$, the character $\psi_N$ needs to be non-degenerate (generic) to be stabilized only by the trivial $C = \id$.

We will now introduce a convenient, concrete description of the characters based on the identification with $\lie u^*$ discussed at the end of section \ref{sec:fourier-coeffs}.

A character $\psi_U$ on a general unipotent group $U$ of $SL(n)$ can be described by a matrix $\M$ in $U(\rats)/[U,U](\rats)$ with elements $\M_{ij}$ by 
\begin{equation}
    \psi_U(u) = \e(\tr \M^T u) = \e\Big(\sum_{ij} \M_{ij} u_{ij}\Big)
\end{equation}
We will call such a matrix $\M$ a \emph{character matrix} and denote the corresponding Fourier coefficient on $U$ by $F_U(\M; g)$.

The $L(\rats)$-conjugation of a character in \eqref{eq:L-conjugation} can then expressed as
\begin{equation}
    \label{eq:charge-matrix-conjugation}    
    F_U(\M; g) = F_U(\M'; lg) \qquad \M' = (l^T)^{-1} \M l^T
\end{equation}
for $l \in L(\rats)$.

Since also $l^T \in L(\rats)$ for $SL(n)$ the character variety orbits discussed in section \ref{sec:fourier-coeffs} can be described as $L(\rats)$-orbits of the character matrices $\M$. We will also discuss $G(\rats)$-orbits of the character matrices in section \ref{sec:SL4-parabolic-coeffs}.

An orbit coefficient of section \ref{sec:orbit-rep} may be similarly described by a matrix $\M$ in $V(\rats)/[V,V](\rats)$ and we are now interested in which such matrices give non-zero orbit coefficients. According to section \ref{sec:orbit-rep} the stabiliser $C(\psi_V^\Oh) \subset L_\Oh$ of $\psi_V^\Oh$ must be of the same type as that of $\Oh$ as listed in table \ref{tab:SL4-orbits}.

If $\M \in V(\rats)/[V,V](\rats)$ is in the $G$-orbit $\Oh$ then $C_G(H) = L_\Oh$ and the type of $C_G(M) \cap L_\Oh$ is trivially the same as the stabilizer type associated to the orbit $\Oh$ as defined in section \ref{nilpotentorbits} and the Fourier coefficient with character $\psi_V^\Oh$ described by $\M$ is attached to $\Oh$.

The condition for $\M$ to be in the orbit $\Oh$ can be found by requiring that the corresponding Lie algebra element $X$, where $\M = \exp(X)$ can form a Jacobson-Morozov triple with $H$ given by the weighted Dynkin diagram.

\pagebreak

\begin{example}[Conditions for $F_{(2^2)}$]~\\
    From the conjugacy class of Jacobson-Morozov triples associated to our nilpotent orbit it is possible to choose a representative of the neutral element $H$ such that it belongs to our Cartan subalgebra $\lie h$ \cite{CollingwoodMcGovern}.

    As seen from the weighted Dynkin diagram in table \ref{tab:SL4-orbits} the Cartan element associated to the nilpotent orbit $(2^2)$ satisfies $[H, E_{\alpha_1}] = 0$, $[H, E_{\alpha_2}] = 2 E_{\alpha_2}$ and $[H, E_{\alpha_3}] = 0$, which, together with tracelessness gives $H = \diag(1, 1, -1, -1)$.

    The conditions $[H, X] = 2 X$ and $[H, Y] = -2 Y$ requires $X$ and $Y$ to be of the forms
    \begin{equation}
        X = 
        \begin{psmallmatrix}
            0 & 0 & x_1 & x_2 \\
            0 & 0 & x_3 & x_4 \\
            0 & 0 & 0 & 0 \\
            0 & 0 & 0 & 0
        \end{psmallmatrix} \qquad
        Y =
        \begin{psmallmatrix}
            0 & 0 & 0 & 0 \\
            0 & 0 & 0 & 0 \\
            y_1 & y_3 & 0 & 0 \\
            y_2 & y_4 & 0 & 0
        \end{psmallmatrix} \qquad
    \end{equation}

    Then, one can show that the condition that there should exist a $Y$ on this form such that $[X, Y] = H$, is equivalent to that the elements of $X$ satisfy $x_1 x_4 - x_2 x_3 \neq 0$. We can now relate this condition to the elements of $\M$ by
    \begin{equation}
        \M = \exp(X) = 
        \begin{psmallmatrix}
            1 & 0 & x_1 & x_2 \\
            0 & 1 & x_3 & x_4 \\
            0 & 0 & 1 & 0 \\
            0 & 0 & 0 & 1
        \end{psmallmatrix} \in V(\rats)/[V,V](\rats)
    \end{equation}
\end{example}

The remaining orbits can be treated similarly. The Fourier coefficients attached to the different $SL(4)$ orbits together with the conditions on the charges are shown in table \ref{tab:SL4-orbit-coefficients}. 

\begin{table}[!htbp]
    \centering
    \topcaption{\label{tab:SL4-orbit-coefficients}$SL(4)$ orbit coefficients with character matrix and conditions on the charges.}
    \vspace{-1.75em}
    $$\begin{array}{lc}
        \toprule
        \multicolumn{1}{c}{\text{Fourier coefficient}} & \text{Character matrix} \\ 
        \midrule
        \begin{aligned}
            \MoveEqLeft
            F_{(4)}(\chi, m_1, m_4, m_6; g) \\
            &= \int E(\chi,
                \begin{psmallmatrix}
                    1 & x_1 & x_2 & x_3 \\
                    & 1 & x_4 & x_5 \\
                    & & 1 & x_6 \\
                    & & & 1
                \end{psmallmatrix}
                g) \overline{\e(m_1 x_1 + m_4 x_4 + m_6 x_6)} \, d^6x
        \end{aligned}
        &
        \begin{gathered}
            \begin{psmallmatrix}
            1 & m_1 & & \\
            & 1 & m_4 & \\
            & & 1 & m_6 \\
            & & & 1
            \end{psmallmatrix}
            \\
            m_1 m_4 m_6 \neq 0
        \end{gathered} \\[5em]
        \begin{aligned}
            \MoveEqLeft
            F_{(31)}(\chi, m_1, m_2, m_5, m_6; g) \\
            & = \int E(\chi,
                \begin{psmallmatrix}
                    1 & x_1 & x_2 & x_3 \\
                    & 1 & & x_5 \\
                    & & 1 & x_6 \\
                    & & & 1
                \end{psmallmatrix}
                g) \overline{\e(m_1 x_1 + m_2 x_2 + m_5 x_5 + m_6 x_6)} \, d^5x \quad
        \end{aligned}
        & 
        \begin{gathered}
            \begin{psmallmatrix}
                1 & m_1 & m_2 & \\
                & 1 & & m_5 \\
                & & 1 & m_6 \\
                & & & 1
            \end{psmallmatrix}
            \\ 
            m_1 m_5 + m_2 m_6 \neq 0 
        \end{gathered} \\[5em]
        \begin{aligned}
            \MoveEqLeft
            F_{(2^2)}(\chi, m_1, m_2, m_3, m_4; g) \\
            & = \int E(\chi,
                \begin{psmallmatrix}
                    1 & & x_1 & x_2 \\
                    & 1 & x_3 & x_4 \\
                    & & 1 & \\
                    & & & 1
                \end{psmallmatrix}
                g) \overline{\e(m_1 x_1 + m_2 x_2 + m_3 x_3 + m_4 x_4)} \, d^4x
        \end{aligned} 
        & 
        \begin{gathered}
            \begin{psmallmatrix}
                1 & & m_1 & m_2 \\
                & 1 & m_3 & m_4 \\
                & & 1 & \\
                & & & 1
            \end{psmallmatrix}
            \\
            m_1 m_4 - m_2 m_3 \neq 0
        \end{gathered}
        \\[5em]
        \begin{aligned}
            \MoveEqLeft
            F_{(21^2)}(\chi, m_1; g) \\
            & = \int E(\chi,
                \begin{psmallmatrix}
                    1 & & & x_1 \\
                    & 1 & & \\
                    & & 1 & \\
                    & & & 1
                \end{psmallmatrix}
                g) \overline{\e(m_1 x_1)} \, dx_1
        \end{aligned}
        &
        \begin{gathered}
            \begin{psmallmatrix}
                1 & & & m_1 \\
                & 1 & & \\
                & & 1 & \\
                & & & 1   
            \end{psmallmatrix}
            \\
            m_1 \neq 0 
        \end{gathered}    
        \\[5em]
        \begin{aligned}
            \MoveEqLeft
            F_{(1^4)}(\chi; g) \\
            & = \int E(\chi,
                \begin{psmallmatrix}
                    1 & x_1 & x_2 & x_3 \\
                    & 1 & x_4 & x_5 \\
                    & & 1 & x_6 \\
                    & & & 1
                \end{psmallmatrix}
                g) \, d^6x = E_0(\chi, g)
        \end{aligned}
        & \\
        \bottomrule
    \end{array}$$
\end{table}

\clearpage

\subsection{Expansion in nilpotent orbits and Theorem \ref{thm:orbit-exp} for \texorpdfstring{$SL(4)$}{SL(4)}}
Similar to how the Whittaker vectors were related to orbit coefficients in section \ref{sec:SL3-orbit-expansion} for $SL(3)$, the Whittaker vectors for $SL(4)$ can be expressed in terms of orbit coefficients as shown in table \ref{tab:SL4-Whittaker-as-orbits}. See appendix \ref{app:SL4-Whittaker-as-orbits} for further details.

\begin{table}[!htbp]
\centering
    \topcaption{\label{tab:SL4-Whittaker-as-orbits}$SL(4)$ Whittaker vectors in terms of orbit coefficients. Primed charges are non-zero and the undecorated charges are arbitrary.}
    \vspace{-1.75em}
    \begin{align*}
        \toprule
        \multicolumn{1}{c}{\text{Whittaker vector}} & \multicolumn{1}{c}{\text{Orbit coefficient}} \\
        \midrule
        W_N(\chi, 0, 0, 0; g) &= F_{(1^4)}(\chi, g) \displaybreak[0]\\[1em]
        W_N(\chi, m_1', 0, 0; g) &=
        \int F_{(21^2)}(\chi, m_1'; 
        \begin{psmallmatrix}
            1 & & & \\
            & & & -1 \\
            & & 1 & \\
            & 1 & &
        \end{psmallmatrix}
        \begin{psmallmatrix}
            1 & & u_2 & u_3 \\
            & 1 & u_4 & u_5 \\
            & & 1 & u_6 \\
            & & & 1
        \end{psmallmatrix}
        g) \, d^5u \\
        W_N(\chi, 0, m_4', 0; g) &=
        \int F_{(21^2)}(\chi, m_4'; 
        \begin{psmallmatrix}
            & 1 & & \\
            1 & & & \\
            & & & 1 \\
            & & 1 &
        \end{psmallmatrix}
        \begin{psmallmatrix}
            1 & u_1 & u_2 & u_3 \\
            & 1 & & u_5 \\
            & & 1 & u_6 \\
            & & & 1
        \end{psmallmatrix}
        g) \, d^5u \\
        W_N(\chi, 0, 0, m_6'; g) &=
        \int F_{(21^2)}(\chi, m_6'; 
        \begin{psmallmatrix}
            & & 1 & \\
            & 1 & & \\
            -1 & & & \\
            & & & 1
        \end{psmallmatrix}
        \begin{psmallmatrix}
            1 & u_1 & u_2 & u_3 \\
            & 1 & u_4 & u_5 \\
            & & 1 & \\
            & & & 1
        \end{psmallmatrix}
        g) \, d^5u
        \displaybreak[0]\\[1em]
        W_N(\chi, m_1', m_4', 0; g) &=
        \sum_m \int F_{(31)}(\chi, m_1', 0, m_4', m;
        \begin{psmallmatrix}
            1 & & & \\
            & 1 & & \\
            & & & -1 \\
            & & 1 &
        \end{psmallmatrix}
        \begin{psmallmatrix}
            1 & & & \\
            & 1 & & u_5 \\
            & & 1 & u_6 \\
            & & & 1
        \end{psmallmatrix}
        g) \, d^2 u \\
        W_N(\chi, 0, m_4', m_6'; g) &=
        \sum_m \int F_{(31)}(\chi, m, m_4', 0, m_6';
        \begin{psmallmatrix}
            & 1 & & \\
            -1 & & & \\
            & & 1 & \\
            & & & 1
        \end{psmallmatrix}
        \begin{psmallmatrix}
            1 & u_1 & u_2 & \\
            & 1 & & \\
            & & 1 & \\
            & & & 1
        \end{psmallmatrix}
        g) \, d^2 u \\
        W_N(\chi, m_1', 0, m_6'; g) &=
        \sum_m \int F_{(2^2)}(\chi, -m_1', 0, m, m_6'; 
        \begin{psmallmatrix}
            -1 & & & \\
            & & 1 & \\
            & 1 & & \\
            & & & 1
        \end{psmallmatrix}
        \begin{psmallmatrix}
            1 & & u_2 & \\
            & 1 & u_4 & u_5 \\
            & & 1 & \\
            & & & 1
        \end{psmallmatrix}
        g) \, d^3u
        \displaybreak[0]\\[1em]
        W_N(\chi, m_1', m_4', m_6'; g) &= F_{(4)}(\chi, m_1', m_4', m_6'; g) \\[1em]
        W_{N'}(\chi, m_1', 0, 0; g) &= \int F_{(21^2)}(\chi, m_1'; 
        \begin{psmallmatrix}
            1 & & & \\
            & 1 & & \\
            & & & -1 \\
            & & 1 &
        \end{psmallmatrix}
        \begin{psmallmatrix}
            1 & & & u_2 \\
            & 1 & & u_3 \\
            & & 1 & \\
            & & & 1
        \end{psmallmatrix}
        g) \, d^2u \\
        W_{N'}(\chi, 0, 0, m_3'; g) &= \int F_{(21^2)}(\chi, m_3'; 
        \begin{psmallmatrix}
            & 1 & & \\
            -1 & & & \\
            & & 1 & \\
            & & & 1
        \end{psmallmatrix}
        \begin{psmallmatrix}
            1 & & u_1 & u_2 \\
            & 1 & & \\
            & & 1 & \\
            & & & 1
        \end{psmallmatrix}
        g) \, d^2u \\
        W_{N'}(\chi, m_1', 0, m_3'; g) &= \sum_m F_{(2^2)}(\chi, m_1', 0, m, m_3'; g) \\
        W_{N'}(\chi, m_1, m_2', m_3; g) &= \int F_{(21^2)}(\chi, m_2'; 
        \begin{psmallmatrix}
            1 & & u_1 & \\
            & 1 & & u_3 \\
            & & 1 & \\
            & & & 1
        \end{psmallmatrix}
        \begin{psmallmatrix}
            1 & m_3/m_2' & & \\
            & 1 & & \\
            & & 1 & -m_1/m_2' \\
            & & & 1
        \end{psmallmatrix}
        g) \, d^2u \\
        \bottomrule
    \end{align*}
\end{table}

\clearpage

\begin{proof}[\textbf{Proof of Theorem \ref{thm:orbit-exp} for $SL(4)$}]~\par
    Table \ref{tab:SL4-Whittaker-as-orbits} together with the expansion in \eqref{eq:SL4-Whit-expansion} proves that we can expand $E(\chi, g)$ in terms of orbit coefficients.

    That is, regrouping \eqref{eq:SL4-Whit-expansion}, we have that
    \begin{equation}
        E(\chi, g) = \mathcal F_{(1^4)}(\chi, g) + \mathcal F_{(21^2)}(\chi, g) + \mathcal F_{(2^2)}(\chi, g) + \mathcal F_{(31)}(\chi, g) + \mathcal F_{(4)}(\chi, g)
    \end{equation}
    with
    \begin{equation}
        \begin{split}
            \mathcal F_{(1^4)}(\chi, g) &= E_0(\chi, g) \\
            \mathcal F_{(21^2)}(\chi, g) &= \sum_{m' \neq 0} \Big( W_N(\chi, m', 0, 0, 0; g) + W_N(\chi, 0, m', 0; g) + W_N(\chi, 0, 0, m'; g) +{} \\
           & \qquad \qquad + W_{N'}(\chi, m', 0, 0; g) + W_{N'}(\chi, 0, 0, m'; g) + \sum_{n,k} W_{N'}(\chi, n, m', k; g) \Big) \\
           \mathcal F_{(2^2)}(\chi, g) &= \sum_{m', n' \neq 0} \Big( W_N(\chi, m', 0, n'; g) + W_{N'}(\chi, m', 0, n'; g) \Big) \\
           \mathcal F_{(31)}(\chi, g) &= \sum_{m',n' \neq 0} \Big( W_N(\chi, m', n', 0; g) + W_N(\chi, 0, m', n'; g) \Big) \\
           \mathcal F_{(4)}(\chi, g) &= \sum_{m', n', k' \neq 0} W_N(\chi, m', n', k'; g)
        \end{split}
    \end{equation}
    where each $\mathcal F_\Oh$ is linearly determined by $F_\Oh$ coefficients as seen in table \ref{tab:SL4-Whittaker-as-orbits}.
\end{proof}

\subsection{The minimal representation and Theorem \ref{thm:min-rep} for \texorpdfstring{$SL(4)$}{SL(4)}}
We are now ready to prove Theorem \ref{thm:min-rep} for $SL(4)$, but first let us rephrase it using the language introduced in this section.

\begin{theorem}[Reformulated Theorem \ref{thm:min-rep} for $SL(4)$]
    \label{thm:SL4-min-rep}
  Assume $E(\chi, g)$ is in the minimal representation. Then the Fourier coefficients attached to the orbits $\Oh \notin \overline{\Oh_\text{min}}$, that is, the orbits $(4)$, $(31)$ and $(2^2)$, vanish and the Fourier coefficients attached to $\Oh_\text{min} = (21^2)$ can then be expressed as a sum of maximally degenerate Whittaker vectors on $N$.
\end{theorem}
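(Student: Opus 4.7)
}
The plan is to separate the theorem into its two assertions and handle them in sequence. For the vanishing of the Fourier coefficients attached to $(4)$, $(31)$, and $(2^2)$, the minimal representation $\pi_\text{min}$ has wavefront set contained in $\overline{\Oh_\text{min}} = \overline{(21^2)}$, and the theorems of Matumoto and M\oe glin--Waldspurger recalled in section \ref{sec:orbit-rep} assert that any orbit coefficient attached to an orbit outside this closure must vanish. Since $(4)$, $(31)$, and $(2^2)$ all strictly dominate $(21^2)$ in the closure order of $SL(4)$ nilpotent orbits, each of $F_{(4)}$, $F_{(31)}$, and $F_{(2^2)}$ vanishes identically on $E(\chi_\text{min}, g)$.

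For the second assertion I would mimic the $SL(3)$ calculation of section \ref{sec:SL3}. Starting from
$$F_{(21^2)}(\chi, m_1'; g) = \intl_{\rats \bs \ads} E\Big(\chi, \begin{psmallmatrix} 1 & & & x \\ & 1 & & \\ & & 1 & \\ & & & 1 \end{psmallmatrix} g\Big) \overline{\e(m_1' x)} \, dx,$$
I would iteratively Fourier expand along the remaining five coordinates of $N$, introducing at each step a new charge $m_i$ parametrising a Fourier mode. After each expansion I would sort the resulting terms by the $L(\rats)$-orbit class of the associated character matrix using Table \ref{tab:SL4-orbit-coefficients}: terms whose character matrices lie in nilpotent orbits strictly larger than $(21^2)$ fall into the cases $(2^2)$, $(31)$ or $(4)$, and therefore vanish in the minimal representation by the first part of the theorem.

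The surviving terms can then be rewritten, via conjugation by suitable elements of $L(\rats)$ combined with variable changes analogous to the substitution $x_1 + m_2 x_2 / m_1' \to x_1$ used in \eqref{eq:SL3-Fmin-expansion-step}, as $L(\rats)$-translates of maximally degenerate Whittaker vectors $W_{\psi_{\alpha_i}}(\chi_\text{min}; l \cdot g)$ for a suitable simple root $\alpha_i$. Combined with Table \ref{tab:SL4-Whittaker-as-orbits}, which already expresses each Whittaker vector on $N$ and $N'$ in terms of orbit coefficients, this would then yield the full Fourier expansion of $E(\chi_\text{min}, g)$ purely in terms of maximally degenerate Whittaker vectors and the constant term.

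The main obstacle I anticipate is the bookkeeping: extending from the one-dimensional $V = U_{i\geq 2}$ for $(21^2)$ up to the full six-dimensional $N$ requires five successive Fourier expansions, and at each stage one must simultaneously track which character matrices fall into which nilpotent orbit and choose Levi conjugators that absorb all non-simple-root entries of the character matrix into a single simple-root entry without reintroducing contributions from larger orbits. A secondary subtlety is that conjugating a character matrix by $l \in L(\rats)$ may move it between orbits of different nilpotent type, so the sorting must be performed \emph{after} the conjugation is chosen; the argument terminates precisely because the only remaining stable class in $\overline{\Oh_\text{min}}$ is $(21^2)$ itself.
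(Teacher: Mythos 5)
Your strategy mirrors the paper's proof: vanishing of the $(4)$, $(31)$, $(2^2)$ coefficients by the Matumoto/M\oe glin--Waldspurger argument of section \ref{sec:orbit-rep}, followed by iterative Fourier expansion of $F_{(21^2)}$ up to the full $N$ with sorting by nilpotent orbit and discarding higher-orbit pieces, which is precisely the content of Lemma \ref{lem:SL4-211-expanded}. One small correction to your concluding remark: conjugation by $l \in L(\rats) \subset G(\rats)$ preserves the nilpotent $G$-orbit type of a character matrix (it is, after all, $G$-conjugation), so it cannot move a character between orbits of different type; its sole purpose is to bring the character into a standard form where orbit membership can be read off directly from Table \ref{tab:SL4-orbit-coefficients}, and the sorting is therefore invariant under the choice of conjugator.
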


For the proof we will need the following lemma. This step can be done in several ways resulting in different sums of maximally degenerate Whittaker vectors on $N$. Below we show a version resulting in Whittaker vectors supported only on $\alpha_2$. In appendix \ref{app:SL4-F211-alternative-expansion} versions for $\alpha_1$ and $\alpha_3$ are shown as well.

\begin{lemma}
    \label{lem:SL4-211-expanded}
    For any representation, the $F_{(21^2)}$ coefficient can be expressed as
    \begin{equation}
        \label{eq:SL4-211-expanded}
        \begin{split}
            \MoveEqLeft
            F_{(21^2)}(\chi, m_4'; g) \\
                &= \sum_{l \in L_{(22)}^*} \sum_{m_3' \neq 0} F_{(2^2)}(\chi, 0, m_3', m_4', 0; lg) + \sum_{l \in L_{(22)}^*} \sum_{m_1, m_6} W_N(\chi, m_1, m_4', m_6; l g) 
        \end{split}     
    \end{equation}
    where $L_{(2^2)}^*$ is a certain subset of $L_{(2^2)}(\rats)$ defined below.
\end{lemma}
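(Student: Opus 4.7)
The strategy is to Fourier expand the one-dimensional integral defining $F_{(21^2)}$ into a four-dimensional integral over the abelian radical $V_{(2^2)}$, and then reorganize the resulting sum over characters according to the $L_{(2^2)}(\rats)$-orbit of the character matrix. Since $V_{(21^2)} = \exp(\rats E_{14}) \subset V_{(2^2)}$ (the upper right $2\times 2$ block generated by $E_{13}, E_{14}, E_{23}, E_{24}$) and $V_{(2^2)}$ is abelian, Poisson summation along the three additional directions gives
\begin{equation}
F_{(21^2)}(\chi, m_4'; g) = \sum_{m_{13}, m_{23}, m_{24} \in \rats} F_V(\chi, \M; g),
\end{equation}
where $F_V$ denotes the Fourier coefficient of $E$ on $V = V_{(2^2)}$ with character matrix $\M = \begin{psmallmatrix} m_{13} & m_4' \\ m_{23} & m_{24} \end{psmallmatrix}$ (with $m_4'$ fixed at the $(1,2)$ block position, corresponding to the $(1,4)$-entry of $SL(4)$).

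Next I would split the sum by $\rank(\M) \in \{1,2\}$, which (via Example~\ref{SL4NTM}) corresponds to the $G$-orbits $\Oh_{(21^2)}$ and $\Oh_{(2^2)}$ respectively. Using~\eqref{eq:charge-matrix-conjugation}, group the rank-$2$ characters into $L_{(2^2)}(\rats)$-orbits and select for each a representative of the standard form $\M_{std}(m_3') = \begin{psmallmatrix} 0 & m_3' \\ m_4' & 0 \end{psmallmatrix}$ with $m_3' \in \rats^*$; define $L^*_{(22)}$ as an appropriate set of coset representatives in $L_{(2^2)}(\rats)$ modulo the relevant stabilizer. This assembles the first term on the right-hand side of~\eqref{eq:SL4-211-expanded}, namely $\sum_{l \in L^*_{(22)}} \sum_{m_3' \neq 0} F_{(2^2)}(\chi, 0, m_3', m_4', 0; lg)$.

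For the rank-$1$ terms, I would verify that every such $\M$ (with $m_4'$ fixed at the $(1,2)$ block position) is $L_{(2^2)}(\rats)$-conjugate to $\tilde\M_0 = \begin{psmallmatrix} 0 & 0 \\ m_4' & 0 \end{psmallmatrix}$, placing $m_4'$ at the simple-root position $(2,3)$ of $SL(4)$. The resulting $F_V(\chi, \tilde\M_0; lg)$ has character depending only on the simple root $\alpha_2$, hence trivial on $[N,N]$, and therefore extends to a bona fide character on $N$. A further abelian Fourier expansion along the remaining simple-root subgroups $N_{\alpha_1}$ and $N_{\alpha_3}$ upgrades the $V$-integral to a full integral over $N(\rats)\bs N(\ads)$, yielding $\sum_{m_1, m_6 \in \rats} W_N(\chi, m_1, m_4', m_6; lg)$ for each $l \in L^*_{(22)}$ --- the second sum in the lemma.

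The main obstacle is the combinatorial bookkeeping for the orbit decomposition. The Levi acts on $\M$ as $\M \mapsto A^T \M B^{-T}$ with $A, B \in GL_2(\rats)$ and $\det A \det B = 1$, multiplying $\det \M$ by $(\det A)^2$; so strictly speaking the rank-$2$ $L_{(2^2)}(\rats)$-orbits are classified by $\det \M$ modulo $(\rats^*)^2$, and one must choose $L^*_{(22)}$ so that the joint sum $\sum_{m_3' \neq 0} \sum_{l \in L^*_{(22)}}$ hits each rank-$2$ matrix exactly once while simultaneously also indexing the rank-$1$ cosets correctly. A secondary issue is the passage from the $V$-integral to the full $N$-integral in the rank-$1$ case, which uses the non-abelian decomposition $N = V \cdot N_{\alpha_1} \cdot N_{\alpha_3}$; the commutator corrections produced by reordering land in $[N,N]$ directions, on which the conjugated character is already trivial, so the final expression as a Whittaker vector with character $(m_1, m_4', m_6)$ on the three simple roots is well-defined.
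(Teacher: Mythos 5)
Your high-level strategy matches the paper's: Fourier-expand $F_{(21^2)}$ into a sum of Fourier coefficients on the four-dimensional abelian radical $V = V_{(2^2)}$, conjugate the character matrices into standard form, split the sum according to whether the resulting character is rank~$2$ (attached to $(2^2)$) or rank~$1$, and further expand the rank-$1$ piece along $N_{\alpha_1}$ and $N_{\alpha_3}$ to produce $W_N(\chi, m_1, m_4', m_6; lg)$. Your observation that the commutator corrections from reordering land in $[N,N]$ where the conjugated character is trivial is also correct, and corresponds to the substitutions $x_2 + x_1 x_4 \to x_2$, $x_3 + x_1 x_5 \to x_3$ performed in the paper.

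However, there is a genuine gap in how you interpret $L_{(2^2)}^*$, and it is precisely the ``combinatorial bookkeeping'' you flag as an obstacle. You propose to group the rank-$2$ character matrices into $L_{(2^2)}(\rats)$-orbits and take $L_{(2^2)}^*$ to be ``an appropriate set of coset representatives modulo the relevant stabilizer.'' That is not what the paper does, and for good reason: the set of character matrices $\M$ with the $(1,4)$-entry fixed at $m_4'$ is a \emph{slice}, not a union of $L_{(2^2)}(\rats)$-orbits, and a given rank-$2$ orbit intersects this slice in a complicated, infinite set (your own observation that the determinant is only an orbit invariant modulo $(\rats^*)^2$ already shows that a single $m_3'$ does not index orbits bijectively). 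Trying to enumerate orbits, pick one representative of the form $\begin{psmallmatrix}0 & m_3' \\ m_4' & 0\end{psmallmatrix}$ per orbit, and then sum over a stabilizer-coset index does not obviously reproduce the triple sum $\sum_{m_2,m_3,m_5}$, and you do not resolve this. The paper avoids the problem entirely: for each $(m_2, m_5)$ one writes down an explicit Levi element $l(m_2, m_5)$ that conjugates the character to standard form, and one changes variables $m_3 \mapsto m_3' = m_3 - m_2 m_5 / m_4'$. The set $L_{(2^2)}^*$ is then just the \emph{image} of the bijection $(m_2, m_5) \mapsto l(m_2,m_5)$, i.e.\ a two-parameter family in $L_{(2^2)}(\rats)$ indexed by $\rats^2$, not a set of coset representatives of any subgroup. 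With this reading, $\sum_{l \in L_{(2^2)}^*}\sum_{m_3'}$ is a bijective re-indexing of $\sum_{m_2,m_3,m_5}$, and no orbit classification is needed. You should replace the orbit-enumeration argument by this direct change of variables; as written, your proof is incomplete because the re-indexing step is asserted but not established, and the coset-representative framing does not lead to the stated form of $L_{(2^2)}^*$.
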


\begin{proof}
    The integrand of the Fourier coefficient $F_{(21^2)}$ can be expanded further as
    \begin{equation}
        \begin{split}
            \MoveEqLeft
            F_{(21^2)}(\chi, m_4'; g) \\
            &= \int E(\chi,
            \begin{psmallmatrix}
                1 & & & x_4 \\
                & 1 & & \\
                & & 1 & \\
                & & & 1
            \end{psmallmatrix}
            g) \overline{\e(m_4' x_4)} \, dx_4 \\
            &= \sum_{m_2, m_3, m_5}\int E(\chi,
            \begin{psmallmatrix}
                1 & & x_5 & x_4 \\
                & 1 & x_3 & x_2 \\
                & & 1 & \\
                & & & 1
            \end{psmallmatrix}
            g) \overline{\e(m_2 x_2 + m_3 x_3 + m_4' x_4 + m_5 x_5)} \, d^4x 
        \end{split}
    \end{equation}
    
    Conjugating and redefining the parameters in the prefactor of $g$ (see \eqref{eq:charge-matrix-conjugation}) we obtain
    \begin{equation}
        \begin{split}
            \sum_{m_2, m_3, m_5} \int E(\chi,
            \begin{psmallmatrix}
                1 & & x_2 & x_3 \\
                & 1 & x_4 & x_5 \\
                & & 1 & \\
                & & & 1
            \end{psmallmatrix}
            \begin{psmallmatrix}
                & 1 & & \\
                1 & m_2/m_4' & & \\
                & & & 1 \\
                & & 1 & -m_5/m_4'
            \end{psmallmatrix}
            g) \overline{\e(m_4' x_4 + (m_3 - \tfrac{m_2 m_5}{m_4'}) x_3)} \, d^4x
        \end{split}
    \end{equation}
    When $m_3' = m_3 - \tfrac{m_2 m_5}{m_4'} \neq 0$ the coefficient is attached to the orbit $(2^2)$ as seen in table \ref{tab:SL4-orbit-coefficients}. 
    
    Let us rewrite the sum over $m_2$ and $m_5$ as a sum over elements in $L_{(2^2)}$ by defining the subset
    \begin{equation}
        L_{(2^2)}^* = \left\{
        \begin{psmallmatrix}
            & 1 & & \\
            1 & a & & \\
            & & & 1 \\
            & & 1 & b
        \end{psmallmatrix}
        : a, b \in \rats
        \right\}
    \end{equation} 
    where we have noted that we can scale away the $m_4'$ dependence.

    We thus have
    \begin{equation}
        \begin{split}
            \MoveEqLeft
            F_{(21^2)}(\chi, m_4'; g) \\
                &= \sum_{l \in L_{(22)}^*} \sum_{m_3' \neq 0} F_{(2^2)}(\chi, 0, m_3', m_4', 0; lg) + \sum_{l \in L_{(22)}^*} \int E(\chi,
                \begin{psmallmatrix}
                    1 & & x_2 & x_3 \\
                    & 1 & x_4 & x_5 \\
                    & & 1 & \\
                    & & & 1
                \end{psmallmatrix}
                lg) \overline{\e(m_4' x_4)} \, d^4x
        \end{split}
    \end{equation}
    The last term we expand further as
    \begin{equation}
        \begin{split}
            \MoveEqLeft
            \sum_{l \in L_{(22)}^*} \sum_{m_1, m_6} \int E(\chi,
            \begin{psmallmatrix}
                1 & x_1 & x_2 & x_3 \\
                & 1 & x_4 & x_5 \\
                & & 1 & x_6 \\
                & & & 1
            \end{psmallmatrix}
            lg) \overline{\e(m_1 x_1 + m_4' x_4 + m_6 x_6)} \, d^4x \\
            &= \sum_{l \in L_{(22)}^*} \sum_{m_1, m_6} W_N(\chi, m_1, m_4', m_6; l g)
        \end{split}
    \end{equation}
    where we have made the substitutions $x_2 + x_1 x_4 \to x_2$ and $x_3 + x_1 x_5 \to x_3$.
\end{proof}

We note that, the last term can be connected to different orbit coefficients using table \ref{tab:SL4-Whittaker-as-orbits}. That is, $m_1, m_6 \neq 0$ is connected to the regular orbit $(4)$, $m_1 \neq 0, m_6 = 0$ and $m_1 = 0, m_6 \neq 0$ to the orbit $(31)$, and $m_1 = m_6 = 0$ to the minimal orbit $(21^2)$. In this way we see how $F_{(21^2)}$ changes for different representations. 

\begin{proof}[\textbf{Proof of Theorem \ref{thm:SL4-min-rep}}]~\par
    Using the arguments of our section \ref{sec:orbit-rep} in the spirit of Matumoto and Mœglin-Waldspurger we know that in the minimal representation the only non-vanishing orbit coefficients are those attached to $(21^2)$ and $(1^4)$.
    
    The $(21^2)$ coefficient was further expanded into Fourier coefficients attached to orbits outside the closure of $(21^2)$ together with Whittaker vectors on $N$ in \eqref{eq:SL4-211-expanded} from lemma~\ref{lem:SL4-211-expanded}.
    
    From table \ref{tab:SL4-Whittaker-as-orbits} we know which orbit coefficients the different Whittaker vectors are connected with and using the arguments of section \ref{sec:orbit-rep} we know that the only surviving part of $F_{(21^2)}$ in the minimal representation is
    \begin{equation}
        F_{(21^2)}(\chi_\text{min}, m'; g) = \sum_{l \in L_{(22)}^*} W_N(\chi_\text{min}, 0, m', 0; l g)
    \end{equation}
\end{proof}

Thus, we have now proved that, in the minimal representation, $E(\chi, g)$ can be expressed as a sum of maximally degenerate Whittaker vectors on $N$ generalizing the results of Miller-Sahi \cite{MillerSahi} for $SL(4)$ ($SL(3)$ was shown above).

We note that, in the minimal representation, we may express $E(\chi, g)$ in terms of $F_{(21^2)}$ coefficients or maximally degenerate Whittaker vectors; the difference between them, as seen in lemma \ref{lem:SL4-211-expanded}, is higher orbit coefficients. We choose the latter since maximally degenerate Whittaker vectors are easier to compute.

This is also the source of the ambiguity of the $\mathcal F_\Oh$ terms discussed in the introduction; we may add higher orbit terms to $\mathcal F_\Oh$ which do not contribute when in the representation associated to $\Oh$.

\subsection{The next-to-minimal representation and Theorem \ref{thm:ntm-rep} for \texorpdfstring{$SL(4)$}{SL(4)}}
Before discussing the next-to-minimal representation, we note that it is only the sum $\sum_m F_{(2^2)}(m_1', 0, m, m_4'; g)$ that occurs in a full expansion according to table \ref{tab:SL4-Whittaker-as-orbits} and never $F_{(2^2)}$ alone. It turns out that we need to consider this sum instead of separate coefficients to be able to expand it in a similar way as for the $(21^2)$ coefficient. Therefore we will make the following definition 
\begin{equation}
    \label{eq:SL4-22-partially-summed-form}
    \begin{split}
        \MoveEqLeft
        F_{[2^2]}(\chi, m_1', m_4'; g) \\
        &:= \sum_m F_{(2^2)}(m_1', 0, m, m_4'; g) = \int E(\chi, 
        \begin{psmallmatrix}
            1 & & x_1 & x_2 \\
            & 1 & & x_4 \\
            & & 1 & \\
            & & & 1
        \end{psmallmatrix}
        g) \overline{\e(m_1' x_1 + m_4' x_4)} \, d^3 x
    \end{split}
\end{equation}
which we will call a \emph{partially summed} $(2^2)$ coefficient or a Fourier coefficient attached to $(2^2)$ in a \emph{partially summed form}. We will now show that an ordinary $(2^2)$ coefficient can be expressed in terms of a partially summed coefficient.

\begin{lemma}
    \label{lem:SL4-22-partially-summed-form}
    All coefficients attached to the orbit $(2^2)$ can be expressed on the form
    \begin{equation}
        \sum_{m} \int F_{(2^2)}(m_1', 0, m, m_4';
        \begin{psmallmatrix}
            1 & & & \\
            & 1 & u & \\
            & & 1 & \\
            & & & 1 
        \end{psmallmatrix}
        lg) \, du 
    \end{equation}
    for some $l \in L_{(2^2)}(\rats)$.
\end{lemma}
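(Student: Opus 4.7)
The plan is to combine an $L_{(2^2)}(\rats)$-conjugation of the character with a simple orthogonality identity on $\rats\bs\ads$. First, using \eqref{eq:charge-matrix-conjugation} with $l=\diag(A,B)$ and $A,B\in GL(2)$, the Levi acts on the upper-right $2\times 2$ block $M_{12}=\begin{psmallmatrix}m_1&m_2\\m_3&m_4\end{psmallmatrix}$ of the character matrix by $M_{12}\mapsto (A^T)^{-1}M_{12}B^T$. Since the $(2^2)$-condition forces $\det M_{12}\neq 0$, Gaussian elimination over $\rats$ produces $A,B\in GL(2,\rats)$ that diagonalise $M_{12}$ to $\diag(m_1',m_4')$; choosing the normalisation $m_1'm_4'=\det M_{12}$ yields $\det l=1$ automatically, so $l\in L_{(2^2)}(\rats)\subset SL(4,\rats)$. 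This reduces the lemma to the purely diagonal case
\begin{equation*}
F_{(2^2)}(\chi,m_1,m_2,m_3,m_4;g)=F_{(2^2)}(\chi,m_1',0,0,m_4';lg).
\end{equation*}

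The remaining step I would handle by a short direct computation. Writing $n(u)=\begin{psmallmatrix}1&&&\\&1&u&\\&&1&\\&&&1\end{psmallmatrix}$, multiplying matrices and substituting $x_3\mapsto x_3-u$ in the $x_3$-integral (permitted by translation invariance of the Haar measure on $\rats\bs\ads$) yields
\begin{equation*}
F_{(2^2)}(m_1',0,m,m_4';n(u)h)=\e(mu)\,F_{(2^2)}(m_1',0,m,m_4';h).
\end{equation*}
Integrating $u$ over $\rats\bs\ads$ and summing over $m\in\rats$ then collapses the double expression to the $m=0$ term via the orthogonality $\int_{\rats\bs\ads}\e(mu)\,du=\delta_{m,0}$, giving exactly $F_{(2^2)}(m_1',0,0,m_4';h)$ for $h=lg$. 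Combined with the $L$-conjugation of the first step, this is the claimed identity.

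The one mild subtlety is matching determinants so that $l\in SL(4,\rats)$, which is handled by the scaling freedom in the target diagonal entries $m_1',m_4'$. Nothing about the Eisenstein series or the representation of $\varphi$ is used, so the identity is valid for arbitrary automorphic $E(\chi,g)$. The construction is the $(2^2)$-analogue of the manipulations in \eqref{eq:SL3-Fmin-expansion-step} and in lemma \ref{lem:SL4-211-expanded}, and places $F_{(2^2)}$-coefficients into exactly the partially summed form needed for the next-to-minimal expansion of $F_{[2^2]}$.
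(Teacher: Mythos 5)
Your proof is correct and takes essentially the same route as the paper: reduce by an $L_{(2^2)}(\rats)$-conjugation to the diagonal character $\diag(m_1',0,0,m_4')$, then insert a $\sum_m\int du$ whose orthogonality collapses the expression to the $m=0$ term. One small imprecision: the claim that normalising $m_1'm_4'=\det M_{12}$ ``automatically'' forces $\det l=1$ is slightly loose (it only yields $\det A=\det B$, whereas $S(GL_2\times GL_2)$ requires $\det A\,\det B=1$), but the needed $l$ always exists after rescaling one diagonal entry of $A$; the paper sidesteps this entirely by first using a Weyl reflection in $L_{(2^2)}(\rats)$ to arrange $m_1\neq0$ and then applying an explicit unipotent shear $l=\begin{psmallmatrix}1&m_3/m_1'&&\\&1&&\\&&1&\\&&-m_2/m_1'&1\end{psmallmatrix}$.
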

\begin{proof}
    For $F_{(2^2)}(m_1, m_2, m_3, m_4; g)$ it is required that $m_1 m_4 - m_2 m_3 \neq 0$. This implies that at least one of $m_1$ and $m_3$ is non-zero. Since $m_1$ and $m_3$ can be interchanged with a Weyl reflection which is in $L_{(2^2)}(\rats)$ we can assume, without loss of generality that $m_1 = m_1' \neq 0$.

    Using conjugation we then have that
    \begin{gather}
        F_{(2^2)}(m_1', m_2, m_3, m_4; g) = F_{(2^2)}(m_1', 0, 0, m_4'; l g) \\
        m_4' = m_4 - \frac{m_2 m_3}{m_1'} \neq 0 \qquad l = 
        \begin{psmallmatrix}
            1 & m_3/m_1' & & \\
            & 1 & & \\
            & & 1 & \\
            & & -m_2/m_1' & 1
        \end{psmallmatrix} \in L_{(2^2)}(\rats)
    \end{gather}

    This coefficient can be expressed as
    \begin{equation}
        \begin{split}
            \MoveEqLeft
            F_{(2^2)}(m_1', 0, 0, m_4'; l g) \\
            &= \sum_{m_3} \int F_{(2^2)}(m_1', 0, m_3, m_4';
            \begin{psmallmatrix}
                1 & & & \\
                & 1 & u_3 & \\
                & & 1 & \\
                & & & 1 
            \end{psmallmatrix}
            lg) \, du_3 
        \end{split}
    \end{equation}
    since the integral over $u_3$ picks up only the $m_3 = 0$ term.
\end{proof}

\begin{remark}
    The sum of $F_{(2^2)}$ over $l$ in \eqref{eq:SL4-211-expanded} can be put in a partially summed form by
    \begin{equation}
        \begin{split}
            \MoveEqLeft
            \sum_{m, n} F_{(2^2)}(\chi, 0, m_3' , m_4', 0;
            \begin{psmallmatrix}
                & 1 & & \\
                1 & m/m_4' & & \\
                & & & 1 \\
                & & 1 & n/{m_4'}
            \end{psmallmatrix}
            g) \\
            &= \sum_{m, n} F_{(2^2)}(\chi, 0, m_3' , m_4', 0;
            \begin{psmallmatrix}
                & 1 & & \\
                -1 & m/m_4' & & \\
                & & 1 & \\
                & & & 1 
            \end{psmallmatrix}
            \begin{psmallmatrix}
                -1 & & & \\
                & 1 & & \\
                & & & 1 \\
                & & 1 & n/{m_4'}
            \end{psmallmatrix} g) \\
            &= \sum_{m, n} F_{(2^2)}(\chi, -m_4', 0, m, m_3';
            \begin{psmallmatrix}
                -1 & & & \\
                & 1 & & \\
                & & & 1 \\
                & & 1 & n/{m_4'}
            \end{psmallmatrix} g) \\
            &= \sum_n F_{[2^2]}(\chi, -m_4', m_3';
            \begin{psmallmatrix}
                -1 & & & \\
                & 1 & & \\
                & & & 1 \\
                & & 1 & n/{m_4'}
            \end{psmallmatrix} g)
        \end{split}
    \end{equation}
    which will be useful later.
\end{remark}{}

We are now ready to prove Theorem \ref{thm:ntm-rep} which we will now rephrase using the language introduced in this section.

\begin{theorem}[Reformulated Theorem \ref{thm:ntm-rep}]
    \label{thm:SL4-ntm-rep} 
    Assume $E(\chi, g)$ is in the next-to-minimal representation. Then the Fourier coefficients attached to the orbits $\Oh \notin \overline{\Oh_\text{ntm}}$, that is, $(4)$ and $(31)$ vanish. The Fourier coefficients attached to $\Oh_\text{ntm} = (2^2)$ can then be expressed in terms of Whittaker vectors $W_N(\chi, m, 0, n; g)$ with $m, n \neq 0$; more specifically, the partially summed $(2^2)$ coefficients defined in \eqref{eq:SL4-22-partially-summed-form} can be expressed as a sum of such Whittaker vectors at different translates of $g$.
   
    The coefficients attached to $(21^2)$ can, in the next-to-minimal representation, be expressed as a sum of maximally degenerate Whittaker vectors together with a sum of degenerate Whittaker vectors on the same form as for the partially summed $(2^2)$ coefficients.
\end{theorem}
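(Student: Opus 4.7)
The plan is to mirror the proof of Theorem 4.1 (for the minimal representation), leaving the $(2^2)$-orbit contributions in place and then expanding them further via an analogue of Lemma 4.1. The proof splits into three stages, paralleling the minimal-representation argument.

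First, the vanishing of $F_{(4)}$ and $F_{(31)}$ is immediate from section 2.4: the Matumoto--M{\oe}glin-Waldspurger bounds force $F_\Oh$ to vanish whenever $\Oh \not\subset \overline{\Oh_{(2^2)}}$ in $\pi_\text{ntm}$, and the Hasse diagram for $\lie{sl}(4)$ gives $\overline{\Oh_{(2^2)}} = \{(1^4), (21^2), (2^2)\}$. This disposes of the orbits $(4)$ and $(31)$.

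Second, I would establish an analogue of Lemma \ref{lem:SL4-211-expanded} for the partially summed $(2^2)$ coefficient. Starting from the 3-dimensional defining integral for $F_{[2^2]}(\chi, m_1', m_4'; g)$, I would perform successive Fourier expansions in the two outer simple-root directions, position $(1,2)$ and position $(3,4)$, absorbing cross-terms produced by left-multiplication via substitutions of the shape $x_2 + y_1 x_4 \to x_2$. Following this by Fourier expansion in the middle simple-root direction $(2,3)$ fills out all of $N$. Conjugation by appropriate elements $l \in L(\rats)$ then brings the resulting character matrices into standard forms exactly as in Lemma \ref{lem:SL4-211-expanded}, yielding a schematic decomposition
\begin{equation*}
F_{[2^2]}(\chi, m_1', m_4'; g) = \sum_l \bigl( \text{terms attached to } (31) \text{ and } (4) \bigr) + \sum_l W_N(\chi, m, 0, n; lg),
\end{equation*}
with the second sum ranging over suitable $L(\rats)$-translates and over $m, n \in \rats^*$. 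The first group of terms vanishes in $\pi_\text{ntm}$ by stage one, leaving the desired expression.

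Third, for $F_{(21^2)}$ in $\pi_\text{ntm}$, I would apply Lemma \ref{lem:SL4-211-expanded} directly. Among the Whittaker vectors $W_N(\chi, m_1, m_4', m_6; lg)$ appearing there, only the maximally degenerate $W_N(\chi, 0, m_4', 0; lg)$ survive, since table \ref{tab:SL4-Whittaker-as-orbits} attaches the remaining combinations to the vanishing orbits $(31)$ or $(4)$. The surviving $F_{(2^2)}$ contributions are regrouped into partially summed $F_{[2^2]}$ at translated arguments by the Remark following Lemma \ref{lem:SL4-22-partially-summed-form}, and each $F_{[2^2]}$ is then reduced to $W_N(\chi, m, 0, n; \cdot)$ using stage two.

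The principal obstacle lies in stage two. The successive Fourier expansions of $F_{[2^2]}$ introduce cross-terms that couple the integration variables; the intermediate character supported on both simple-root and non-simple-root coordinates is not literally an $N$-character, so careful tracking of substitutions and $L(\rats)$-conjugations is needed. One must verify that after the dust settles the higher-orbit terms separate cleanly and that only the two-charge Whittaker vectors $W_N(\chi, m, 0, n; \cdot)$ with both charges on the outermost simple roots $\alpha_1, \alpha_3$ nonzero survive, the remaining combinations being absorbed into the higher-orbit summands that vanish in $\pi_\text{ntm}$.
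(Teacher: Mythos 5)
Your proposal is correct and follows essentially the same route as the paper. Stage one matches the paper's use of the Matumoto--M{\oe}glin-Waldspurger vanishing. Stage two is precisely the content of the paper's Lemma~\ref{lem:SL4-22-expanded}: the successive Fourier expansions along $(1,2)$ and $(3,4)$ followed by conjugation by elements of $L_{(31)}^*$ produce a sum of $F_{(31)}$-coefficients plus Whittaker vectors $W_N(\chi,-m_1',m_4,m_6';lg)$, and in $\pi_{\text{ntm}}$ the $(31)$-terms and the $m_4\neq0$ Whittaker vectors (attached to $(4)$) drop, leaving $W_N(\chi,m,0,n;\cdot)$ with both outer charges nonzero. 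Stage three reproduces the paper's proof: apply Lemma~\ref{lem:SL4-211-expanded}, keep only the maximally degenerate $W_N(\chi,0,m_4',0;lg)$ from the direct Whittaker sum, and regroup the surviving $F_{(2^2)}$-terms into partially summed $F_{[2^2]}$ via the remark after Lemma~\ref{lem:SL4-22-partially-summed-form}. The "principal obstacle" you flag --- tracking the cross-terms and $L(\rats)$-conjugations so the intermediate charges land in genuine $N$-characters --- is exactly what the change-of-variables in the lemma proofs handles; this is bookkeeping rather than a gap.
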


For the proof we will need the following lemma.

\begin{lemma}
    \label{lem:SL4-22-expanded}
    For any representation, the partially summed $(2^2)$ orbit coefficient can be expressed as
    \begin{equation}
        \label{eq:SL4-22-expanded}
        \begin{split}
            \MoveEqLeft
            F_{[2^2]}(\chi, m_1', m_6'; g) \\
            &= \sum_{l \in L_{(31)}^*} \sum_{m_5' \neq 0} F_{(31)}(-m_1', 0, m_5', m_6'; lg) + \sum_{l \in L_{(31)}^*} \sum_{m_4} W_N(\chi, -m_1', m_4, m_6'; lg)
        \end{split}
    \end{equation}
    where $L_{(31)}^*$ is a certain subset of $L_{(31)}(\rats)$ defined below.
\end{lemma}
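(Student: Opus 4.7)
The plan is to mimic the strategy of Lemma \ref{lem:SL4-211-expanded} one step further up the Hasse diagram: expand $F_{[2^2]}$ into orbit coefficients attached to the larger orbit $(31)$, plus a remainder which one subsequently rewrites as Whittaker vectors on $N$. Recall that $V_{(31)}$ contains the three coordinates of the partially summed $F_{[2^2]}$ integrand (at positions $(1,3)$, $(1,4)$, $(2,4)$) together with the two additional positions $(1,2)$ and $(3,4)$, while $N$ itself contains one further position $(2,3)$.

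First I would Fourier-expand $F_{[2^2]}$ in the two new $V_{(31)}$-coordinates, introducing summation variables $A$ at $(1,2)$ and $D$ at $(3,4)$, to obtain
\begin{equation*}
F_{[2^2]}(\chi, m_1', m_6'; g) = \sum_{A,D \in \rats} \int E\Big(\chi,
\begin{psmallmatrix}
1 & y_1 & x_1 & x_2 \\
& 1 & & x_4 \\
& & 1 & y_6 \\
& & & 1
\end{psmallmatrix} g\Big) \overline{\e(A y_1 + m_1' x_1 + m_6' x_4 + D y_6)}\, d^3x\, dy_1\, dy_6 .
\end{equation*}
The resulting character matrix has entries $(A, m_1', m_6', D)$ at positions $(1,2), (1,3), (2,4), (3,4)$, and the $L_{(31)}$-invariant polynomial $m_1 m_5 + m_2 m_6$ from table \ref{tab:SL4-orbit-coefficients} evaluates to $A m_6' + m_1' D$ on it.

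Next, I would apply an $L_{(31)}(\rats)$-conjugation composed of the Weyl swap $w$ of the middle $SL(2)$ block, a shear $I + s\, e_{32}$ with $s = A/m_1'$, and the sign torus element $\diag(-1,1,1,-1)$. A short computation shows that this conjugation brings the character matrix to the canonical form $(-m_1', 0, m_5', m_6')$ with
\begin{equation*}
m_5' \;=\; -D - \tfrac{A m_6'}{m_1'} \;=\; -\tfrac{A m_6' + m_1' D}{m_1'} .
\end{equation*}
As $A$ ranges over $\rats$ (with $m_1'$ fixed), the resulting Levi elements trace out a one-parameter subset $L_{(31)}^* \subset L_{(31)}(\rats)$, and for each fixed $l \in L_{(31)}^*$ the sum over $D$ rewrites as a sum over $m_5' \in \rats$. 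Splitting by $m_5' \neq 0$ versus $m_5' = 0$ yields the two terms of the lemma: when $m_5' \neq 0$ the character is generic in the $(31)$-orbit and the summand is exactly $F_{(31)}(-m_1', 0, m_5', m_6'; lg)$; when $m_5' = 0$ (i.e.\ $D = -Am_6'/m_1'$), one performs a final Fourier expansion in the remaining coordinate of $N$, namely position $(2,3)$, introducing a summation variable $m_4$ and producing the Whittaker vectors $W_N(\chi, -m_1', m_4, m_6'; lg)$.

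The main obstacle will be the bookkeeping of the $L_{(31)}^*$-conjugation: one must verify that a single composite Levi element simultaneously kills the $(1,3)$-entry, moves the $(2,4)$-charge into the $(3,4)$-position with the correct sign, rescales the $(1,2)$-entry to $-m_1'$, and leaves the input charge $m_6'$ in place at $(3,4)$. This is entirely analogous to the role played by $L_{(2^2)}^*$ and the $(2^2)$-invariant $m_1 m_4 - m_2 m_3$ in the proof of Lemma \ref{lem:SL4-211-expanded}, so no new conceptual input is required beyond careful matrix manipulation.
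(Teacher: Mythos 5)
Your proposal is correct and follows essentially the same approach as the paper's own proof: Fourier-expand $F_{[2^2]}$ in the two missing $V_{(31)}$-coordinates (positions $(1,2)$ and $(3,4)$), conjugate by a suitable element of $L_{(31)}(\rats)$ to bring the character to the canonical form $(-m_1',0,m_5',m_6')$, split the sum over the new parameter by $m_5'\neq 0$ versus $m_5'=0$, and for the latter do one final Fourier expansion in the $(2,3)$-coordinate. Your $L_{(31)}^*$-element and the resulting $m_5'$ differ from the paper's only by left-multiplication with the torus element $\mathrm{diag}(1,1,-1,-1)\in L_{(31)}(\rats)$, which flips the sign of $m_5'$; this is absorbed by the sum over $D$ (resp.\ $m_5$) and changes nothing.
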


\begin{proof}
    The partially summed $(2^2)$ coefficient can be further expanded as
    \begin{equation}
        \begin{split}
            \MoveEqLeft
            F_{[2^2]}(\chi, m_1', m_6'; g) \\
            &= \int E(\chi, 
            \begin{psmallmatrix}
                1 & & x_1 & x_3 \\
                & 1 & & x_6 \\
                & & 1 & \\
                & & & 1
            \end{psmallmatrix}
            g) \overline{\e(m_1' x_1 + m_6' x_6)} \, d^3x \\
            &= \sum_{m_2, m_5} \int E(\chi,
            \begin{psmallmatrix}
                1 & x_2 & x_1 & x_3 \\
                & 1 & & x_6 \\
                & & 1 & x_5 \\
                & & & 1
            \end{psmallmatrix}
            g) \overline{\e(m_1' x_1 + m_2 x_2 + m_5 x_5 + m_6' x_6)} \, d^5x \\
            &= \sum_{m_2, m_5} \int E(\chi, 
            \begin{psmallmatrix}
                1 & x_1 & x_2 & x_3 \\
                & 1 & & x_5 \\
                & & 1 & x_6 \\
                & & & 1
            \end{psmallmatrix}
            \begin{psmallmatrix}
                -1 & & & \\
                & & 1 & \\
                & 1 & -\frac{m_2}{m_1'} & \\
                & & & 1
            \end{psmallmatrix}
            g) \overline{\e(-m_1' x_1 + (m_5 + \tfrac{m_2 m_6'}{m_1'}) x_5 + m_6' x_6)} \, d^5x
        \end{split}
    \end{equation}

    When $m_5' = m_5 + \tfrac{m_2 m_6'}{m_1'} \neq 0$ the coefficient is attached to the orbit $(31)$. 
    
    We rewrite the sum over $m_2$ as a sum over elements in $L_{(31)}$ by defining the subset
    \begin{equation}
        L_{(31)}^* = \left\{
            \begin{psmallmatrix}
                -1 & & & \\
                & & 1 &  \\
                & 1 & a & \\
                & & & 1
            \end{psmallmatrix} : a \in \rats
        \right\}   
    \end{equation}
    Expand the remaining term to obtain
    \begin{equation}
        \begin{split}
            \MoveEqLeft
            F_{[2^2]}(\chi, m_1', m_6'; g) \\
            &= 
            \sum_{l \in L_{(31)}^*} \sum_{m_5' \neq 0} F_{(31)}(-m_1', 0, m_5', m_6'; lg) +{} \\
            & \quad + \sum_{l \in L_{(31)}^*} \int E(\chi, 
            \begin{psmallmatrix}
                1 & x_1 & x_2 & x_3 \\
                & 1 & & x_5 \\
                & & 1 & x_6 \\
                & & & 1
            \end{psmallmatrix}
            lg) \overline{\e(-m_1' x_1 + m_6' x_6)} \, d^5x \\
            &= \sum_{l \in L_{(31)}^*} \sum_{m_5' \neq 0} F_{(31)}(-m_1', 0, m_5', m_6'; lg) + \sum_{l \in L_{(31)}^*} \sum_{m_4} W_N(\chi, -m_1', m_4, m_6'; lg)
        \end{split}
    \end{equation}
    
\end{proof}

Again, using table \ref{tab:SL4-Whittaker-as-orbits} we see that the different Whittaker vectors in the last sum above can be connected to different orbits. The terms with $m_4 \neq 0$ are connected to the regular orbit $(4)$, while $m_4 = 0$ is connected to the next-to-minimal orbit $(2^2)$.

\begin{proof}[\textbf{Proof of Theorem \ref{thm:SL4-ntm-rep}}]~\par
    From the arguments of section \ref{sec:orbit-rep} we know that, in the next-to-minimal representation, the only non-vanishing orbit coefficients are those attached to $(2^2)$, $(21^2)$ and $(1^4)$.

    Lemma \ref{lem:SL4-22-partially-summed-form} relates a general $(2^2)$ coefficients to its partially summed form which, in turn, can be expanded as in \eqref{eq:SL4-22-expanded} using lemma \ref{lem:SL4-22-expanded}.

    We know that if $E(\chi, g)$ is in the next-to-minimal representation the $F_{(31)}$ coefficient vanishes as well as the generic Whittaker vectors with $m_4 \neq 0$ according to table \ref{tab:SL4-Whittaker-as-orbits}.

    The remaining part of $F_{[2^2]}$ is then
    \begin{equation}
        F_{[2^2]}(\chi_\text{ntm}, m_1', m_6'; g) = \sum_{l \in L_{(31)}^*} W_N(\chi_\text{ntm}, -m_1', 0, m_6'; lg) 
    \end{equation}

    Similarly from lemma \ref{lem:SL4-211-expanded} and the remark after lemma \ref{lem:SL4-22-partially-summed-form} we have that
    \begin{equation}
        \begin{split}
            \MoveEqLeft
            F_{(21^2)}(\chi_\text{ntm}, m_4'; g) \\
            &= \sum_{m_3' \neq 0} \sum_b F_{[2^2]}(\chi_\text{ntm}, -m_4', m_3';
            \begin{psmallmatrix}
                -1 & & & \\
                & 1 & & \\
                & & & 1 \\
                & & 1 & b
            \end{psmallmatrix}
            g) + \sum_{a, b} W_N(\chi_\text{ntm}, 0, m_4', 0;
            \begin{psmallmatrix}
                & 1 & & \\
                1 & a & & \\
                & & & 1 \\
                & & 1 & b
            \end{psmallmatrix} g) \\
            &= \sum_{m_3' \neq 0} \sum_{a,b} W_N(\chi_\text{ntm}, m_4', 0, m_3';
            \begin{psmallmatrix}
                1 & & & \\
                & & & 1 \\
                & 1 & & a \\
                & & 1 & b
            \end{psmallmatrix} g)
            + \sum_{a, b} W_N(\chi_\text{ntm}, 0, m_4', 0;
            \begin{psmallmatrix}
                & 1 & & \\
                1 & a & & \\
                & & & 1 \\
                & & 1 & b
            \end{psmallmatrix} g) 
        \end{split}
    \end{equation}
\end{proof}

\subsection{Fourier coefficients on maximal parabolic subgroups}
\label{sec:SL4-parabolic-coeffs}
Now that we know how the orbit coefficients can be determined for the minimal and next-to-minimal representations, let us see how we can use this for other Fourier coefficients as well.

\begin{theorem}
    \label{thm:SL4-maximal-parabolic}
    Let $P_\alpha(\ads) = L_\alpha(\ads) U_\alpha(\ads)$ be the maximal parabolic subgroup of $SL(4, \ads)$ specified by the single simple root $\alpha$ in the Lie algebra of $U_\alpha$.
    Let $F_{\psi_{U_\alpha}}( g)$ be a Fourier coefficient on $U_\alpha$ of some automorphic form $\varphi$ on $SL(4,\ads)$ with a non-trivial character $\psi_U$ described by a character matrix $\M$ in $U_\alpha(\rats)/[U_\alpha,U_\alpha](\rats)$. Then, $\M$ is either in the orbit $G(\rats) \cdot \M_{\Oh_\text{min}}$ and $F_{\psi_{U_\alpha}}$ is completely determined by $F_{\Oh_\text{min}}$ or $\M$ is in $G(\rats) \cdot \M_{\Oh_\text{ntm}}$ and $F_{\psi_{U_\alpha}}$ is completely determined by $F_{\Oh_\text{ntm}}$ where the latter is only possible for $\alpha = \alpha_2$. 
\end{theorem}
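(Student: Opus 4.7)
The plan is to enumerate the three maximal parabolics $P_{\alpha_1}, P_{\alpha_2}, P_{\alpha_3}$ of $SL(4, \ads)$ and, for each, classify the $L_\alpha(\rats)$-orbits of non-trivial characters $\psi_{U_\alpha}$ via the action \eqref{eq:charge-matrix-conjugation} on their character matrices $\M$. Since each $U_\alpha$ is abelian, every character is described by its matrix in the block-shape indicated in table \ref{tab:SL4-orbits}, and it suffices to identify a standard $L(\rats)$-representative for each orbit, match it with a Bala--Carter $G(\rats)$-orbit, and realise $F_{\psi_{U_\alpha}}$ as an integral or evaluation of the corresponding orbit coefficient.

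For $\alpha \in \{\alpha_1, \alpha_3\}$ the radical $U_\alpha$ is three-dimensional and $L_\alpha$ contains a $GL(3, \rats)$-factor that acts transitively on the non-zero characters; every non-trivial $\M$ is therefore $L(\rats)$-equivalent to the matrix supported only at position $(1,4)$, which is precisely $\M_{\Oh_\text{min}}$. Since the one-parameter subgroup $V_{(21^2)} \subset U_\alpha$ is exactly this $(1,4)$-entry, after an appropriate $L(\rats)$-translation one can Fourier-expand the complementary directions in $U_\alpha$ --- where the character is trivial --- retaining only their constant term, yielding a formula of the type
\begin{equation*}
F_{\psi_{U_\alpha}}(g) = \int F_{\Oh_\text{min}}(m'; n_\perp \, l \, g) \, dn_\perp,
\end{equation*}
with $n_\perp$ running over a complement of $V_{(21^2)}$ in $U_\alpha$ and $l \in L_\alpha(\rats)$ fixed. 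This determines $F_{\psi_{U_\alpha}}$ entirely by $F_{\Oh_\text{min}}$.

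For $\alpha = \alpha_2$ we have $U_{\alpha_2} = V_{(2^2)}$ (the upper-right $2 \times 2$ block) and $L_{\alpha_2} \cong GL(2, \rats) \times GL(2, \rats)$ acts on $\M$, viewed as a $2 \times 2$ matrix, with $\rank \M$ as a complete invariant. The rank-$1$ orbit has representative $\M_{\Oh_\text{min}}$ and is handled exactly as in the $\alpha_1, \alpha_3$ cases. The rank-$2$ orbit is characterised by $\det \M = m_1 m_4 - m_2 m_3 \neq 0$, coinciding with the non-degeneracy condition for $F_{(2^2)} = F_{\Oh_\text{ntm}}$ in table \ref{tab:SL4-orbit-coefficients}; since $V_{(2^2)}$ fills all of $U_{\alpha_2}$, no further Fourier expansion is necessary and $F_{\psi_{U_{\alpha_2}}}$ is literally an $F_{\Oh_\text{ntm}}$-coefficient evaluated at an $L(\rats)$-translate of $g$. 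The three-dimensional radicals $U_{\alpha_1}$ and $U_{\alpha_3}$, whose character matrices are single rows or columns, admit no rank-$2$ configuration, which explains why the next-to-minimal case is confined to $\alpha = \alpha_2$.

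The main obstacle is the careful bookkeeping of the $L(\rats)$-translates appearing in \eqref{eq:L-conjugation} together with verifying that each standard $L$-representative genuinely lies in the asserted $G(\rats)$-orbit; the latter reduces to matching the matrix rank of $\M$ with the Bala--Carter label via Example \ref{SL4NTM} and the Jacobson--Morozov computations used to derive table \ref{tab:SL4-orbit-coefficients}.
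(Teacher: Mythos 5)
Your proof is correct and takes essentially the same approach as the paper's: normalize $\psi_{U_\alpha}$ by $L_\alpha(\rats)$-conjugation to a standard representative, then Fourier-expand the directions of $U_\alpha$ on which the character is trivial to exhibit $F_{\psi_{U_\alpha}}$ as an integral of the relevant orbit coefficient. The paper carries this out via explicit Weyl reflections and unipotent conjugations case by case, whereas you invoke the transitivity of $GL(3,\rats)$ on nonzero vectors for $\alpha_1,\alpha_3$ and the rank classification for the $GL(2,\rats)\times GL(2,\rats)$ bi-action for $\alpha_2$; this is a cleaner way to phrase the same normalization step.
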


\begin{proof}
    Let us start with $\alpha = \alpha_1$ with Fourier coefficient $F_{\alpha_1}(\M; g)$ where
    \begin{equation}
        U_{\alpha_1}(\ads) = \left\{ 
        \begin{psmallmatrix}
            1 & u_1 & u_2 & u_3 \\
            & 1 & & \\
            & & 1 & \\
            & & & 1
        \end{psmallmatrix} : u_i \in \ads \right\}  \qquad \text{and} \qquad
        \M = 
        \begin{psmallmatrix}
            1 & m_1 & m_2 & m_3 \\
            & 1 & & \\
            & & 1 & \\
            & & & 1
        \end{psmallmatrix} \, .
    \end{equation}
    Since $\psi_U$ is assumed to be non-trivial at least one of the charges $m_i$ is non-zero. Without loss of generality we can assume that $m_3 = m_3' \neq 0$ since otherwise the non-zero charge can be moved to the position of $m_3$ using a Weyl reflection $w_1$ part of $L_{\alpha_1}(\rats)$.

    Then,
    \begin{equation}
        \begin{split}
            \MoveEqLeft
            F_{\alpha_1}\Big(
            \begin{psmallmatrix}
                1 & m_1 & m_2 & m_3' \\
                & 1 & & \\
                & & 1 & \\
                & & & 1
            \end{psmallmatrix}; w_1 g\Big) \\
            &= F_{\alpha_1}\Big(
            \begin{psmallmatrix}
                1 & & & m_3' \\
                & 1 & & \\
                & & 1 & \\
                & & & 1
            \end{psmallmatrix};
            \begin{psmallmatrix}
                1 & & & \\
                & 1 & & -m_1/m_3' \\
                & & 1 & -m_2/m_3' \\
                & & & 1
            \end{psmallmatrix} w_1 g\Big) \\
            &= \int E(\chi,
            \begin{psmallmatrix}
                1 & u_1 & u_2 & u_3 \\
                & 1 & & \\
                & & 1 & \\
                & & & 1
            \end{psmallmatrix}
            \begin{psmallmatrix}
                1 & & & \\
                & 1 & & -m_1/m_3' \\
                & & 1 & -m_2/m_3' \\
                & & & 1
            \end{psmallmatrix} w_1 g)
            \overline{\e(m_3' u_3)} \, d^3 u \\
            &= \int F_{(21^2)}(\chi, m_3';
            \begin{psmallmatrix}
                1 & u_1 & u_2 & \\
                & 1 & & \\
                & & 1 & \\
                & & & 1
            \end{psmallmatrix}
            \begin{psmallmatrix}
                1 & & & \\
                & 1 & & -m_1/m_3' \\
                & & 1 & -m_2/m_3' \\
                & & & 1
            \end{psmallmatrix} w_1 g) \, d^2 u
        \end{split}
    \end{equation}
    
    The fact that $\M$ could be conjugated to the form in the second line above with elements in $L_{\alpha_1}(\rats)$ tells us that $\M$ is in the orbit $G(\rats) \cdot \M_{\Oh_\text{min}}$.

    Similarly for $\alpha = \alpha_3$, we have a Fourier coefficient $F_{\alpha_3}(\M; g)$ with
    \begin{equation}
        U_{\alpha_3}(\ads) = \left\{
            \begin{psmallmatrix}
                1 & & & u_3 \\
                & 1 & & u_2 \\
                & & 1 & u_1 \\
                & & & 1
            \end{psmallmatrix} : u_i \in \ads \right\}
        \qquad \text{and} \qquad \M =
        \begin{psmallmatrix}
            1 & & & m_3 \\
            & 1 & & m_2 \\
            & & 1 & m_1 \\
            & & & 1
        \end{psmallmatrix}
    \end{equation}
    where, again, without loss of generality we may assume that $m_3 = m_3' \neq 0$ using $w_3$ in $L_{\alpha_3}(\rats)$. Thus, 
    \begin{equation}
        \begin{split}
            \MoveEqLeft
            F_{\alpha_3}\Big(
            \begin{psmallmatrix}
                1 & & & m_3' \\
                & 1 & & m_2 \\
                & & 1 & m_1 \\
                & & & 1
            \end{psmallmatrix}; w_3 g \Big) \\
            &= F_{\alpha_3}\Big(
            \begin{psmallmatrix}
                1 & & & m_3' \\
                & 1 & & \\
                & & 1 & \\
                & & & 1 
            \end{psmallmatrix};
            \begin{psmallmatrix}
                1 & m_2/m_3' & m_1/m_3' &  \\
                & 1 & & \\
                & & 1 & \\
                & & & 1
            \end{psmallmatrix} w_3 g \Big) \\
            &= \int E(\chi, 
            \begin{psmallmatrix}
                1 & & & u_3 \\
                & 1 & & u_2 \\
                & & 1 & u_1 \\
                & & & 1
            \end{psmallmatrix}
            \begin{psmallmatrix}
                1 & m_2/m_3' & m_1/m_3' &  \\
                & 1 & & \\
                & & 1 & \\
                & & & 1
            \end{psmallmatrix} w_3 g ) \overline{\e(m_3' u_3)} \, d^3 u \\
            &= \int F_{(21^2)}(\chi, m_3';
            \begin{psmallmatrix}
                1 & & & \\
                & 1 & & u_2 \\
                & & 1 & u_1 \\
                & & & 1 
            \end{psmallmatrix}
            \begin{psmallmatrix}
                1 & m_2/m_3' & m_1/m_3' &  \\
                & 1 & & \\
                & & 1 & \\
                & & & 1
            \end{psmallmatrix} w_3 g ) \, d^2 u
        \end{split}
    \end{equation}
    We see that $\M$ is in the orbit $G(\rats) \cdot \M_{\Oh_\text{min}}$.

    Finally, for $\alpha = \alpha_2$ we have a Fourier coefficient $F_{\alpha_2}(\M; g)$ with
    \begin{equation}
        U_{\alpha_2}(\ads) = \left\{
            \begin{psmallmatrix}
                1 & & u_1 & u_2 \\
                & 1 & u_3 & u_4 \\
                & & 1 & \\
                & & & 1
            \end{psmallmatrix} \right\} \qquad \text{and} \qquad
            \M =
            \begin{psmallmatrix}
                1 & & m_1 & m_2 \\
                & 1 & m_3 & m_4 \\
                & & 1 & \\
                & & & 1
            \end{psmallmatrix}
    \end{equation}
    
    If $m_1 m_4 - m_2 m_3 \neq 0$ the coefficient $F_{\alpha_2}$ is attached to $\Oh_\text{ntm}$ according to table \ref{tab:SL4-orbit-coefficients}, see also example~\ref{SL4NTM}. $\M$ is then trivially in $G(\rats) \cdot \M_{\Oh_\text{ntm}}$. 

    Let now $m_1 m_4 - m_2 m_3 = 0$. Since we have assumed that $\psi_U$ is non-trivial at least one of the charges is non-zero and using a Weyl reflection $w_2$ in $L_{\alpha_2}(\rats)$ we can assume $m_2 = m_2' \neq 0$.
    
    \begin{equation}
        \begin{split}
            \MoveEqLeft
            F_{\alpha_2}\Big(
            \begin{psmallmatrix}
                1 & & m_1 & m_2' \\
                & 1 & m_3 & m_4 \\
                & & 1 & \\
                & & & 1
            \end{psmallmatrix}; w_2 g\Big) \\
            &=
            F_{\alpha_2}\Big(
            \begin{psmallmatrix}
                1 & & & m_2' \\
                & 1 & m_3 - \frac{m_1 m_4}{m_2'} & \\
                & & 1 & \\
                & & & 1
            \end{psmallmatrix};
            \begin{psmallmatrix}
                1 & m_4/m_2' & & \\
                & 1 & & \\
                & & 1 & -m_1/m_2'\\
                & & & 1
            \end{psmallmatrix} w_2 g \Big) \\
            &=
            F_{\alpha_2}\Big(
            \begin{psmallmatrix}
                1 & & & m_2' \\
                & 1 & & \\
                & & 1 & \\
                & & & 1
            \end{psmallmatrix};
            \begin{psmallmatrix}
                1 & m_4/m_2' & & \\
                & 1 & & \\
                & & 1 & -m_1/m_2'\\
                & & & 1
            \end{psmallmatrix} w_2 g \Big) \\
            &= \int E(\chi,
            \begin{psmallmatrix}
                1 & & u_1 & u_2 \\
                & 1 & u_3 & u_4 \\
                & & 1 & \\
                & & & 1
            \end{psmallmatrix}
            \begin{psmallmatrix}
                1 & m_4/m_2' & & \\
                & 1 & & \\
                & & 1 & -m_1/m_2'\\
                & & & 1
            \end{psmallmatrix} w_2 g) \overline{\e(m_2' u_2)} \, d^4 u \\
            &= \int F_{(21^2)}(\chi, m_2'; 
            \begin{psmallmatrix}
                1 & & u_1 &  \\
                & 1 & u_3 & u_4 \\
                & & 1 & \\
                & & & 1
            \end{psmallmatrix}
            \begin{psmallmatrix}
                1 & m_4/m_2' & & \\
                & 1 & & \\
                & & 1 & -m_1/m_2'\\
                & & & 1
            \end{psmallmatrix} w_2 g) \, d^3 u
        \end{split}
    \end{equation}
    Again, $\M$ is seen to be in the orbit $G(\rats) \cdot \M_{\Oh_\text{min}}$.
\end{proof}

Let us draw some parallells to the proof of the main theorem in Miller-Sahi \cite{MillerSahi} discussed at the end of section \ref{sec:deg-Whittaker}. The statement that $\psi_U$ is $L$-conjugate to $\psi|_U$ as defined in \eqref{characterrestrict} is parallel to the statement that $\M$ is in the orbit $G(\rats) \cdot \M_{\Oh_\text{min}}$ after noting that all conjugations in the proof of theorem \ref{thm:SL4-maximal-parabolic} were done with elements in $L_\alpha(\rats)$. The statement for $\M$ is generalizable to the next-to-minimal representation and the orbit $G(\rats) \cdot \M_{\Oh_\text{ntm}}$. 

Using theorem \ref{thm:min-rep} and appendix \ref{app:SL4-F211-alternative-expansion} we find that $F_{\Oh_\text{min}}$ can be expanded in terms of maximally degenerate Whittaker vectors all charged on a single simple root $\alpha$ which can be chosen to be either $\alpha_1$, $\alpha_2$ or $\alpha_3$ and thus to correspond to the root defined by $\psi|_U$ and $\psi_\alpha$ in \eqref{characterrestrict}. This means that, in the minimal representation, Fourier coefficients on maximal parabolic subgroups defined by a root $\alpha$ can be expressed in terms of Whittaker vectors on $N$ with support only on $\alpha$.

We expect theorem \ref{thm:SL4-maximal-parabolic} to generalise for general Fourier coefficients beyond the case of maximal parabolic subgroups, that is, if $\M$ is in the orbit $\Oh$, then the Fourier coefficient $F(\M; g)$ associated to $\M$ should be linearly determined by the orbit coefficient $F_\Oh$.

Before concluding this section, let us also briefly discuss the next-to-next-to-minimal representation associated with the orbit $\Oh = (31)$. From table \ref{tab:SL4-Whittaker-as-orbits} we expect that the $F_{(31)}$ coefficient can be expanded in terms of Whittaker vectors $W_N(\chi, m_1', m_4', 0; g)$ and $W_N(\chi, 0, m_4', m_6'; g)$ together with $W_N(\chi, m_1', m_4', m_6'; g)$ attached to the orbit $(4)$ similar to $F_{(2^2)}$ in lemma \ref{lem:SL4-22-expanded} and $F_{(21^2)}$ in lemma \ref{lem:SL4-211-expanded}. This would mean that, in the next-to-next-to-minimal representation, $F_{(31)}$ would be expressed in terms of Whittaker vectors on $N$ charged on two non-commuting roots (type $A_2$).

\section{Spherical vectors for minimal representations of exceptional groups}

So far in this paper all our results have been global, i.e. pertaining to automorphic representations $\pi$ of adelic groups $G(\mathbb{A})$. In cases where $\pi$ factorises according to 
\beq 
\pi=\pi_{\infty} \otimes \bigotimes_{p<\infty} \pi_p, 
\eeq
one can sometimes obtain local results for spherical vectors $f_p^\circ$ of $\pi_p$ by factoring global vectors in $\pi$. In this section we shall use this fact to show that our results can be used to calculate local spherical Whittaker vectors associated with minimal automorphic representations of the exceptional groups $E_6, E_7, E_8$. In particular, we shall prove Propositions \ref{prop:E6}, \ref{prop:E7} and \ref{prop:E8} stated in the introduction. These results constitute strong support for our assertion that  theorems \ref{thm:orbit-exp}, \ref{thm:min-rep}, \ref{thm:ntm-rep} can be extended to all simply-laced, simple Lie groups. 

\subsection{Spherical vectors}
Consider the local principal series $\text{Ind}_{B(\mathbb{Q}_p)}^{G(\mathbb{Q}_p)}\chi_p$ for $G(\mathbb{Q}_p)$ a split, reductive algebraic group over $\mathbb{Q}_p$ and $B=NA$ its Borel subgroup. At the infinite place $p=\infty$, $G(\mathbb{Q}_\infty)=G(\mathbb{R})$ is a split real Lie group. Up to scalar multiple there is a unique spherical standard section $f_p^{\circ}\in \text{Ind}_{B(\mathbb{Q}_p)}^{G(\mathbb{Q}_p)}\chi_p$ defined by 
\beq
f^{\circ}_p(nak) =\chi_p(na)=\chi_p(a).
\eeq
The module $\text{Ind}_{B(\mathbb{Q}_p)}^{G(\mathbb{Q}_p)}\chi_p$ has an embedding
\beq
  \text{Ind}_{B(\mathbb{Q}_p)}^{G(\mathbb{Q}_p)}\chi_p \hookrightarrow \text{Ind}_{N(\mathbb{Q})}^{G(\mathbb{Q}_p)} \psi_p, 
\eeq
where $\psi_p$ is a unitary character on the unipotent radical $N(\mathbb{Q}_p)$. The image of the principal series representation inside $\text{Ind}_{N(\mathbb{Q})}^{G(\mathbb{Q}_p)} \psi_p$ is called a \emph{Whittaker model}, denoted by $Wh_{\psi_p}$, and its elements are \emph{Whittaker vectors}. Associated with $f_p^{\circ}$ there is a unique \emph{spherical Whittaker vector} defined by
\beq
\label{sphWh}
W_{\psi_p}^{\circ}(\chi_p, g):=\int_{N(\mathbb{Q}_p)} \chi_p(w_0 ng)\overline{\psi_p(n)}dn,
\eeq
satisfying
\beq
W^{\circ}_{\psi_p}(\chi_p, ngk)=\psi_p(n)W^{\circ}_{\psi_p}(\chi_p, g).
\eeq
Formula~\eqref{sphWh} is the local analogue of~\eqref{Wgeneric} for generic characters $\psi_p$. When $\psi_p$ is degenerate, one has to start from the definition~\eqref{Wvector} and evaluate $W^\circ_{\psi_p}(\chi_p,g)$ using the formalism developed in~\cite{FKP}.

For finite places $p<\infty$ there is a general formula for $W_{\psi_p}^{\circ}$ for generic $\psi_p$ due to Shintani \cite{Shintani} and Casselman-Shalika \cite{CasselmanShalika}, famously given in terms of characters of the Langlands dual group ${}^LG(\mathbb{C})$. In the special case of $SL(2,\mathbb{Q}_p)$ this formula reads
\beq
W^{\circ}_{\psi_p}(s, v)=|v|_p^{-2s+2}\gamma_p(mv^2)(1-p^{-2s})\frac{1-p^{-2s+1}|mv^2|_p^{2s-1}}{1-p^{-2s+1}},
\eeq
with $m\in \mathbb{Q}_p^{\times}$ parametrising the character $\psi_p$  and we parametrized the torus $A(\mathbb{Q}_p)$ by 
\beq
\begin{pmatrix} v & \\ & v^{-1} \end{pmatrix}, \qquad v\in \mathbb{Q}_p^*.
\eeq
At the real place $p=\infty $ no such general formula exists for arbitrary reductive groups, although for $SL(n, \mathbb{R})$, Stade \cite{MR1047756} has found a formula in terms of nested integrals over products of modified Bessel functions (generalising a famous formula of Bump and Vinogradov-Takhtajan for $SL(3,\mathbb{R})$).

\subsection{Minimal representations of exceptional groups}
For induced representations $\text{Ind}_{P(\mathbb{Q}_p)}^{G(\mathbb{Q}_p)}\chi_{P,p}$ associated with other standard parabolic subgroups $P=LU$ much less is known about the associated spherical vectors. In general one does not expect a multiplicity one theorem in this case, i.e. the space of spherical vectors inside $\text{Ind}_{U(\mathbb{Q}_p)}^{G(\mathbb{Q}_p)} \psi_{U, p}$ might be more than one-dimensional. However, the situation improves for minimal representations, i.e. those with smallest non-trivial Gelfand-Kirillov dimension. It is known that the minimal representation $\pi_{min,p}$ can always be realised as a sub-module inside $\text{Ind}_{P(\mathbb{Q}_p)}^{G(\mathbb{Q}_p)}\chi_{P,p}$ for $P$ a maximal parabolic and $\chi_{P,p}$ chosen suitably. When $P$ is maximal, $\chi_{P, p}$ can be parametrised by a single complex variable $s$ and for some special value $s=s_0$ one has 
\beq
\pi_{min}\subset \text{Ind}_{P(\mathbb{Q}_p)}^{G(\mathbb{Q}_p)}\chi_{P, p}(s)\Big|_{s=s_0}\hookrightarrow \text{Ind}_{U(\mathbb{Q}_p)}^{G(\mathbb{Q}_p)} \psi_{U, p}.
\eeq
For the minimal representation one has again a multiplicity one theorem (see \cite{GanSavin}) and so there is a unique spherical vector $F_{U}^{\circ}\in \text{Ind}_{U(\mathbb{Q}_p)}^{G(\mathbb{Q}_p)} \psi_{U, p}$. 

In what follows we shall discuss explicit examples of spherical vectors in different realisations of minimal representations of $E_6, E_7, E_8$. For later use we record the functional dimensions of these minimal representations below:
\beq
\text{GKdim}\, \pi_{min} = \left\{\begin{array}{cc} 
11, & \hspace{.3cm} E_6\\
17, & \hspace{.3cm} E_7 \\
29, & \hspace{.3cm} E_8 \\ 
\end{array}\right.
\eeq
By referring back to example~\ref{E7minorb} we note that the (Gelfand--Kirillov) dimension of the minimal representation is half the (complex) dimension of the minimal nilpotent orbit of $E_7(\cx)$. This is a general feature of the wavefront set and special unipotent representations.

\subsubsection{Abelian realisation}

\paragraph{Finite places ($p<\infty$).} For the exceptional groups $E_6, E_7$ the $p$-adic spherical  vectors in the minimal representation have been found by Savin and Woodbury in \cite{SavinWoodbury}. They use a realisation of $\pi_{min,p}$ embedded in $\text{Ind}_{Q(\mathbb{Q}_p)}^{G(\mathbb{Q}_p)} \psi_{Q, p}$ where $P_Q=LQ$ is a maximal parabolic subgroup such that the unipotent radical $Q$ is \emph{abelian}. 
These unipotent radicals are associated with the following 3-gradings of the underlying Lie algebras 
\beqa
{}\mathfrak{e}_{6}&=&\mathfrak{g}_{-1}\oplus \mathfrak{g}_0\oplus \mathfrak{g}_1\, \, =\, \, {\bf 16}\oplus (\mathfrak{so}(5,5)\oplus {\bf 1}) \oplus {\bf 16}
\nn \\
{}\mathfrak{e}_{7}&=&\mathfrak{g}_{-1}\oplus \mathfrak{g}_0\oplus \mathfrak{g}_1\, \, =\, \, {\bf 27} \oplus (\mathfrak{e}_6\oplus {\bf 1}) \oplus {\bf 27},
\eqa
where the $\mathfrak{g}_1$-space is the Lie algebra of the unipotent radical $Q$. Savin and Woodbury then give the following formula for the minimal spherical vectors:
\beqa
{} \mathfrak{e}_6 &:& F_{Q,p}^{\circ}(x) = \frac{1-p^2|x|_p^{-2}}{1-p^2},
\nn \\
{} \mathfrak{e}_7 &:& F_{Q,p}^{\circ}(x) = \frac{1-p^3|x|_p^{-3}}{1-p^3}.
\label{SWvectors}
\eqa
We have evaluated the spherical vector at the identity $1\in A(\mathbb{Q}_p)$, and viewed $F^{\circ}_{\psi_{Q}, p}$ as a function $F^{\circ}_{Q, p}(x)$ of the ``charge'' $x\in \mathbb{Q}^{\times}$ which characterises $\psi_{Q, p}$ (along one direction in $[Q, Q]\backslash Q=Q$ since $Q$ is abelian).

\paragraph{Real place ($p=\infty$).}  At the real place, the spherical vector in the same realisation of the minimal representation of $E_6, E_7$ was obtained by Dvorsky and Sahi in \cite{DS} with the result:
\beqa
{}\mathfrak{e}_{6}&:&F_{Q, \infty}^{\circ}(x) =x^{-1}K_1(x),
\nn \\
{} \mathfrak{e}_7&:& F_{Q, \infty}^{\circ}(x) =x^{-3/2}K_{3/2}(x),
\label{DSvectors}
\eqa
where $K_t$ denotes the modified Bessel function.

\subsubsection{Heisenberg realisation}
There is another model for the minimal representation where the parabolic $P_U=MU$ is such that the unipotent radical $U$ is non-abelian. This realisation is associated with a 5-grading of the Lie algebra, and therefore also exists for $\mathfrak{e}_8$ (which does not admit a 3-grading). Specifically, we have: 
\beqa
{}\mathfrak{e}_{6}&=&{\bf 1} \oplus {\bf 20}\oplus (\mathfrak{sl}(6,\mathbb{R})\oplus {\bf 1}) \oplus {\bf 20}\oplus {\bf 1}
\nn \\
{}\mathfrak{e}_{7}&=&{\bf 1} \oplus {\bf 32}\oplus (\mathfrak{so}(6,6)\oplus {\bf 1}) \oplus {\bf 32}\oplus {\bf 1},
\nn \\
{}\mathfrak{e}_{8}&=&{\bf 1} \oplus {\bf 56}\oplus (\mathfrak{e}_7\oplus {\bf 1}) \oplus {\bf 56}\oplus {\bf 1}.
\label{5gradings}
\eqa
The Lie algebra of $U$ is the positive part of the grading, and so has the structure of a Heisenberg algebra, thereby explaining the name ``Heisenberg realisation''. In the physics literature, this realisation is called the quasi-conformal realisation~\cite{Gunaydin:2000xr}.

\paragraph{Finite places ($p<\infty$).}  At the finite places the minimal representation $\pi_{min,p}$ is  realised as a submodule of the induced representation $\text{Ind}_{U(\mathbb{Q}_p)}^{G(\mathbb{Q}_p)}\chi_{P_U, p}(s)$ for some value $s=s_0$. The values of $s_0$ can be found in~\cite{GRS}. Since $U$ is a Heisenberg group, there are Whittaker models associated with characters on $U$ which are trivial on the centre $\mathcal{Z}=[U,U]$ as well as one-dimensional characters which are non-trivial on $\mathcal{Z}$ (but trivial on $\mathcal{Z}\backslash U$). This is a consequence of the Stone-von-Neumann theorem. Denoting by $\psi_{\mathcal{Z}, p}$ a unitary character on $\mathcal{Z}$ we thus have two different function spaces in which to embed the minimal representation: 
\beq
\text{Ind}_{U(\mathbb{Q}_p)}^{G(\mathbb{Q}_p)} \psi_{U, p}, \qquad \text{or}\qquad \text{Ind}_{\mathcal{Z}(\mathbb{Q}_p)}^{G(\mathbb{Q}_p)} \psi_{\mathcal{Z}, p}.
\eeq
One can roughly think of the elements of the former as ``abelian spherical vectors'' and elements of the latter as ``non-abelian spherical vectors''. This is due to the fact that the latter will naturally depend on a variable along the centre of the Heisenberg group $\mathcal{Z}$, while the abelian vectors will only be functions on the abelianization $\mathcal{Z}\backslash U$.

Kazhdan and Polishchuk \cite{KazhdanPolishchuk} have constructed non-abelian $p$-adic spherical vectors for $E_6, E_7, E_8$. These spherical vectors depend on a set of rational variables $(y, x_0, x_1, \dots, x_n)\in {\mathbb{Q}}^{n+2}$ where $n=9, 15, 27$ for $E_6, E_7, E_8$, respectively. The variable $y$ parametrizes the centre $\mathfrak{g}_2$ of the Heisenberg algebra, while $(x_0, x_1, \dots, x_n)$ parametrize a Lagrangian subspace of $\mathfrak{g}_1$. We denote these $p$-adic non-abelian spherical vectors by $F^{na}_{U,p}(y; x_0, x_1, \dots, x_n)$. Kazhdan and Polishchuk show that at the locus $x_0\neq0$ one can consistently take an abelian limit $y\to 0$, yielding abelian spherical vectors $F_{U,p}(x_0, x_1, \dots, x_n)$. Restricting further to the locus where all $x_i=0$, $i=1, \dots, n$, one can extract the following form of the spherical vectors from \cite{KazhdanPolishchuk}:
\beqa
{}\mathfrak{e}_{6}&:& F_{U,p}(x_0)=|x_0|_p^{-2} \frac{1-p|x_0|_p^{-1}}{1-p}
\nn \\ 
{} \mathfrak{e}_7&:&F_{U,p}(x_0)=|x_0|_p^{-3} \frac{1-p^{2}|x_0|_p^{-2}}{1-p^2}
\nn \\
{} \mathfrak{e}_8&:&F_{U,p}(x_0)=|x_0|_p^{-5} \frac{1-p^{4}|x_0|_p^{-4}}{1-p^4}.
\label{KPvectors}
\eqa
The further restriction to dependence only on $x_0$ is in line with taking a representative of the $M$-orbit on $U$ on the single simple root space that defines the maximal parabolic $P_U$.

\paragraph{Real place ($p=\infty$).} The real spherical vectors in the Heisenberg realisation of the minimal representations of $E_6, E_7, E_8$ were obtained by Kazhdan-Pioline-Waldron in \cite{Kazhdan:2001nx}. With similar manipulations as above one can extract the abelian limit of these spherical vectors with the result:
\beqa
{}\mathfrak{e}_{6}&:& F_{U,\infty}(x_0)=x_0^{-5/2} K_{1/2}(x_0)
\nn \\ 
{} \mathfrak{e}_7&:&F_{U,\infty}(x_0)=x_0^{-4} K_1(x_0)
\nn \\
{} \mathfrak{e}_8&:&F_{U,\infty}(x_0)=x_0^{-7} K_2(x_0). 
\label{KPWvectors}
\eqa

\subsection{Relation with degenerate Whittaker vectors}
\label{sec_proofprop}
The spherical vectors analyzed in the previous subsections should be viewed as Fourier coefficients with respect to the relevant unipotent radicals. For example, suppose we have an automorphic form $\varphi$ in the minimal representation $\pi_{min}$ of $E_6, E_7, E_8$. We can then consider its non-abelian Fourier expansion with respect to the Heisenberg unipotent radical $U$. Schematically this expansion takes the form
\beqa
\varphi&=& \sum_{\substack{y\in \mathbb{Q}^{\times} \\ (x_0, x_1, \dots, x_n)\in \mathbb{Q}^{n+1}}} \Bigg[\prod_{p<\infty} F^{na}_{U,p}(y;x_0, x_1, \dots, x_n)\Bigg] F^{na}_{U, \infty}(y;x_0, x_1, \dots, x_n) 
\nn \\
& & + \sum_{(x_0, x_1, \dots x_n)\in \mathbb{Q}^{n+1}} \Bigg[\prod_{p<\infty}F_{U,p}(x_0, x_1, \dots, x_n)\Bigg] F_{U, \infty}(x_0, x_1, \dots, x_n),
\eqa
where we have suppressed the constant terms. The coefficients in this expansion correspond precisely to the spherical vectors in the Heisenberg realisation of the minimal representation. 

Similarly, if we expand $\varphi$ along the maximal abelian unipotent radical $Q$ (now restricting to $E_6, E_7$) the expansion instead takes the form 
\beq
\varphi=\sum_{(x_0, x_1, \dots, x_r)\in \mathbb{Q}^{r+1}} \Bigg[\prod_{p<\infty}F_{Q, p}(x_0, x_1, \dots, x_r)\Bigg]F_{Q, \infty}(x_0, x_1, \dots, x_r),
\eeq
where $r=10, 16$ for $E_6, E_7,$ respectively. The Fourier coefficients are in this case purely abelian and correspond to the spherical vectors in the Savin-Woodbury realisation of the minimal representation.

Following the arguments of section \ref{sec:deg-Whittaker} we expect that the abelian spherical vectors in the minimal representation should be captured by certain maximally degenerate Whittaker vectors along the maximal unipotent radical $N$ (associated with the Borel subgroup).

As mentioned in the introduction, one can realise the minimal representation of $G=E_6,E_7,E_8$ using the maximal parabolic Eisenstein series associated with node $1$ of the Dynkin diagram in Bourbaki labelling, see~\cite{GreenSmallRep} (following \cite{GRS}). There is a one parameter family $E(s,g)$ of such Eisenstein series and for $s=3/2$ the series belongs to the minimal representation.  All maximally degenerate Whittaker vectors of $E(3/2, g)$ were computed in \cite{FKP} and we now wish to compare those results with the minimal spherical vectors discussed above. 

\subsubsection{Proof of Proposition 1.1}  

In this section we consider the case $G=E_6$. Let us begin with the abelian case corresponding to the Savin-Woodbury realisation of the minimal representation. The unipotent radical $Q$ contains a single simple root which is $\alpha_1$ in Bourbaki labelling. We therefore expect the associated spherical vector to arise from a degenerate Whittaker vector on $N$ which is non-trivial only along this simple root. This is the Whittaker vector labelled by the vector $(m, 0, 0, 0, 0, 0)$ in Table A.1 of \cite{FKP}. Here, $m\in \mathbb{Q}$ parametrises the maximally degenerate unitary character $\psi_{\alpha_1}$ on $N$. From Table A.1 of~\cite{FKP} we then find the following expression for the degenerate Whittaker vector along $\alpha_1$:
\beq
W_{\psi_{\alpha_1}}(3/2, 1)=\frac{2}{\xi(3)}|m|^{-1} \sigma_2(m) K_1(m).
\eeq
As is well-known, the divisor sum $\sigma_s(m)$ can be written as an Euler product  for integer $m$ and $s\in\mathbb{C}$ according to
\begin{align}
\sigma_s(m) = \prod_{p<\infty} \frac{1-p^s |m|_p^{-s}}{1-p^s}.
\end{align}
Inserting this into the expression for the Whittaker vector, we get
\beq
W_{\psi_{\alpha_1}}(3/2, 1)=\frac{2}{\xi(3)} \left[\prod_{p<\infty}\frac{1-p^2|m|_p^{-2}}{1-p^2}\right] |m|^{-1}K_1(m),
\eeq
where we recognise Savin and Woodbury's p-adic spherical vector $F_{Q,p}^{\circ}(m)$ inside the brackets and the Dvorsky-Sahi real spherical vector $F_{Q, \infty}^{\circ}(m)$ outside the brackets. We conclude that in this case the global degenerate Whittaker vector indeed reproduces the spherical vector in the minimal representation at all local places.

Let us now also study the Heisenberg realisation. In this case the unipotent radical $U$ contains the simple root $\alpha_2$. Degenerate Whittaker vectors non-trivial along this root correspond to $(0, m, 0, 0, 0, 0)$ in Table A.1 of \cite{FKP} and we find 
\begin{align}
W_{\psi_{\alpha_2}}(3/2, 1)&=\frac{2}{\xi(3)} |m|^{-1/2} \sigma_1(m) K_{1/2}(m)\nn\\
&=\frac{2}{\xi(3)} \left[\prod_{p<\infty} |m|_p^{-2} \frac{1-p|m|_p^{-1}}{1-p}\right] |m|^{-5/2} K_{1/2}(m),
\end{align}
where in the second equality we used the fact that $\prod_{p<\infty} |m|_p^{-2} =|m|^2$. Comparing this with eqs. (\ref{KPvectors}) and (\ref{KPWvectors}) we see that $p$-adic and real spherical vectors are correctly reproduced. {\flushright{$\square$}\\}

\subsubsection{Proof of Proposition 1.2}

Let now $G=E_7$. The abelian unipotent radical $Q$ of $E_7$ arises from the decomposition with respect to node $7$ in the Dynkin diagram. Hence the degenerate Whittaker vector which is relevant in this case is $W_{\psi_{\alpha_7}}$; this is the case $(0, 0, 0, 0, 0, 0, m)$ in Table A.2 of \cite{FKP}, yielding
\beq
W_{\psi_{\alpha_7}}(3/2, 1)=\frac{2}{\xi(4)} |m|^{-3/2} \sigma_3(m) K_{3/2}(m)=\frac{2}{\xi(4)} \left[\prod_{p<\infty} \frac{1-p^3|m|_p^{-3}}{1-p^3}\right] |m|^{-3/2} K_{3/2}(m).
\eeq
Inside the bracket we now recognise the Savin-Woodbury p-adic spherical vector $F_{Q,p}^{\circ}(m)$ in (\ref{SWvectors}) and outside the brackets we recognise the Dvorsky-Sahi real spherical vector $F_{Q,\infty}^{\circ}(m)$ in (\ref{DSvectors}). Thus, also for the abelian realisation of the minimal representation of $E_7$ everything is correctly reproduced. 

Now, let us turn to the Heisenberg realisation. The Heisenberg unipotent radical $U$ of $E_7$ corresponds to the decomposition with respect to node 1, and hence we should compare with the degenerate Whittaker vector $(m, 0, 0, 0, 0, 0, 0)$ in Table A.2 of \cite{FKP}. From there we find 
\beq
W_{\psi_{\alpha_1}}(3/2, 1)= \frac{2}{\xi(3)} |m|^{-1} \sigma_2(m) K_1(m)=\frac{2}{\xi(3)} \left[\prod_{p<\infty} |m|_p^{-3} \frac{1-p^2|m|_p^{-2}}{1-p^2}\right]|m|^{-4} K_1(m).
\eeq
Again we see that the spherical vectors in (\ref{KPvectors}) and (\ref{KPWvectors}) are correctly reproduced. {\flushright{$\square$}\\}

\subsubsection{Proof of Proposition 1.3}

Finally we consider the case $G=E_8$. The group $E_8$ does not exhibit a 3-grading so the only realisation of the minimal representation is the one associated with the 5-grading (\ref{5gradings}). The unipotent radical $U$ is a 57-dimensional Heisenberg group which contains the simple root $\alpha_8$. Thus the relevant degenerate Whittaker vector is $(0, 0, 0, 0, 0, 0, 0, m)$ in Table A.3 of \cite{FKP}. This yields 
\begin{align}
W_{\psi_{\alpha_8}}(3/2, 1)&= \frac{2 \xi(4)}{\xi(3)\xi(5)} |m|^{-2}\sigma_4(m)K_2(m)\nn\\
&=\frac{2 \xi(4)}{\xi(3)\xi(5)}\left[\prod_{p<\infty} |m|_p^{-5}\frac{1-p^4|m|_p^{-4}}{1-p^4}\right] |m|^{-7}K_2(m).
\end{align}
As before this matches nicely with (\ref{KPvectors}) and (\ref{KPWvectors}).{\flushright{$\square$}\\}

\pagebreak

\appendix

\section{\texorpdfstring{$SL(4)$}{SL(4)} Whittaker vectors in terms of orbit coefficients}
\label{app:SL4-Whittaker-as-orbits}

The constant term and the generic Whittaker vectors on $N$ are directly attached to the trivial and regular orbits respectively.

\paragraph{Maximally degenerate Whittaker vectors on $N$}

\begin{equation}
    \begin{split}
        \MoveEqLeft 
        \int F_{(21^2)}(\chi, m_1'; 
        \begin{psmallmatrix}
            1 & & & \\
            & & & -1 \\
            & & 1 & \\
            & 1 & &
        \end{psmallmatrix}
        \begin{psmallmatrix}
            1 & & u_2 & u_3 \\
            & 1 & u_4 & u_5 \\
            & & 1 & u_6 \\
            & & & 1
        \end{psmallmatrix}
        g) \, d^5u \\
        &= \int E(\chi,
        \begin{psmallmatrix}
            1 & x_1 & & \\
            & 1 & & \\
            & & 1 & \\
            & & & 1
        \end{psmallmatrix}
        \begin{psmallmatrix}
            1 & & u_2 & u_3 \\
            & 1 & u_4 & u_5 \\
            & & 1 & u_6 \\
            & & & 1
        \end{psmallmatrix}
        g) \overline{\e(m_1' x_1)} \, dx_1 \, d^5u \\
        &= \int E(\chi,
        \begin{psmallmatrix}
            1 & x_1 & x_2 & x_3 \\
            & 1 & x_4 & x_5 \\
            & & 1 & x_6 \\
            & & & 1
        \end{psmallmatrix}
        g) \overline{\e(m_1' x_1)} \, d^6x = W_N(\chi, m_1', 0, 0; g)
    \end{split}
\end{equation}
where $x_2 = u_2 + u_4 x_1$, $x_3 = u_3 + u_5 x_1$ and the rest $x_i = u_i$.

\begin{equation}
    \begin{split}
        \MoveEqLeft 
        \int F_{(21^2)}(\chi, m_4'; 
        \begin{psmallmatrix}
            & 1 & & \\
            1 & & & \\
            & & & 1 \\
            & & 1 &
        \end{psmallmatrix}
        \begin{psmallmatrix}
            1 & u_1 & u_2 & u_3 \\
            & 1 & & u_5 \\
            & & 1 & u_6 \\
            & & & 1
        \end{psmallmatrix}
        g) \, d^5u \\
        &= \int E(\chi,
        \begin{psmallmatrix}
            1 & & & \\
            & 1 & x_4 & \\
            & & 1 & \\
            & & & 1
        \end{psmallmatrix}
        \begin{psmallmatrix}
            1 & u_1 & u_2 & u_3 \\
            & 1 & & u_5 \\
            & & 1 & u_6 \\
            & & & 1
        \end{psmallmatrix}
        g) \overline{\e(m_4' x_4)} \, dx_4 \, d^5u \\
        &= \int E(\chi,
        \begin{psmallmatrix}
            1 & x_1 & x_2 & x_3 \\
            & 1 & x_4 & x_5 \\
            & & 1 & x_6 \\
            & & & 1
        \end{psmallmatrix}
        g) \overline{\e(m_4' x_4)} \, d^6x = W_N(\chi, 0, m_4', 0; g) 
    \end{split}
\end{equation}
where $x_5 = u_5 + u_6 x_4$ and the rest $x_i = u_i$.

\begin{equation}
    \begin{split}
        \MoveEqLeft 
        \int F_{(21^2)}(\chi, m_6'; 
        \begin{psmallmatrix}
            & & 1 & \\
            & 1 & & \\
            -1 & & & \\
            & & & 1
        \end{psmallmatrix}
        \begin{psmallmatrix}
            1 & u_1 & u_2 & u_3 \\
            & 1 & u_4 & u_5 \\
            & & 1 & \\
            & & & 1
        \end{psmallmatrix}
        g) \, d^5u \\
        &= \int E(\chi,
        \begin{psmallmatrix}
            1 & & & \\
            & 1 & & \\
            & & 1 & x_6 \\
            & & & 1
        \end{psmallmatrix}
        \begin{psmallmatrix}
            1 & u_1 & u_2 & u_3 \\
            & 1 & u_4 & u_5 \\
            & & 1 & \\
            & & & 1
        \end{psmallmatrix}
        g) \overline{\e(m_6' x_6)} \, dx_6 \, d^5u \\
        &= \int E(\chi,
        \begin{psmallmatrix}
            1 & x_1 & x_2 & x_3 \\
            & 1 & x_4 & x_5 \\
            & & 1 & x_6 \\
            & & & 1
        \end{psmallmatrix}
        g) \overline{\e(m_6' x_6)} \, d^6x = W_N(\chi, 0, 0, m_6'; g) 
    \end{split}
\end{equation}
where $x_i = u_i$.

\pagebreak

\paragraph{Remaining degenerate Whittaker vectors on $N$}

\begin{equation}
    \begin{split}
        \MoveEqLeft
        \sum_m \int F_{(31)}(\chi, m_1', 0, m_4', m;
        \begin{psmallmatrix}
            1 & & & \\
            & 1 & & \\
            & & & -1 \\
            & & 1 &
        \end{psmallmatrix}
        \begin{psmallmatrix}
            1 & & & \\
            & 1 & & u_5 \\
            & & 1 & u_6 \\
            & & & 1
        \end{psmallmatrix}
        g) \, d^2 u \\
        &= \sum_m \int E(\chi, 
        \begin{psmallmatrix}
            1 & x_1 & x_3 & x_2 \\
            & 1 & & x_4 \\
            & & 1 & x_0 \\
            & & & 1
        \end{psmallmatrix} 
        \begin{psmallmatrix}
            1 & & & \\
            & 1 & & \\
            & & & -1 \\
            & & 1 &
        \end{psmallmatrix}
        \begin{psmallmatrix}
            1 & & & \\
            & 1 & & u_5 \\
            & & 1 & u_6 \\
            & & & 1
        \end{psmallmatrix}
        g) \overline{\e(m_1' x_1 + m_4' x_4 + m x_0)} \, d^2 u \, d^5x \\
        &= \int E(\chi, 
        \begin{psmallmatrix}
            1 & x_1 & x_3 & x_2 \\
            & 1 & & x_4 \\
            & & 1 & \\
            & & & 1
        \end{psmallmatrix} 
        \begin{psmallmatrix}
            1 & & & \\
            & 1 & & \\
            & & & -1 \\
            & & 1 &
        \end{psmallmatrix}
        \begin{psmallmatrix}
            1 & & & \\
            & 1 & & u_5 \\
            & & 1 & u_6 \\
            & & & 1
        \end{psmallmatrix}
        g) \overline{\e(m_1' x_1 + m_4' x_4)} \, d^2 u \, d^4x \\
        &= \int E(\chi, 
        \begin{psmallmatrix}
            1 & x_1 & x_2 & -x_3 \\
            & 1 & x_4 & \\
            & & 1 & \\
            & & & 1
        \end{psmallmatrix} 
        \begin{psmallmatrix}
            1 & & & \\
            & 1 & & u_5 \\
            & & 1 & u_6 \\
            & & & 1
        \end{psmallmatrix}
        g) \overline{\e(m_1' x_1 + m_4' x_4)} \, d^2 u \, d^4x \\
        &= \int E(\chi, 
        \begin{psmallmatrix}
            1 & & & -x_3 \\
            & 1 & & \\
            & & 1 & \\
            & & & 1
        \end{psmallmatrix}
        \begin{psmallmatrix}
            1 & x_1 & x_2 & \\
            & 1 & x_4 & \\
            & & 1 & \\
            & & & 1
        \end{psmallmatrix} 
        \begin{psmallmatrix}
            1 & & & \\
            & 1 & & u_5 \\
            & & 1 & u_6 \\
            & & & 1
        \end{psmallmatrix}
        g) \overline{\e(m_1' x_1 + m_4' x_4)} \, d^2 u \, d^4x  
    \end{split}
\end{equation}

Since the integrand is invariant under discrete shifts in $x_3$ we can make the substitution $x_3 \to -x_3$ without introducing an overall minus sign if we keep the integral domain $\rats \bs \ads$ unchanged. Thus, the above expression becomes
\begin{equation}
    \begin{split}
        \MoveEqLeft
        \int E(\chi, 
        \begin{psmallmatrix}
            1 & & & x_3 \\
            & 1 & & \\
            & & 1 & \\
            & & & 1
        \end{psmallmatrix}
        \begin{psmallmatrix}
            1 & x_1 & x_2 & \\
            & 1 & x_4 & \\
            & & 1 & \\
            & & & 1
        \end{psmallmatrix} 
        \begin{psmallmatrix}
            1 & & & \\
            & 1 & & u_5 \\
            & & 1 & u_6 \\
            & & & 1
        \end{psmallmatrix}
        g) \overline{\e(m_1' x_1 + m_4' x_4)} \, d^2 u \, d^4x \\
        &= \int E(\chi, 
        \begin{psmallmatrix}
            1 & x_1 & x_2 & x_3 \\
            & 1 & x_4 & x_5\\
            & & 1 & x_6\\
            & & & 1
        \end{psmallmatrix} 
        g) \overline{\e(m_1' x_1 + m_4' x_4)} \, d^2 u \, d^4x = W_N(\chi, m_1', m_4', 0; g)
    \end{split}
\end{equation}
where we have made the substitutions $x_3 + u_5 x_1 + u_6 x_2 \to x_3$, $u_5 + u_6 x_4 \to x_5$ and $u_6 \to x_6$.
\begin{equation}
    \begin{split}
        \MoveEqLeft
        \sum_m \int F_{(31)}(\chi, m, m_4', 0, m_6';
        \begin{psmallmatrix}
            & 1 & & \\
            -1 & & & \\
            & & 1 & \\
            & & & 1
        \end{psmallmatrix}
        \begin{psmallmatrix}
            1 & u_1 & u_2 & \\
            & 1 & & \\
            & & 1 & \\
            & & & 1
        \end{psmallmatrix}
        g) \, d^2 u \\
        &= \sum_m \int E(\chi, 
        \begin{psmallmatrix}
            1 & x_0 & x_4 & x_5 \\
            & 1 & & x_3 \\
            & & 1 & x_6 \\
            & & & 1
        \end{psmallmatrix}
        \begin{psmallmatrix}
            & 1 & & \\
            -1 & & & \\
            & & 1 & \\
            & & & 1
        \end{psmallmatrix}
        \begin{psmallmatrix}
            1 & u_1 & u_2 & \\
            & 1 & & \\
            & & 1 & \\
            & & & 1
        \end{psmallmatrix}
        g) \overline{\e(m x_0 + m_4' x_4 + m_6' x_6)} \, d^2u \, d^5x \\
        &= \int E(\chi, 
        \begin{psmallmatrix}
            1 &  & x_4 & x_5 \\
            & 1 & & x_3 \\
            & & 1 & x_6 \\
            & & & 1
        \end{psmallmatrix}
        \begin{psmallmatrix}
            & 1 & & \\
            -1 & & & \\
            & & 1 & \\
            & & & 1
        \end{psmallmatrix}
        \begin{psmallmatrix}
            1 & u_1 & u_2 & \\
            & 1 & & \\
            & & 1 & \\
            & & & 1
        \end{psmallmatrix}
        g) \overline{\e(m_4' x_4 + m_6' x_6)} \, d^2u \, d^4x \\
        &= \int E(\chi, 
        \begin{psmallmatrix}
            1 & & & -x_3 \\
            & 1 & x_4 & x_5 \\
            & & 1 & x_6 \\
            & & & 1
        \end{psmallmatrix}
        \begin{psmallmatrix}
            1 & u_1 & u_2 & \\
            & 1 & & \\
            & & 1 & \\
            & & & 1
        \end{psmallmatrix}
        g) \overline{\e(m_4' x_4 + m_6' x_6)} \, d^2u \, d^4x \\
        &= \int E(\chi, 
        \begin{psmallmatrix}
            1 & & & x_3 \\
            & 1 & x_4 & x_5 \\
            & & 1 & x_6 \\
            & & & 1
        \end{psmallmatrix}
        \begin{psmallmatrix}
            1 & u_1 & u_2 & \\
            & 1 & & \\
            & & 1 & \\
            & & & 1
        \end{psmallmatrix}
        g) \overline{\e(m_4' x_4 + m_6' x_6)} \, d^2u \, d^4x \\
        &= \int E(\chi, 
        \begin{psmallmatrix}
            1 & x_1 & x_2 & x_3 \\
            & 1 & x_4 & x_5 \\
            & & 1 & x_6 \\
            & & & 1
        \end{psmallmatrix}
        g) \overline{\e(m_4' x_4 + m_6' x_6)} \, d^2u \, d^4x = W_N(\chi, 0, m_4', m_6'; g)
    \end{split}
\end{equation}
where $x_1 = u_1$ and $x_2 = u_2$.

\begin{equation}
    \begin{split}
        \MoveEqLeft
        \sum_m \int F_{(2^2)}(\chi, -m_1', 0, m, m_6'; 
        \begin{psmallmatrix}
            -1 & & & \\
            & & 1 & \\
            & 1 & & \\
            & & & 1
        \end{psmallmatrix}
        \begin{psmallmatrix}
            1 & & u_2 & \\
            & 1 & u_4 & u_5 \\
            & & 1 & \\
            & & & 1
        \end{psmallmatrix}
        g) \, d^3u \\
        &= \sum_m \int E(\chi,
        \begin{psmallmatrix}
            1 & & x_1 & x_3 \\
            & 1 & x_0 & x_6 \\
            & & 1 & \\
            & & & 1
        \end{psmallmatrix}
        \begin{psmallmatrix}
            -1 & & & \\
            & & 1 & \\
            & 1 & & \\
            & & & 1
        \end{psmallmatrix}
        \begin{psmallmatrix}
            1 & & u_2 & \\
            & 1 & u_4 & u_5 \\
            & & 1 & \\
            & & & 1
        \end{psmallmatrix}
        g) \overline{\e(m x_0 - m_1' x_1 + m_6' x_6)} \, d^3u \, d^4x \\
        &= \int E(\chi,
        \begin{psmallmatrix}
            1 & & x_1 & x_3 \\
            & 1 & & x_6 \\
            & & 1 & \\
            & & & 1
        \end{psmallmatrix}
        \begin{psmallmatrix}
            -1 & & & \\
            & & 1 & \\
            & 1 & & \\
            & & & 1
        \end{psmallmatrix}
        \begin{psmallmatrix}
            1 & & u_2 & \\
            & 1 & u_4 & u_5 \\
            & & 1 & \\
            & & & 1
        \end{psmallmatrix}
        g) \overline{\e(-m_1' x_1 + m_6' x_6)} \, d^3u \, d^3x \\
        &= \int E(\chi,
        \begin{psmallmatrix}
            1 & -x_1 & & -x_3 \\
            & 1 & & \\
            & & 1 & x_6 \\
            & & & 1
        \end{psmallmatrix}
        \begin{psmallmatrix}
            1 & & u_2 & \\
            & 1 & u_4 & u_5 \\
            & & 1 & \\
            & & & 1
        \end{psmallmatrix}
        g) \overline{\e(-m_1' x_1 + m_6' x_6)} \, d^3u \, d^3x \\
        &= \int E(\chi,
        \begin{psmallmatrix}
            1 & x_1 & & x_3 \\
            & 1 & & \\
            & & 1 & x_6 \\
            & & & 1
        \end{psmallmatrix}
        \begin{psmallmatrix}
            1 & & u_2 & \\
            & 1 & u_4 & u_5 \\
            & & 1 & \\
            & & & 1
        \end{psmallmatrix}
        g) \overline{\e(m_1' x_1 + m_6' x_6)} \, d^3u \, d^3x \\
        &= \int E(\chi,
        \begin{psmallmatrix}
            1 & x_1 & x_2 & x_3 \\
            & 1 & x_4 & x_5 \\
            & & 1 & x_6 \\
            & & & 1
        \end{psmallmatrix}
        g) \overline{\e(m_1' x_1 + m_6' x_6)} \, d^6x = W_N(\chi, m_1', 0, m_6'; g)
    \end{split}
\end{equation}
where we have made the substitution $u_2 + u_4 x_1 \to x_2$, $x_3 + u_5 x_1 \to x_3$ and the rest $u_i \to x_i$.

\paragraph{Whittaker vectors on $N'$}
\begin{equation}
    \begin{split}
        \MoveEqLeft
        \int F_{(21^2)}(\chi, m_1'; 
        \begin{psmallmatrix}
            1 & & & \\
            & 1 & & \\
            & & & -1 \\
            & & 1 &
        \end{psmallmatrix}
        \begin{psmallmatrix}
            1 & & & u_2 \\
            & 1 & & u_3 \\
            & & 1 & \\
            & & & 1
        \end{psmallmatrix}
        g) \, d^2u \\
        &= \int E(\chi, 
        \begin{psmallmatrix}
            1 & & x_1 & \\
            & 1 & & \\
            & & 1 & \\
            & & & 1
        \end{psmallmatrix}
        \begin{psmallmatrix}
            1 & & & u_2 \\
            & 1 & & u_3 \\
            & & 1 & \\
            & & & 1
        \end{psmallmatrix}
        g) \overline{\e(m_1' x_1)} \, d^2u \, dx_1 \\
        &= \int E(\chi, 
        \begin{psmallmatrix}
            1 & & x_1 & x_2 \\
            & 1 & & x_3 \\
            & & 1 & \\
            & & & 1
        \end{psmallmatrix}
        g) \overline{\e(m_1' x_1)} \, d^3x = W_{N'}(\chi, m_1', 0, 0; g)
    \end{split}
\end{equation}

\begin{equation}
    \begin{split}
        \MoveEqLeft
        \int F_{(21^2)}(\chi, m_3'; 
        \begin{psmallmatrix}
            & 1 & & \\
            -1 & & & \\
            & & 1 & \\
            & & & 1
        \end{psmallmatrix}
        \begin{psmallmatrix}
            1 & & u_1 & u_2 \\
            & 1 & & \\
            & & 1 & \\
            & & & 1
        \end{psmallmatrix}
        g) \, d^2u \\
        &= \int E(\chi, 
        \begin{psmallmatrix}
            1 & & & \\
            & 1 & & x_3 \\
            & & 1 & \\
            & & & 1
        \end{psmallmatrix}
        \begin{psmallmatrix}
            1 & & u_1 & u_2 \\
            & 1 & & \\
            & & 1 & \\
            & & & 1
        \end{psmallmatrix}
        g) \overline{\e(m_3' x_3)} \, d^2u \, dx_1 \\
        &= \int E(\chi, 
        \begin{psmallmatrix}
            1 & & x_1 & x_2 \\
            & 1 & & x_3 \\
            & & 1 & \\
            & & & 1
        \end{psmallmatrix}
        g) \overline{\e(m_3' x_3)} \, d^3x = W_{N'}(\chi, 0, 0, m_3'; g) 
    \end{split}
\end{equation}

\begin{equation}
    \begin{split}
        \MoveEqLeft
        \sum_m F_{(2^2)}(\chi, m_1', 0, m, m_3'; g) \\
        &= \sum_m \int E(\chi,
        \begin{psmallmatrix}
            1 & & x_1 & x_2 \\
            & 1 & x_0 & x_3 \\
            & & 1 & \\
            & & & 1
        \end{psmallmatrix}
        g) \overline{\e(m x_0 + m_1' x_1 + m_3' x_3)} \, d^4x \\
        &= \int E(\chi,
        \begin{psmallmatrix}
            1 & & x_1 & x_2 \\
            & 1 & & x_3 \\
            & & 1 & \\
            & & & 1
        \end{psmallmatrix}
        g) \overline{\e(m_1' x_1 + m_3' x_3)} \, d^3x = W_{N'}(\chi, m_1', 0, m_3'; g)
    \end{split}
\end{equation}

\begin{equation}
    \begin{split}
        \MoveEqLeft
        \int F_{(21^2)}(\chi, m_2'; 
        \begin{psmallmatrix}
            1 & & u_1 & \\
            & 1 & & u_3 \\
            & & 1 & \\
            & & & 1
        \end{psmallmatrix}
        \begin{psmallmatrix}
            1 & m_3/m_2' & & \\
            & 1 & & \\
            & & 1 & -m_1/m_2' \\
            & & & 1
        \end{psmallmatrix}
        g) \, d^2u \\
        &= \int E(\chi,
        \begin{psmallmatrix}
            1 & & u_1 & x_2 \\
            & 1 & & u_3 \\
            & & 1 & \\
            & & & 1
        \end{psmallmatrix}
        \begin{psmallmatrix}
            1 & m_3/m_2' & & \\
            & 1 & & \\
            & & 1 & -m_1/m_2' \\
            & & & 1
        \end{psmallmatrix}
        g) \overline{\e(m_2' x_2)} \, dx_2 \, d^2 u \\
        &= \int E(\chi,
        \begin{psmallmatrix}
            1 & & u_1 & x_2 - \frac{m_1}{m_2'}u_1 - \frac{m_3}{m_2'}u_3\\
            & 1 & & u_3 \\
            & & 1 & \\
            & & & 1
        \end{psmallmatrix}
        g) \overline{\e(m_2' x_2)} \, dx_2 \, d^2 u \\
        &= \int E(\chi,
        \begin{psmallmatrix}
            1 & & u_1 & u_2 \\
            & 1 & & u_3 \\
            & & 1 & \\
            & & & 1
        \end{psmallmatrix}
        g) \overline{\e(m_1 u_1 + m_2' u_2 + m_3 u_3)} \, d^3u \\
        &= W_{N'}(\chi, m_1, m_2', m_3; g)
    \end{split}
\end{equation}

\vspace{1cm}

\section{Alternative expansions for \texorpdfstring{$F_{(21^2)}$}{F(211)}}
\label{app:SL4-F211-alternative-expansion}
Instead of expanding $F_{(21^2)}$ such that it is a sum of maximally degenerate Whittaker vectors charged on $\alpha_2$ in the minimal representation, we will now show that it is possible to let the maximally degenerate Whittaker vectors be charged only on $\alpha_1$ or $\alpha_3$ instead. 

The expansion made in the proof of theorem \ref{thm:SL4-min-rep} resembles more the proof in theorem \ref{thm:SL4-ntm-rep} than those presented here and was thus suited better as an introduction before the latter theorem. On the other hand the expansion carried out here is more easily generalised to $SL(n)$.

Let 
\begin{equation}
    l_1 = 
    \begin{psmallmatrix}
        1 & & & \\
        & & & 1 \\
        & & 1 & -m_2/m_1' \\
        & -1 & & m_3/m_1'
    \end{psmallmatrix}  
    \qquad
    l_2 =
    \begin{psmallmatrix}
        1 & & & \\
        & 1 & & \\
        & & 1 & \\
        & & -m_5/m_4' & 1
    \end{psmallmatrix}
    \qquad
    w =
    \begin{psmallmatrix}
        1 & & & \\
        & 1 & & \\
        & & & 1 \\
        & & -1 & 
    \end{psmallmatrix}
\end{equation}
where we recall that when summing over such $l_1$ and $l_2$ we may do a rescaling of the charges and consider them independent of $m_1'$ and $m_4'$.

Then,
\begin{equation}
    \begin{split}
        \MoveEqLeft
        F_{(21^2)}(\chi, m_1'; g) \\
        &= \sum_{m_2, m_3} \int E(\chi,
        \begin{psmallmatrix}
            1 & x_3 & x_2 & x_1 \\
            & 1 & & \\
            & & 1 & \\
            & & & 1
        \end{psmallmatrix} g) \overline{\e(m_1' x_1 + m_2 x_2 + m_3 x_3)} \, d^3x \\
        &=
        \sum_{l_1} \int E(\chi,
        \begin{psmallmatrix}
            1 & x_1 & x_2 & x_3 \\
            & 1 & & \\
            & & 1 & \\
            & & & 1
        \end{psmallmatrix} 
        l_1 g) \overline{\e(m_1' x_1)} \, d^3 x \\
        &= \sum_{\substack{l_1 \\ m_4, m_5}} \int E(\chi,
        \begin{psmallmatrix}
            1 & x_1 & x_2 & x_3 \\
            & 1 & x_4 & x_5 \\
            & & 1 & \\
            & & & 1
        \end{psmallmatrix} 
        l_1 g) \overline{\e(m_1' x_1 + m_4 x_4 + m_5 x_5)} \, d^5x \\
        &= \sum_{\substack{l_1, l_2 \\ m_4'}} \int E(\chi,
        \begin{psmallmatrix}
            1 & x_1 & x_2 & x_3 \\
            & 1 & x_4 & x_5 \\
            & & 1 & \\
            & & & 1
        \end{psmallmatrix}
        l_2 l_1 g) \overline{\e(m_1' x_1 + m_4' x_4)} \, d^5x +{} \\
        & \quad + \sum_{\substack{l_1 \\ m_5}} \int E(\chi,
        \begin{psmallmatrix}
            1 & x_1 & x_2 & x_3 \\
            & 1 & x_4 & x_5 \\
            & & 1 & \\
            & & & 1
        \end{psmallmatrix}
        w l_1 g) \overline{\e(m_1' x_1 + m_5 x_4)} \, d^5x \\
        &= \sum_{\substack{l_1, l_2 \\ m_4', m_6}} \int E(\chi,
        \begin{psmallmatrix}
            1 & x_1 & x_2 & x_3 \\
            & 1 & x_4 & x_5 \\
            & & 1 & x_6 \\
            & & & 1
        \end{psmallmatrix}
        l_2 l_1 g) \overline{\e(m_1' x_1 + m_4' x_4 + m_6 x_6)} \, d^6x +{} \\
        & \quad + \sum_{\substack{l_1 \\ m_5, m_6}} \int E(\chi,
        \begin{psmallmatrix}
            1 & x_1 & x_2 & x_3 \\
            & 1 & x_4 & x_5 \\
            & & 1 & x_6 \\
            & & & 1
        \end{psmallmatrix}
        w l_1 g) \overline{\e(m_1' x_1 + m_5 x_4 + m_6 x_6)} \, d^6x \\
    \end{split}
\end{equation}

In the minimal representation only the maximally degenerate Whittaker vector of the last term survives. Hence,
\begin{equation}
    F_{(21^2)}(\chi_\text{min}, m_1'; g) = \sum_{a, b} W_N(\chi_\text{min}, m_1', 0, 0;
    \begin{psmallmatrix}
        1 & & & \\
        & & & 1 \\
        & -1 & & a \\
        & & -1 & b
    \end{psmallmatrix} g)
\end{equation}
using table \ref{tab:SL4-Whittaker-as-orbits}.

A similar expansion can be done for $\alpha_3$ yielding
\begin{equation}
    F_{(21^2)}(\chi_\text{min}, m_6'; g) = \sum_{a, b} W_N(\chi_\text{min}, 0, 0, m_6';
    \begin{psmallmatrix}
        & -1 & & \\
        & & -1 & \\
        1 & a & b & \\
        & & & 1
    \end{psmallmatrix} g)
\end{equation}

\pagebreak

{\small 
\bibliography{Whittakerbib}
\bibliographystyle{utphys}
}

\end{document}